\numberwithin{equation}{section} 
\newtheorem{theorem}{\bf Theorem}[section]
\newtheorem{example}{\bf Example}[section]
\newtheorem{remark}{\bf Remark}[section]
\newtheorem{lemma}{\bf Lemma}[section]
\newcommand{\bphi}{\mbox{\boldmath $\phi$}}
\newcommand{\bF}{{\bf {F}}}
 \newcommand{\norm}[1]{\left\lVert #1\right\rVert}
\newcommand{\tnorm}[1]{\ensuremath{\left| \! \left| \! \left|} #1 \ensuremath{\right| \! \right| \! \right|}}
\newsavebox{\savepar}		 
\begin{document}
\title{Global Stabilization of 2D Forced Viscous Burgers' Equation Around Nonconstant Steady State Solution by Nonlinear Neumann Boundary Feedback Control:Theory and Finite Element Analysis}
\author{ Sudeep Kundu\footnote{
		Institute of Mathematics and Scientific Computing, University of Graz,
		Heinrichstr. 36, A-8010 Graz, Austria,
		Email:sudeep.kundu@uni-graz.at}\quad 
and
	Amiya Kumar Pani\footnote{
		Department of Mathematics, 	IIT Bombay, 
		Powai, Mumbai-400076, India,
		Email:{akp@math.iitb.ac.in}}
	.}
\maketitle
\abstract{Global stabilization of viscous Burgers' equation around constant steady state solution has been discussed in the literature. The main objective of this paper is to show global stabilization results for the 2D forced viscous Burgers' equation around a nonconstant steady state solution using nonlinear Neumann boundary feedback control law, under some smallness condition on that steady state solution.
On discretizing in space using $C^0$ piecewise linear elements keeping time variable
continuous, a semidiscrete scheme is obtained. Moreover, global stabilization results for the semidiscrete solution and optimal error estimates for the state variable in $L^\infty(L^2)$ and $L^\infty(H^1)$-norms  are derived. Further, optimal convergence result is established for the boundary feedback control law. All our results in this paper preserve exponential stabilization property. 
Finally, some numerical experiments are documented to confirm our theoretical findings.}

Keywords: Forced viscous Burgers' equation, boundary feedback control, stabilization, finite element method, error estimate, numerical experiments\\
AMS subject classification: 35B37, 65M60, 65M15, 93B52, 93D15
\section{Introduction}
Consider the following  two-dimensional forced viscous Burgers' equation 
with Neumann boundary control
: seek $u=u(x,t),$ $t>0$ which satisfies
\begin{align}
&u_t-\nu\Delta u+u(\nabla u\cdot {\bf{1}})=f^\infty\qquad\text{in}\quad (x,t)\in \Omega\times(0,\infty)\label{feq1.1},\\
&\frac{\partial u}{\partial n}(x,t)=v_2(x,t)\qquad \text{on} \quad(x,t)\in \partial \Omega \times (0,\infty)\label{feq1.2},\\
&u(x,0)=u_0(x)\qquad x\in \Omega\label{feq1.3},
\end{align}
where $\Omega$ is a bounded subset in $\mathbb{R}^2$ with smooth boundary, $\nu>0$ is a constant, $v_2$ is a scalar control input, ${\bf{1}}=(1,1)$, $\nu\Delta u$ is the diffusive term, $u(\nabla u\cdot {\bf{1}})=u\sum_{i=1}^{2}u_{x_i}$ is the nonlinear convective term, forcing function  $f^\infty=f^\infty(x)$ is independent of the time and 
$u_0$ is a given function. When $\nu=0$, it is called inviscid Burgers' equation, which is studied in nonlinear wave propagation.\\
For one dimensional Burgers' equation, local stabilization results  are documented in \cite{bk}, \cite {bk1}
for distributed and Dirichlet boundary control  and in \cite {bgs}  for Neumann boundary control  with small initial
 data. For more details, see \cite {ik}, 
 \cite{iy} and \cite{lmt}.
Using linear feedback control law for the linearized part as in Navier-Stokes equations  \cite{Raymond06},  local stabilizability results can be proved for the two dimensional Burgers' equation. In \cite{raymond2010}, the authors have shown local stabilization results for the two dimensional Burgers' equation directly through a nonlinear feedback control law and several numerical experiments are also reported in their article conforming their theoretical results. 
  Subsequently
 Buchot {\it {et al.}} \cite{raymond2015} have derived local stabilization results in the case of partial information for the two dimensional Burgers' type equation. \\
Based on Lyapunov type functional, global stabilization result around constant steady state solution for one dimensional Burgers' equation is derived in \cite{krstic1} and \cite {balogh}
for both Dirichlet and Neumann boundary control laws. For more detailed references, we 
refer to \cite{liu}, \cite{Smaoui} and \cite{smaoui1}. In \cite{camphouse04}, authors implement Dirichlet boundary feedback control law on the obstacles 
for two dimensional Burgers' equation by solving both Riccati equation and Chandrasekhar equations.
 In \cite{skakp1}, it is shown that using a nonlinear Neumann boundary feedback 
 control laws, solution of 1D Burgers' equation converges exponentially to its constant steady state solution in $L^\infty(L^2)$, $L^\infty(L^\infty)$, $L^\infty(H^1)$ and $L^\infty(H^2)$ norms. Then, an application of finite element method in 
 spatial direction yields a semidiscrete system and global stabilization results are proved for the semidiscrete approximation. Finally, optimal error
 estimates for the state variable and superconvergence result for the control laws are established. This analysis is then extended to 2D
 Burgers' equation in \cite{skakp2}, and stabilization results depicting convergence of the solution to its constant steady state solution are derived. Moreover, convergence  result for the nonlinear feedback 
 control law is also documented.\\
  But when the steady state solution is non constant in the case of forced viscous Burgers' equation, it is not known whether global stabilization results still holds or not. Also, when applying finite element method, no result is available in the literature on rate of convergence. Hence, in this paper, an attempt has been made to fill this gap.\\
Now the corresponding equilibrium or steady state problem becomes: find $u^\infty$ as a solution of
\begin{align}
-\nu \Delta u^\infty+u^\infty(\nabla u^\infty\cdot {\bf{1}})&=f^\infty \qquad\text{in} \quad \Omega \label{feq1.5},\\
\frac{\partial u^\infty}{\partial n}&=0 \quad \text{on} \quad \partial \Omega \label{feq1.6}.
\end{align}
To achieve $$\lim_{t\to\infty} u(x,t)=u^\infty\quad \text{a. e.} ~ x\in\Omega,$$ it is enough to consider $\lim_{t\to\infty}w=0,$ where $w=u-u^\infty$ and $w$ satisfies
\begin{align}
&w_t-\nu \Delta w+u^\infty(\nabla w\cdot {\bf{1}})+w(\nabla u^\infty\cdot {\bf{1}})+w(\nabla w\cdot {\bf{1}})=0 \qquad\text{in}\quad (x,t)\in \Omega\times(0,\infty)\label{feq1.7},\\
&\frac{\partial w}{\partial n}(.,t)=v_2(x,t),\quad \text{on} \quad \partial \Omega\times(0,\infty)\label{eqn1.8},\\
&w(0)=u_0-u^\infty=w_0(\text{say})\quad\text{in}\quad\Omega \label{feq1.9}.
\end{align}
The motivation  to choose Neumann boundary control comes from the physical situation.
For instance, in thermal problem, one can not actuate the temperature $w$, but the heat flux $\frac{\partial w}{\partial n}$. 
Here, the control variable $v_2$ is to be chosen as a function of $w$ appropriately. 
%
In this article, we first prove global stabilization results in $L^\infty(L^2)$, $L^\infty(H^1)$- norms for the 
problem \eqref{feq1.7}-\eqref{feq1.9} with control law \eqref{feqx1}. Then, $C^0$-
conforming finite element method is applied to discretize the spatial variable, keeping
time variable continuous and global stabilization results are proved for the 
semidiscrete solution. Further, optimal error estimates are derived for the 
state variable and superconvergence results are obtained for the nonlinear Neumann boundary feedback control law.\\
The rest of the paper is organized as follows. Section $2$ contains notations, preliminary results and properties of the steady state solution. In Section $3,$ we focus on global stabilization results using nonlinear feedback control law. Section $4$ deals with finite element approximation and global stabilization results for the semidiscrete system. Further, optimal error estimates are obtained for the state variable and convergence result is derived for the feedback control law. Finally, Section $4$ concludes with some numerical experiments.
\section{Notations and properties of steady state solution}
In this section, we discuss some properties for the steady state problem \eqref{feq1.5}-\eqref{feq1.6} along with some preliminary results.\\
 Throughout this paper, we use
standard Sobolev space $H^m(\Omega)=W^{m,2}(\Omega)$ with norm $\norm{\cdot}_m,$ and seminorm $|\cdot|_m$. For $m=0,$ it corresponds to the usual $L^2$ norm and is denoted by $\norm{\cdot}$. Moreover, $(\cdot,\cdot)$ and $\langle\cdot,\cdot\rangle$ denote the innerproducts in $L^2(\Omega)$ and $L^2(\partial\Omega)$ respectively. Also we use the Sobolev space $H^s(\partial\Omega)$ of order $s$.
The space $L^p((0,T);X)$  $1\leq p\leq\infty,$ consists of all strongly measurable functions $v:[0,T] \rightarrow X $ with norm
$$\norm{v}_{L^p((0,T);X)}:=\left(\int_{0}^{T}\norm{v(t)}^p_X dt\right)^\frac{1}{p}<\infty \quad \text{for} \quad 1\leq p<\infty,$$ and 
$$\norm{v}_{L^\infty((0,T);X)}:=\operatorname*{ess\,sup}\limits_{0\leq t\leq T}\norm{v(t)}_X<\infty.$$
When there is no confusion we denote $L^p((0,T);X)$ by $L^p(X)$.
For a trilinear form $\Big(v\big(\nabla w\cdot {\bf{1}}\big),\phi\Big)$, denote by $B\big(v;w,\phi\big):=\Big(v\big(\nabla w\cdot {\bf{1}}\big),\phi\Big)$.\\
Now we present a few well known theorems and inequalities which are crucial for our analysis.
\begin{theorem}\label{fp}(\cite{kesavan})
	{\bf (Brouwer's fixed point theorem)}:  Let $H$ be a finite dimensional Hilbert space with 
	inner product $(\cdot,\cdot)$ and norm $\norm{.}.$ Let $\bF:H\rightarrow H$ be a continuous function. If there is a 
	real number  $R>0$ such that  $(\bF(z),z)>0$ $\forall~ z$ with $\norm{z}=R,$ then there exists $z^*\in H$ such that $\norm{z^*}\leq R$ and 
	$\bF(z^*)=0.$
\end{theorem}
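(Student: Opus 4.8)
The plan is to obtain this statement as a direct consequence of the classical Brouwer fixed point theorem---namely that every continuous self-map of a closed ball in a finite dimensional space has a fixed point---via an argument by contradiction. First I would assume, contrary to the conclusion, that $\bF$ vanishes nowhere on the closed ball $\bar{B}_R=\{z\in H:\norm{z}\leq R\}$, so that $\bF(z)\neq 0$ for every $z\in\bar{B}_R$ and hence $\norm{\bF(z)}>0$ throughout. Under this assumption the map
$$G(z):=-R\,\frac{\bF(z)}{\norm{\bF(z)}}$$
is well defined and continuous on $\bar{B}_R$, and since $\norm{G(z)}=R$ for every $z$, it maps $\bar{B}_R$ into itself (in fact onto its boundary sphere).

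Next I would apply Brouwer's theorem to $G$ to produce a fixed point $z^*=G(z^*)$, which by construction satisfies $\norm{z^*}=R$. The decisive step is then the pairing of $\bF(z^*)$ with $z^*$: using $z^*=-R\,\bF(z^*)/\norm{\bF(z^*)}$ one computes $(\bF(z^*),z^*)=-R\,\norm{\bF(z^*)}<0$. This directly contradicts the hypothesis that $(\bF(z),z)>0$ on the sphere $\norm{z}=R$, evaluated at the point $z=z^*$, which lies on that sphere. The assumption must therefore be false, and there exists $z^*\in\bar{B}_R$, i.e.\ with $\norm{z^*}\leq R$, for which $\bF(z^*)=0$.

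The single nontrivial ingredient is the classical Brouwer fixed point theorem itself; everything else is an elementary normalization, so I do not expect a serious obstacle here (indeed the statement is quoted from \cite{kesavan}). The main point to handle with care is the construction of the auxiliary map $G$: one must verify both that it is well defined---which is exactly where the contradiction hypothesis $\bF\neq 0$ is used---and that it genuinely maps the closed ball into itself, so that Brouwer's theorem is applicable. A secondary subtlety is that the final contradiction can only be drawn on the sphere $\norm{z}=R$, which is precisely where the sign condition is assumed; this is why the normalization is chosen so that $G$ has norm exactly $R$, forcing any fixed point to land on that sphere.
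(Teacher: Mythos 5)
Your argument is correct and is the standard proof of this variational corollary of Brouwer's theorem: assuming $\bF$ has no zero on $\bar{B}_R$, the normalized map $G(z)=-R\,\bF(z)/\norm{\bF(z)}$ is a continuous self-map of the closed ball, its Brouwer fixed point $z^*$ necessarily lies on the sphere $\norm{z^*}=R$, and the computation $(\bF(z^*),z^*)=-R\norm{\bF(z^*)}<0$ contradicts the sign hypothesis there. Note that the paper does not prove this statement at all --- it is quoted with a citation to Kesavan --- so there is nothing to compare against; your proof coincides with the classical one given in such references, and the only hypotheses you actually use (finite dimensionality for compactness of the ball, continuity of $\bF$, and the strict sign condition on the sphere) are exactly those stated.
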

The following trace embedding result holds for 2D.\\
{\bf{Boundary Trace Embedding Theorem} (page $164,$ \cite{adams2003}): }
There exists a bounded linear map
$$T:	H^1(\Omega)\hookrightarrow L^q(\partial \Omega) \quad \text{for}\quad 2\leq q<\infty $$
such that
\begin{equation}\label{1.30}
\norm{Ty}_{L^q(\partial \Omega)}\leq C\norm{y}_{H^1(\Omega)},
\end{equation}
for each $y\in H^1(\Omega)$.
Below, we recall  the following inequalities for our subsequent use\\
{\bf{Friedrichs's inequality}:} (See \cite{skakp2}) For $y\in H^1(\Omega),$ there holds
\begin{equation}\label{feq1}
\norm{y}^2\leq C_F\Big(\norm{\nabla y}^2+\norm{y}^2_{L^2(\partial\Omega)}\Big),
\end{equation}
where $C_F>0$ is Friedrichs's  constant. More explicitly
\begin{align*}
	\int_{\Omega} y^2dx\leq \sup_{x\in\partial\Omega}|x|^2\int_{\Omega} |\nabla y|^2 dx+\sup_{x\in\partial\Omega}|x|\int_{\partial\Omega}y^2\; d\Gamma.
\end{align*}
Hence the Friedrichs's constant can be taken as $C_F=\max\{\sup_{x\in\partial\Omega}|x|^2,\sup_{x\in\partial\Omega}|x|\}$.\\
{\bf{Gagliardo-Nireberg inequality}} (see \cite{nirenberg59}):
For $w\in H^1(\Omega)$
\begin{align*}
\norm{w}_{L^4}\leq C\Big(\norm{w}^{1/2}\norm{\nabla w}^{1/2}+\norm{w}\Big), \quad \text{and for $w\in H^2(\Omega)$}\quad
\norm{\nabla w}_{L^4}\leq C\Big(\norm{ w}^{1/4}\norm{\Delta w}^{3/4}+\norm{w}\Big).
\end{align*}
{\bf{Agmon's inequality}} (see \cite{agmons10}):
For $z\in H^2(\Omega),$ there holds
$$\norm{z}_{L^\infty}\leq C\Big(\norm{z}^\frac{1}{2}\norm{\Delta z}^\frac{1}{2}+\norm{z}\Big).$$	
\subsection{Some properties of the steady state problem}
Let $H^1/\mathbb{R}$ be the quotient space. Infact, $\norm{\nabla v}$ is a norm on this space. We now make the following assumption:\\
Assumption ${\bf (A1)}$
\begin{equation*}
\norm{f^\infty}_{-1}\leq \frac{3\nu^2}{16N},
\end{equation*} 
where $\norm{f^\infty}_{-1}=\sup_{v\in H^1/\mathbb{R}}\frac{(f^\infty,v)}{\norm{\nabla v}}$ and $N=\max(N_1,N_2),$ with
$N_1=\sup_{v,\hspace{0.1cm}z, \hspace{0.1cm}\phi \in H^1/\mathbb{R}}\frac{B\big(v;z,\phi\big)}{\norm{\nabla v}\norm{\nabla z}\norm{\nabla \phi}}$ and $N_2=\sup_{v\in H^1/\mathbb{R},\hspace{0.1cm}(z,\phi) \in (H^1(\Omega))^2}\frac{B\big(v;z,\phi\big)}{\norm{\nabla v}\tnorm{ z}\tnorm{ \phi}}$.\\
The assumption ${\bf (A1)}$ provides bound for the steady state problem \eqref{feq1.5}-\eqref{feq1.6}.
\begin{lemma}\label{key}
 Under the assumption  ${\bf (A1)}$, there exists a solution $u^\infty \in H^1/\mathbb{R}$ of \eqref{feq1.5}-\eqref{feq1.6} satisfying the following estimate:
\begin{equation}\label{fx2.1}
\norm{\nabla u^\infty}\leq \frac{\nu}{4N}.
\end{equation}
In addition, if $u^\infty\in H^2(\Omega)$ and $f^\infty\in L^2(\Omega)$, then $\norm{\Delta u^\infty}\leq C(\nu,\norm{f^\infty})$.
\end{lemma}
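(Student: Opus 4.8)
The plan is to prove existence through a Galerkin discretization combined with Brouwer's fixed point theorem (Theorem \ref{fp}), working in $V:=H^1/\mathbb{R}$ with the norm $\norm{\nabla\cdot}$, and then to extract the $H^2$ bound from the strong form by interpolation. The weak formulation I would use is: find $u^\infty\in V$ with
\[
\nu(\nabla u^\infty,\nabla\phi)+B\big(u^\infty;u^\infty,\phi\big)=(f^\infty,\phi)\qquad\forall\,\phi\in V,
\]
where the homogeneous Neumann condition \eqref{feq1.6} is the natural boundary condition because $\phi$ ranges over all of $H^1/\mathbb{R}$ rather than $H^1_0$.

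For a finite-dimensional subspace $V_m\subset V$, equipped with the inner product $(\nabla\cdot,\nabla\cdot)$, I define $\bF:V_m\to V_m$ by the Riesz representation
\[
(\nabla \bF(w),\nabla\phi)=\nu(\nabla w,\nabla\phi)+B\big(w;w,\phi\big)-(f^\infty,\phi)\qquad\forall\,\phi\in V_m;
\]
$\bF$ is continuous since the nonlinear term is polynomial in the coordinates of $w$. Testing with $w$ and using the definition of $N_1$ (so that $|B(w;w,w)|\le N\norm{\nabla w}^3$) together with that of $\norm{f^\infty}_{-1}$, I obtain
\[
(\nabla\bF(w),\nabla w)\ge \nu\norm{\nabla w}^2-N\norm{\nabla w}^3-\norm{f^\infty}_{-1}\norm{\nabla w}.
\]
Evaluating this on the sphere $\norm{\nabla w}=R:=\nu/(4N)$ gives $(\nabla\bF(w),\nabla w)\ge R\big(3\nu^2/(16N)-\norm{f^\infty}_{-1}\big)$, which is nonnegative precisely by Assumption \textbf{(A1)}, and strictly positive under the strict form of \textbf{(A1)} (the borderline equality case is recovered by a routine limiting argument on $f^\infty$). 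Brouwer's theorem then produces $u^\infty_m\in V_m$ with $\bF(u^\infty_m)=0$ and $\norm{\nabla u^\infty_m}\le \nu/(4N)$.

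Since the Galerkin solutions are bounded by $\nu/(4N)$ independently of $m$, I would pass to a subsequence with $u^\infty_m\rightharpoonup u^\infty$ weakly in $V$, and use the compact embedding $H^1(\Omega)\hookrightarrow\hookrightarrow L^4(\Omega)$ (valid in two dimensions) to get strong $L^4$ convergence. The step I expect to be the main obstacle is passing to the limit in the trilinear term $B(u^\infty_m;u^\infty_m,\phi)=\int_\Omega u^\infty_m(\nabla u^\infty_m\cdot\mathbf{1})\phi\,dx$: for $\phi$ in a dense set of smooth functions the factor $u^\infty_m\phi$ converges strongly in $L^2$ while $\nabla u^\infty_m\cdot\mathbf{1}$ converges only weakly in $L^2$, and the product of a strongly and a weakly $L^2$-convergent sequence passes to the limit; a density argument then extends the weak identity to all $\phi\in V$. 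Weak lower semicontinuity of $\norm{\nabla\cdot}$ preserves the bound, yielding \eqref{fx2.1}.

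For the final assertion I assume $u^\infty\in H^2(\Omega)$ and $f^\infty\in L^2(\Omega)$ and fix the zero-mean representative, so that Poincar\'e's inequality gives $\norm{u^\infty}\le C\norm{\nabla u^\infty}\le C\nu/(4N)$. Rewriting the strong form as $-\nu\Delta u^\infty=f^\infty-u^\infty(\nabla u^\infty\cdot\mathbf{1})$ and taking $L^2$ norms,
\[
\nu\norm{\Delta u^\infty}\le \norm{f^\infty}+\norm{u^\infty}_{L^4}\norm{\nabla u^\infty}_{L^4}.
\]
The Gagliardo--Nirenberg inequalities bound $\norm{u^\infty}_{L^4}$ by the already controlled quantities $\norm{u^\infty}$ and $\norm{\nabla u^\infty}$, while $\norm{\nabla u^\infty}_{L^4}\le C\big(\norm{u^\infty}^{1/4}\norm{\Delta u^\infty}^{3/4}+\norm{u^\infty}\big)$ involves $\norm{\Delta u^\infty}$ only to the power $3/4$. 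A Young inequality then absorbs $\tfrac{\nu}{2}\norm{\Delta u^\infty}$ into the left-hand side, leaving $\norm{\Delta u^\infty}\le C(\nu,\norm{f^\infty})$, as claimed.
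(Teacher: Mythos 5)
Your proposal is correct and follows essentially the same route as the paper: a Galerkin approximation in $H^1/\mathbb{R}$, Brouwer's fixed point theorem applied on the ball of radius $\nu/(4N)$ (which is exactly where Assumption ${\bf (A1)}$ enters), extraction of $u^\infty$ as a weak limit, and the Gagliardo--Nirenberg plus Young absorption argument for $\norm{\Delta u^\infty}$. The only minor differences are technical: you pass to the limit in the trilinear term by pairing strong $L^4$ convergence of $u^\infty_m$ against weak $L^2$ convergence of $\nabla u^\infty_m$, whereas the paper integrates by parts and invokes compactness of the trace, and you are somewhat more careful than the paper about the borderline (non-strict) case of ${\bf (A1)}$ when verifying the sign condition $(\bF(z),z)>0$ on the sphere.
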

\begin{proof}
Multiply \eqref{feq1.5} by $u^\infty$ to obtain
\begin{align*}
\nu\norm{\nabla u^\infty}^2&=(f^\infty,u^\infty)-B\big(u^\infty; u^\infty,u^\infty\big)\\
&\leq \Big(\norm{f^\infty}_{-1}+N\norm{\nabla u^\infty}^2\Big)\norm{\nabla u^\infty}.
\end{align*}
Therefore 
\begin{align*}
\norm{\nabla u^\infty}\Big(\nu-N\norm{\nabla u^\infty}\Big)\leq \norm{f^\infty}_{-1}.
\end{align*}
Using assumption ${\bf (A1)}$, it follows that
\begin{align*}
N\norm{\nabla u^\infty}^2-\nu \norm{\nabla u^\infty}+\frac{3\nu^2}{16N}\geq 0,
\end{align*}
that is,
\begin{align*}
(\norm{\nabla u^\infty}-\frac{3\nu}{4N})(\norm{\nabla u^\infty}-\frac{\nu}{4N})\geq 0.
\end{align*}
Hence, when both the factor is negative, we obtain $\norm{\nabla u^\infty}\leq\frac{3\nu}{4N}$ and $\norm{\nabla u^\infty}\leq\frac{\nu}{4N}$.\\
Multiply \eqref{feq1.5} by $-\Delta u^\infty$ to arrive  using the bounds of $\norm{u^\infty}$ and $\norm{\nabla u^\infty}$ at
\begin{align}
\nu\norm{\Delta u^\infty}^2&=(f^\infty,-\Delta u^\infty)+B\big(u^\infty;u^\infty,\Delta u^\infty\big)\notag\\
&\leq \frac{\nu}{4}\norm{\Delta u^\infty}^2+\frac{1}{\nu}\norm{f^\infty}^2+\norm{u^\infty}_{L^4}\norm{\nabla u^\infty}_{L^4}\norm{\Delta u^\infty}\notag\\
&\leq \frac{\nu}{2}\norm{\Delta u^\infty}^2+\frac{1}{\nu}\norm{f^\infty}^2+C_{GN}\Big(\norm{u^\infty}^6\norm{\nabla u^\infty}^4+\norm{u^\infty}^3\norm{\nabla u^\infty}+\norm{u^\infty}^{10}+\norm{u^\infty}^4\Big),
\end{align}
where the constant $C_{GN}$ is appeared in the Gagliardo Nirenberg inequality.\\
Hence when $f^\infty\in L^2(\Omega)$, $\norm{\Delta u^\infty}\leq C\Big(\nu,\norm{f^\infty}\Big)$.\\
Now below, we provide the proof for the existence of a  weak solution $u^\infty$ which satisfy \eqref{fx2.1}.
Corresponding weak formulation for the steady state solution \eqref{feq1.5}-\eqref{feq1.6} is to seek $u^\infty\in H^1(\Omega)/\mathbb{R}$ such that
\begin{equation}\label{xe1}
\nu(\nabla u^\infty,\nabla v)+B\big(u^\infty; u^\infty,v\big)=(f^\infty,v) \quad \forall v\in H^1(\Omega)/\mathbb{R}.
\end{equation}
	Let $\{\phi_i\}_{i=1}^{\infty}$ be an orthogonal basis for $H^1(\Omega)/\mathbb{R}$.
Denote $V^m=span\{\phi_1,\phi_2,..., \phi_m\}$ and finite linear combination of $\phi_i$ are dense in $H^1(\Omega)/\mathbb{R}$.
We look for $u^\infty_m\in V^m$ such that
\begin{equation}\label{xe2}
\nu(\nabla u^\infty_m,\nabla v_m)+B\big(u^\infty_m;u^\infty_m,v_m\big)=(f^\infty,v_m) \quad \forall v_m\in V^m.
\end{equation}
It is sufficient to verify \eqref{xe2} for $v_m=\phi_i$, $1\leq i\leq m$.
Let $\alpha=(\alpha_i)_{i=1}^{m}\in \mathbb{R}^m$. For such $\alpha$, we relate 
a unique element $v_m\in V^m$ by $v_m=\sum_{i=1}^{m}\alpha_i\phi_i$. This establishes a linear bijection between $\mathbb{R}^m$ and $V^m$. Since 
$\{\phi_i\}$ are orthogonal in $H^1(\Omega)/\mathbb{R}$, 
$\norm{v}^2_1=\sum_{i=1}^{m}
|\alpha_i|^2=|\alpha|^2$.
Now we define $F:\mathbb{R}^m \mapsto \mathbb{R}^m$ by
 \begin{align*}
(F(\alpha))_{i}=\nu(\nabla v_m,\nabla\phi_{i})+B\big(v_m; v_m,\phi_i\big)-(f^\infty,\phi_i).
\end{align*}
	Hence \eqref{xe2} has a solution if there exists a $\alpha$ such that
$F(\alpha)=0$.
It is valid that  
\begin{align*}
(F(\alpha),\alpha)&=\sum_{i=1}^{m}(F(\alpha))_{i}\alpha_i\\& =\nu \norm{\nabla v_m}^2-B\big(v_m; v_m,v_m\big)-(f^\infty,v)\\
&\geq \norm{\nabla v_m}\Big(\nu \norm{\nabla v_m}-N\norm{\nabla v_m}^2-\norm{f^\infty}_{-1}\Big).
\end{align*}
Using the bound for the steady state solution $\norm{\nabla v_m}\leq \frac{\nu}{4N}$, we get
\begin{align*}
(F(\alpha),\alpha)\geq \norm{\nabla v_m}\Big(\frac{3\nu}{4}\norm{\nabla v_m}-\norm{f^\infty}_{-1}\Big)
\end{align*}
Hence if $|\alpha|=R$ is chosen large enough such that $\frac{3\nu}{4} R>\norm{f^\infty}_{-1}$, we have $$\Big(F(\alpha),\alpha\Big)> 0\quad \forall\quad |\alpha|=R.$$ From the condition $\norm{f^\infty}_{-1}\leq \frac{3\nu^2}{16N}$, infact we can take $R=\frac{\nu}{4N}$.
Further  by \eqref{xe2}, $\norm{u^\infty_m}_1\leq R$ and $R$ is independent of $m$.
Since $\{u^\infty_m\}$ is uniformly bounded in $ H^1(\Omega)/\mathbb{R}$, so there exists a convergent subsequence still denoted by $\{u^\infty_m\}$ such that $u^\infty_m \rightharpoonup u^\infty$ in $ H^1(\Omega)/\mathbb{R}$. Since $ H^1(\Omega)/\mathbb{R}$ is compactly embedded in $L^2$, then there exists a convergent subsequence  still denoted by $\{u^\infty_m\}$ such that $u^\infty_m \to u^\infty$ in $L^2$.\\
Hence by Brouwer's fixed point theorem \ref{fp}, there exists a $\alpha^*$ such that $\alpha^*\leq R$ and $F(\alpha^*)=0$.\\
Let $m>i$. From \eqref{xe2}, we obtain
\begin{equation*}
\nu(\nabla u^\infty_{m},\nabla \phi_{i})+(u^\infty_m(\nabla u^\infty_m\cdot {\bf{1}}),\phi_{i})=(f^\infty,\phi_{i}).
\end{equation*}
Clearly 
\begin{equation*}
\nu(\nabla u^\infty_{m},\nabla \phi_{i})\to \nu(\nabla u^\infty,\nabla \phi_{i}) \quad\text{as}\quad m\to \infty.
\end{equation*}
For the nonlinear term, we can rewrite
\begin{align*}
\Big(u^\infty_m(\nabla u^\infty_m\cdot {\bf{1}}),\phi_{i}\Big)&-\Big(u^\infty (\nabla u^\infty\cdot {\bf{1}}),\phi_{i}\Big)\\
&=\Big((u^\infty_m-u^\infty)(\nabla u^\infty_m\cdot {\bf{1}}),\phi_{i}\Big)+\Big(u^\infty\big((\nabla u^\infty_m-\nabla u^\infty)\cdot {\bf{1}}\big),\phi_{i}\Big)\\&=\Big((u^\infty_m-u^\infty)(\nabla u^\infty_m\cdot {\bf{1}}),\phi_{i}\Big)-\Big((u^\infty_m-u^\infty)(\nabla u^\infty\cdot {\bf{1}}),\phi_{i}\Big)\\
&\qquad-\Big(u^\infty(\nabla \phi_{i}\cdot {\bf{1}}),(u^\infty_m-u^\infty)\Big)-\int_{\partial\Omega}u^\infty(u^\infty_m-u^\infty)(n\cdot {\bf{1}})\phi_{i}\;d\Gamma.
\end{align*}
Boundedness of $\norm{u^\infty_m}_{1}$ and $\norm{u^\infty_m}_{L^\infty}$,  strong convergence of $u^\infty_m \to u^\infty$ in $L^2$ norm and compact embedding of $H^1(\Omega)$ onto $H^s(\partial\Omega)$ with $s<1$ implies that right hand side goes to zero in the above inequality.
Hence when $i$ goes to infinity, we obtain 
\begin{equation}\label{xe3}
\nu(\nabla u^\infty,\nabla \phi_{i})+(u^\infty(\nabla u^\infty\cdot {\bf{1}}),\phi_{i})=(f^\infty,\phi_{i}) \quad \forall i.
\end{equation}
By density, \eqref{xe3} is true for all $v\in H^1(\Omega)/\mathbb{R}$.
\end{proof}
\section{Stabilization results}
Before proceeding, let us first construct the feedback control law $v_2$.
To obtain  $v_2$, we consider Lyapunov energy functional 
$V(t)=\frac{1}{2}\int_{\Omega}w(x,t)^2\; dx$.
Then
\begin{align*}
\frac{dV}{dt}&=\int_{\Omega}w\Big(\nu \Delta w-u^\infty(\nabla w\cdot {\bf{1}})-w(\nabla u^\infty\cdot {\bf{1}})-w(\nabla w\cdot {\bf{1}})\Big)\;dx\\
&=-\nu\norm{\nabla w}^2+\nu\int_{\partial\Omega}\frac{\partial w}{\partial n}w\;d\Gamma-\int_{\Omega}\Big(u^\infty(\nabla w\cdot {\bf{1}})+w(\nabla u^\infty\cdot {\bf{1}})\Big)w\; dx-\int_{\Omega}w(\nabla w\cdot {\bf{1}})w\; dx.
\end{align*}
Using the notation $B\big(\cdot;\cdot,\cdot\big)$ for trilinear term, we can bound the right hand term as 
	\begin{align*}
&\Big(B\big(u^\infty; w,w\big)+B\big(w; u^\infty,w\big)\Big)\notag\\
&=\frac{1}{2}B\big(w;u^\infty,w\big)+\frac{1}{2}\sum_{j=1}^{2}\int_{\partial\Omega}|u^\infty|w^2\cdot\nu_j d\Gamma\notag\\
&\leq \frac{1}{2}\sum_{j=1}^{2}\int_{\partial\Omega}|u^\infty|w^2\cdot\nu_j d\Gamma+\frac{1}{2}N\norm{\nabla u^\infty}\tnorm{ w}^2\notag\\
&\leq \int_{\partial\Omega} |u^\infty| w^2 d\Gamma+\frac{1}{2}N\norm{\nabla u^\infty}\Big(\norm{\nabla w}^2+\norm{w}^2_{L^2(\partial\Omega)}\Big)\notag\\
&\leq \int_{\partial\Omega} |u^\infty| w^2 d\Gamma+\frac{\nu}{8}\Big(\norm{\nabla w}^2+\norm{w}^2_{L^2(\partial\Omega)}\Big),
\end{align*}
and
\begin{align*}
B\big(w;w,w\big)\leq \frac{1}{3}\sum_{j=1}^{2}\int_{\partial\Omega}w^3\cdot\nu_j d\Gamma&\leq \frac{1}{3}\sqrt 2\int_{\partial\Omega}|w|^3 d\Gamma\notag\\
&\leq \frac{c_0}{2}\int_{\partial\Omega}w^2 d\Gamma+\frac{1}{9c_0}\int_{\partial\Omega}w^4 d\Gamma.
\end{align*}
Hence, we get
 \begin{align*}
\frac{dV}{dt}&\leq -\frac{7\nu}{8}\norm{\nabla w}^2+\int_{\partial\Omega}\Big(\nu\frac{\partial w}{\partial n}+|u^\infty| w+(\frac{c_0}{2}+\frac{\nu}{8})w+\frac{1}{9c_0}w^3\Big)w d\Gamma.
\end{align*}
Now, choose the Neumann boundary feedback control law as
\begin{align}\label{feqx1}
\frac{\partial w}{\partial n}=v_2(x,t)=-\frac{1}{\nu}\Big((c_0+\nu+2|u^\infty|)w+\frac{2}{9c_0}w^3\Big) \quad \text{on} \quad\partial\Omega,
\end{align}
to obtain
\begin{align*}
\frac{dV}{dt}&\leq -\frac{7\nu}{8}\norm{\nabla w}^2-\int_{\partial\Omega}\Big(\frac{c_0}{2}+\frac{7\nu}{8}+|u^\infty|\Big)w^2 d\Gamma-\frac{1}{9c_0}\int_{\partial\Omega}w^4 d\Gamma\\
&\leq -\frac{1}{C_F}\min\Big\{\frac{7\nu}{8},\frac{c_0}{2}+\frac{7\nu}{8}\Big\}\norm{w}^2\leq-C_{LYP}V,
\end{align*}
$C_{LYP}=\frac{2}{C_F}\min\Big\{\frac{7\nu}{8},\frac{c_0}{2}+\frac{7\nu}{8}\Big\}$.
Now, $w$ satisfies the weak formulation of \eqref{feq1.7}-\eqref{feq1.9} as
\begin{align}
(w_t,v)+&\nu(\nabla w,\nabla v)+B\big(u^\infty;w,v\big)+B\big(w;u^\infty,v\big)+B\big(w;w,v\big)\notag\\
&+\Big<(c_0+\nu+2|u^\infty|)w+\frac{2}{9c_0}w^3,v\Big>_{\partial\Omega}\; =0\quad\forall~v\in H^1(\Omega)\label{feq1.10}
\end{align}
with $w(0)=w_0,$ where $\langle v,w\rangle_{\partial\Omega}:=\int_{\partial \Omega}vw \;d\Gamma$. Throughout the paper $C=C(\norm{w_0}_3,\nu,f^\infty)$ is a generic positive constant independent of the discretizing parameter $h$.\\
Further we make the following assumption\\
Assumption ${\bf (A2)}$
\begin{itemize}
	\item Let $u^\infty\in H^2(\Omega)$, $f^\infty\in L^2(\Omega)$.
	\item Compatibility conditions at $t=0$ $\Big(\frac{\partial w_0}{\partial n}=v_2(x,0), \frac{\partial w_t}{\partial n}(x,0)=v_{2t}(x,0)\Big)$ are satisfied.
	\item There exists a unique weak solution $w$ of \eqref{feq1.10} satisfying the following regularity result
	\begin{align*}
	\norm{w(t)}^2_2+\norm{w_t(t)}^2_1+\int_{0}^{t}\norm{ w_t(s)}^2_2 ds\leq C.
	\end{align*}
\end{itemize}
Our main objective in this section is to establish global stabilization results for the state variable $w(t)$ of the continuous problem \eqref{feq1.10}. Throughout the paper, all the results hold  under the assumptions ${\bf (A1)}$ and ${\bf (A2)}$ with the same decay rate $\alpha$
\begin{equation}\label{decay}
0\leq \alpha\leq \frac{1}{2C_F}\min\Big\{\frac{\nu}{2},c_0+\frac{7\nu}{4}\Big\}.
\end{equation}
\begin{lemma}\label{flm1}
Assume that assumption ${\bf (A1)}$	is satisfied and $u^\infty$ is a steady state solution satisfying \eqref{fx2.1} of \eqref{feq1.5}-\eqref{feq1.6}.
Let $w_0\in L^2(\Omega)$. Then, there holds
\begin{align*}
\norm{w(t)}^2&+\delta e^{-2\alpha t}\int_{0}^{t}e^{2\alpha s}\Big(\norm{\nabla w(s)}^2+\norm{w(s)}^2_{L^2(\partial\Omega)}+\frac{2}{9c_0}\norm{w(s)}^4_{L^4(\partial\Omega)} \Big)\;ds\\
&+2e^{-2\alpha t}\int_{0}^{t}e^{2\alpha s}(\int_{\partial\Omega}|u^\infty|w(s)^2\; d\Gamma) \; ds\leq e^{-2\alpha t}\norm{w_0}^2,
\end{align*}
where $\delta=\min\{\Big(\frac{7\nu}{4}-2\alpha C_F\Big), \Big(c_0+\frac{7\nu}{4}-2\alpha C_F\Big)\}>0$, and $C_F>0$ is the constant in the Friedrichs's inequality \eqref{feq1}.
\end{lemma}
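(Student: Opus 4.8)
The plan is to run a weighted $L^2$ energy estimate: test the weak formulation \eqref{feq1.10} with $v=w$ and then introduce the integrating factor $e^{2\alpha t}$. Since $(w_t,w)=\tfrac12\frac{d}{dt}\norm{w}^2$, the choice $v=w$ turns \eqref{feq1.10} into the identity
\begin{align*}
\tfrac12\tfrac{d}{dt}\norm{w}^2+\nu\norm{\nabla w}^2+\Big(B\big(u^\infty;w,w\big)+B\big(w;u^\infty,w\big)\Big)+B\big(w;w,w\big)
+\int_{\partial\Omega}\big(c_0+\nu+2|u^\infty|\big)w^2\,d\Gamma+\frac{2}{9c_0}\int_{\partial\Omega}w^4\,d\Gamma=0.
\end{align*}
Because the statement only assumes $w_0\in L^2(\Omega)$, I would treat this as a formal computation to be justified by a Galerkin approximation (or by the solution regularity granted elsewhere), so that every quantity below is meaningful.

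The second step reuses verbatim the two trilinear bounds already obtained in the derivation of the feedback law \eqref{feqx1}. Integration by parts gives $B(u^\infty;w,w)+B(w;u^\infty,w)=\tfrac12\int_{\partial\Omega}u^\infty w^2(n\cdot\mathbf 1)\,d\Gamma+\tfrac12 B(w;u^\infty,w)$ and $B(w;w,w)=\tfrac13\int_{\partial\Omega}w^3(n\cdot\mathbf 1)\,d\Gamma$; bounding the interior term by $N\norm{\nabla u^\infty}\tnorm{w}^2$ and invoking the steady-state estimate $\norm{\nabla u^\infty}\le\nu/(4N)$ from Lemma \ref{key} (so that $\tfrac12 N\norm{\nabla u^\infty}\le\nu/8$), together with a Young inequality on the cubic boundary term, yields
\begin{align*}
-\big(B(u^\infty;w,w)+B(w;u^\infty,w)\big)&\le\int_{\partial\Omega}|u^\infty|w^2\,d\Gamma+\frac{\nu}{8}\big(\norm{\nabla w}^2+\norm{w}^2_{L^2(\partial\Omega)}\big),\\
-B(w;w,w)&\le\frac{c_0}{2}\int_{\partial\Omega}w^2\,d\Gamma+\frac{1}{9c_0}\int_{\partial\Omega}w^4\,d\Gamma.
\end{align*}
Substituting these into the identity and cancelling against the feedback boundary contributions $(c_0+\nu+2|u^\infty|)w^2$ and $\tfrac{2}{9c_0}w^4$ collapses everything to the dissipative differential inequality
\begin{align*}
\frac{d}{dt}\norm{w}^2\le-\frac{7\nu}{4}\norm{\nabla w}^2-\Big(c_0+\frac{7\nu}{4}\Big)\norm{w}^2_{L^2(\partial\Omega)}-2\int_{\partial\Omega}|u^\infty|w^2\,d\Gamma-\frac{2}{9c_0}\norm{w}^4_{L^4(\partial\Omega)}.
\end{align*}

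The final step produces the exponential weight. Writing $\frac{d}{dt}\big(e^{2\alpha t}\norm{w}^2\big)=e^{2\alpha t}\big(\frac{d}{dt}\norm{w}^2+2\alpha\norm{w}^2\big)$ and using Friedrichs's inequality \eqref{feq1} to absorb $2\alpha\norm{w}^2\le 2\alpha C_F\big(\norm{\nabla w}^2+\norm{w}^2_{L^2(\partial\Omega)}\big)$ into the gradient and boundary dissipation, the coefficients of $\norm{\nabla w}^2$ and $\norm{w}^2_{L^2(\partial\Omega)}$ become $-(\tfrac{7\nu}{4}-2\alpha C_F)$ and $-(c_0+\tfrac{7\nu}{4}-2\alpha C_F)$; the decay restriction \eqref{decay} on $\alpha$ is precisely what forces both to be negative, so both are dominated by $-\delta$ with $\delta>0$. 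This gives
\begin{align*}
\frac{d}{dt}\big(e^{2\alpha t}\norm{w}^2\big)\le-e^{2\alpha t}\Big[\delta\big(\norm{\nabla w}^2+\norm{w}^2_{L^2(\partial\Omega)}+\tfrac{2}{9c_0}\norm{w}^4_{L^4(\partial\Omega)}\big)+2\int_{\partial\Omega}|u^\infty|w^2\,d\Gamma\Big],
\end{align*}
where the nonnegative $L^4(\partial\Omega)$ and weighted $|u^\infty|$ terms are simply carried along as extra dissipation; integrating from $0$ to $t$ and multiplying through by $e^{-2\alpha t}$ produces the claimed estimate. I expect the only delicate bookkeeping to be the sign tracking in the trilinear/boundary cancellation and the verification that \eqref{decay} yields $\delta>0$; everything after that is a one-line Gr\"onwall-type integration.
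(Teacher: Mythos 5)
Your proposal is correct and follows essentially the same route as the paper: the paper tests \eqref{feq1.10} directly with $v=e^{2\alpha t}w$ rather than testing with $w$ and multiplying afterwards, but the resulting identity, the reuse of the two trilinear bounds from the derivation of \eqref{feqx1}, the Friedrichs absorption of $2\alpha\norm{w}^2$ under the restriction \eqref{decay}, and the final integration are all identical. The only (shared) cosmetic imprecision is that placing the $\tfrac{2}{9c_0}\norm{w}^4_{L^4(\partial\Omega)}$ term inside the factor $\delta$ implicitly uses $\delta\le 1$ or simply a relabeling of constants, which the paper does as well.
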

\begin{proof}
	Set $v=e^{2\alpha t}w$ in \eqref{feq1.10} to obtain
	\begin{align}
	\frac{d}{dt}\norm{e^{\alpha t}w}^2-&2\alpha \norm{e^{\alpha t}w}^2+2\nu \norm{e^{\alpha t}\nabla w}^2+2e^{2\alpha t}\int_{\partial \Omega}\Big((c_0+\nu+2|u^\infty|)w^2+\frac{2}{9c_0}w^4\Big)\; d\Gamma\notag\\
	&=-2e^{2\alpha t}\Big(B\big(u^\infty; w,w\big)+B\big(w; u^\infty,w\big)\Big)-2e^{2\alpha t}B\big(w;w,w\big)
	\label{feq1.11}.
	\end{align}
	For the first term on the right hand side of \eqref{feq1.11}, we use integration by parts for the first sub-term and then bound it as follows
	\begin{align}
	2&e^{2\alpha t}\Big(B\big(u^\infty; w,w\big)+B\big(w; u^\infty,w\big)\Big)\notag\\
	&\leq 2 e^{2\alpha t}\int_{\partial\Omega} |u^\infty| w^2 d\Gamma+\frac{\nu}{4}e^{2\alpha t}\Big(\norm{\nabla w}^2+\norm{w}^2_{L^2(\partial\Omega)}\Big)\label{feq1.12}.
	\end{align}
	Similarly, using the Young's inequality, the second term on the right hand side of \eqref{feq1.11} is bounded by
	\begin{align}
	2e^{2\alpha t}B\big(w;w,w\big)\leq \frac{2}{3}e^{2\alpha t}\sum_{j=1}^{2}\int_{\partial\Omega}w^3\cdot\nu_j d\Gamma
	&\leq c_0 e^{2\alpha t}\int_{\partial\Omega}w^2 d\Gamma+\frac{2}{9c_0}e^{2\alpha t}\int_{\partial\Omega}w^4 d\Gamma\label{feq1.13}.
	\end{align}
	Now, using the Friedrichs's inequality \eqref{feq1}, it follows that
	\begin{equation}\label{feq1.14}
	-2\alpha e^{2\alpha t}\norm{w}^2 \geq -2\alpha e^{2\alpha t}C_F\Big(\norm{\nabla w}^2+\norm{w}^2_{L^2(\partial\Omega)}\Big).
	\end{equation}
		Hence, from \eqref{feq1.11}, we arrive using \eqref{feq1.12}, \eqref{feq1.13} and \eqref{feq1.14} at
	\begin{align}\label{feq1.15}
	\frac{d}{dt}\norm{e^{\alpha t}w}^2+&(\frac{7\nu}{4}-2\alpha C_F)\norm{e^{\alpha t}\nabla w}^2+e^{2\alpha t}\Big(\big(c_0+\frac{7\nu}{4}-2\alpha C_F\big)\int_{\partial\Omega}w^2 d\Gamma+2\int_{\partial\Omega}|u^\infty|w^2 d\Gamma\notag\\
	&\qquad+\frac{2}{9c_0}\int_{\partial\Omega}w^4 d\Gamma\Big)\leq 0.
	\end{align}
Choose $\alpha$ as \eqref{decay}, so that the coefficients on the left hand side of \eqref{feq1.15} are non-negative.
	Integrate \eqref{feq1.15} from $0$ to $t,$ and then, multiply the resulting inequality by $e^{-2\alpha t}$ to obtain
	\begin{align*}
	\norm{w(t)}^2+&(\frac{7\nu}{4}-2\alpha C_F)e^{-2\alpha t}\int_{0}^{t}e^{2\alpha s}\norm{\nabla w(s)}^2 ds+e^{-2\alpha t}\int_{0}^{t}e^{2\alpha s}\Big(\big(c_0+\frac{7\nu}{4}-2\alpha C_F\big)\norm{w(s)}^2_{L^2(\partial\Omega)}\\
	&\qquad+2\int_{\partial\Omega}|u^\infty|w(s)^2\; d\Gamma+\frac{2}{9c_0}\norm{w(s)}^4_{L^4(\partial\Omega)}\Big) \;ds\leq e^{-2\alpha t}\norm{w_0}^2.
	\end{align*}
	This completes the proof.
\end{proof}
\begin{remark}\label{frm2.1}
	The above Lemma also holds for $\alpha=0,$ that is,
		\begin{equation}\label{feqn1.2}
	\norm{w(t)}^2+\frac{7\nu}{4}\int_{0}^{t}\norm{\nabla w(s)}^2 ds+\int_{0}^{t}\int_{\partial\Omega}\Big(\big(c_0+\frac{7\nu}{4} +2|u^\infty|\big)w(s)^2+\frac{2}{9c_0}w(s)^4\Big) \;d\Gamma\leq \norm{w_0}^2.
	\end{equation}
	Moreover,
	using the Friedrichs's inequality, it follows that
	\begin{equation*}
	e^{-2\alpha t}\int_{0}^{t}e^{2\alpha s}\norm{w(s)}^2 ds\leq Ce^{-2\alpha t}\norm{w_0}^2.
	\end{equation*}
\end{remark}
\begin{remark}
	If we define the before mentioned Neumann control on some part of the boundary denoted by $\Gamma_N$ where measure of $\Gamma_N$ is non zero with remaining part zero Dirichlet boundary condition, still the stabilization result holds.
	For instance, consider $\partial\Omega=\Gamma_D\cup \Gamma_N$ with $\Gamma_D\cap \Gamma_N=\phi$, where $\Gamma_D$ and $\Gamma_N$ are sufficiently smooth. 
	Hence, from \eqref{feq1.11}, we arrive at
	\begin{align}
	\frac{d}{dt}\norm{e^{\alpha t}w}^2-&2\alpha \norm{e^{\alpha t}w}^2+2\nu \norm{e^{\alpha t}\nabla w}^2+2e^{2\alpha t}\int_{\Gamma_N}\Big((c_0+\nu+2|u^\infty|)w^2+\frac{2}{9c_0}w^4\Big)\; d\Gamma\notag\\
	&=-2e^{2\alpha t}\Big(B\big(u^\infty; w,w\big)+B\big(w; u^\infty,w\big)\Big)-2e^{2\alpha t}B\big(w;w,w\big)\notag\\
	&\leq 2 e^{2\alpha t}\int_{\Gamma_N}|u^\infty| w^2 d\Gamma+c_0 e^{2\alpha t}\int_{\Gamma_N}w^2 d\Gamma+\frac{2}{9c_0}e^{2\alpha t}\int_{\Gamma_N}w^4 d\Gamma\notag\\
	& \quad+\frac{\nu}{4}e^{2\alpha t}\Big(\norm{\nabla w}^2+\norm{w}^2_{L^2(\Gamma_N)}\Big)
	\label{xfeq1.11}.
	\end{align} 
	Using the Friedrichs's inequality $\norm{v}^2\leq C_F\Big(\norm{\nabla v}^2+\norm{v}^2_{L^2(\Gamma_N)}\Big)$, it follows that
	\begin{align*}
	\frac{d}{dt}\norm{e^{\alpha t}w}^2+(\frac{7\nu}{4}-2\alpha C_F)\norm{e^{\alpha t}\nabla w}^2+&e^{2\alpha t}\Big(\big(c_0+\frac{7\nu}{4}-2\alpha C_F\big)\int_{\Gamma_N}w^2 d\Gamma+2\int_{\Gamma_N}|u^\infty|w^2 d\Gamma\notag\\
	&\quad+\frac{2}{9c_0}\int_{\Gamma_N}w^4 d\Gamma\Big)
	\leq 0.
	\end{align*}
	This complete the rest of the proof for $L^2$- stabilization.
Stabilization result also holds similarly in higher order norms when control works on some part of the boundary.
\end{remark}
\begin{lemma}\label{flm2}
Assume that assumption ${\bf (A1)}$	is satisfied and $u^\infty$ is a steady state solution satisfying \eqref{fx2.1} of \eqref{feq1.5}-\eqref{feq1.6}.	Let $w_0\in H^1(\Omega).$ Then, there holds 
	\begin{align*}
	\Big(\norm{\nabla w(t)}^2&+\int_{\Omega}\frac{(c_0+\nu+2|u^\infty|)}{\nu}w(t)^2 d\Gamma+\frac{1}{9\nu c_0}\norm{w(t)}^4_{L^4(\partial\Omega)}\Big)+\nu e^{-2\alpha t}\int_{0}^{t}\norm{e^{\alpha s}\Delta w(s)}^2\; ds\\&\leq Ce^{C}e^{-2\alpha t}.
	\end{align*}	
\end{lemma}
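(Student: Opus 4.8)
The plan is to obtain an $H^1$ energy identity by testing the strong form \eqref{feq1.7} with $-\Delta w$, which is legitimate under the regularity $w\in H^2(\Omega)$, $w_t\in H^1(\Omega)$ guaranteed by Assumption ${\bf (A2)}$. Multiplying the equation by $-\Delta w$ and integrating over $\Omega$, I would integrate by parts in the parabolic term to write $-(w_t,\Delta w)=\tfrac12\tfrac{d}{dt}\norm{\nabla w}^2-\langle w_t,\tfrac{\partial w}{\partial n}\rangle_{\partial\Omega}$, and then substitute the feedback law \eqref{feqx1} into the boundary term. Since $u^\infty$ is independent of $t$, the boundary contribution collapses into exact time derivatives of $\int_{\partial\Omega}(c_0+\nu+2|u^\infty|)w^2\,d\Gamma$ and $\int_{\partial\Omega}w^4\,d\Gamma$. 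This produces precisely the functional
\begin{equation*}
G(t):=\norm{\nabla w(t)}^2+\frac1\nu\int_{\partial\Omega}(c_0+\nu+2|u^\infty|)w(t)^2\,d\Gamma+\frac{1}{9\nu c_0}\norm{w(t)}^4_{L^4(\partial\Omega)}
\end{equation*}
on the left of the claimed estimate, together with the identity $\tfrac12\tfrac{d}{dt}G+\nu\norm{\Delta w}^2=(u^\infty(\nabla w\cdot {\bf{1}}),\Delta w)+(w(\nabla u^\infty\cdot {\bf{1}}),\Delta w)+(w(\nabla w\cdot {\bf{1}}),\Delta w)$.

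Next I would estimate the three volume terms on the right, absorbing a fraction of $\nu\norm{\Delta w}^2$ at each step. The first is controlled by $\sqrt2\,\norm{u^\infty}_{L^\infty}\norm{\nabla w}\norm{\Delta w}\le\tfrac{\nu}{4}\norm{\Delta w}^2+C\norm{\nabla w}^2$, using the bound on $\norm{\Delta u^\infty}$ from Lemma \ref{key} together with Agmon's inequality for $\norm{u^\infty}_{L^\infty}$. For the second I would use H\"older, the Gagliardo--Nirenberg bound $\norm{\nabla u^\infty}_{L^4}\le C(\norm{u^\infty}^{1/4}\norm{\Delta u^\infty}^{3/4}+\norm{u^\infty})$ and $\norm{w}_{L^4}^2\le C(\norm{w}\norm{\nabla w}+\norm{w}^2)$ to arrive at $\tfrac{\nu}{4}\norm{\Delta w}^2+C(\norm{w}\norm{\nabla w}+\norm{w}^2)$. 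The genuinely nonlinear third term is the main obstacle: after $|(w(\nabla w\cdot {\bf{1}}),\Delta w)|\le\sqrt2\,\norm{w}_{L^\infty}\norm{\nabla w}\norm{\Delta w}$ I would invoke Agmon's inequality $\norm{w}_{L^\infty}\le C(\norm{w}^{1/2}\norm{\Delta w}^{1/2}+\norm{w})$, which produces the critical contribution $C\norm{w}^{1/2}\norm{\nabla w}\norm{\Delta w}^{3/2}$; Young's inequality with exponents $4/3$ and $4$ then gives $\tfrac{\nu}{4}\norm{\Delta w}^2+C\norm{w}^2\norm{\nabla w}^4$, while the lower-order piece contributes $C\norm{w}^2\norm{\nabla w}^2$.

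Collecting these bounds, absorbing $\tfrac{3\nu}{4}\norm{\Delta w}^2$ into the dissipation and multiplying by two, and using $\norm{\nabla w}^4\le\norm{\nabla w}^2\,G$, I would be left with $\tfrac{d}{dt}G+\tfrac{\nu}{2}\norm{\Delta w}^2\le C\norm{\nabla w}^2+C(\norm{w}\norm{\nabla w}+\norm{w}^2)+C\norm{w}^2\norm{\nabla w}^2\,G$. Following the device of Lemma \ref{flm1}, I would multiply by $e^{2\alpha t}$, rewrite the left side as $\tfrac{d}{dt}(e^{2\alpha t}G)-2\alpha e^{2\alpha t}G+\tfrac{\nu}{2}e^{2\alpha t}\norm{\Delta w}^2$, and integrate from $0$ to $t$. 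The term $2\alpha\int_0^t e^{2\alpha s}G\,ds$ and the integrals of $e^{2\alpha s}\norm{\nabla w}^2$, $e^{2\alpha s}\norm{w}^2_{L^2(\partial\Omega)}$ and $e^{2\alpha s}\norm{w}^4_{L^4(\partial\Omega)}$ are all dominated by $C\norm{w_0}^2$ through Lemma \ref{flm1} and the Friedrichs's inequality, the products $\norm{w}\norm{\nabla w}$ being handled by Young's inequality and the same bounds. Writing $y(t)=e^{2\alpha t}G(t)$, the nonlinear term then yields the integral inequality $y(t)\le C+\int_0^t C\,\norm{w(s)}^2\norm{\nabla w(s)}^2\,y(s)\,ds$.

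Finally I would close by Gronwall's inequality, obtaining $y(t)\le C\exp\!\big(C\int_0^t\norm{w}^2\norm{\nabla w}^2\,ds\big)$. The exponent is bounded uniformly in $t$ because Remark \ref{frm2.1} (the case $\alpha=0$) furnishes $\norm{w(s)}\le\norm{w_0}$ and $\int_0^t\norm{\nabla w}^2\,ds\le C\norm{w_0}^2$, so that $\int_0^t\norm{w}^2\norm{\nabla w}^2\,ds\le\norm{w_0}^2\int_0^t\norm{\nabla w}^2\,ds\le C$. Hence $e^{2\alpha t}G(t)\le Ce^{C}$, and multiplying back by $e^{-2\alpha t}$ together with the retained weighted dissipation $\tfrac{\nu}{2}e^{-2\alpha t}\int_0^t\norm{e^{\alpha s}\Delta w}^2\,ds$ delivers exactly the asserted bound $Ce^{C}e^{-2\alpha t}$. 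The only delicate points are selecting $\alpha$ as in \eqref{decay}, so the linear gradient and boundary terms can be absorbed via Lemma \ref{flm1}, and keeping the Gronwall exponent independent of $t$; both are secured by the $L^2$ stabilization already established.
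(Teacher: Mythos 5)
Your proposal is correct and follows essentially the same route as the paper: test \eqref{feq1.7} with $-e^{2\alpha t}\Delta w$, convert the boundary term via the feedback law into an exact time derivative of the functional $G$, absorb $\nu\norm{\Delta w}^2$ after bounding the trilinear terms (your use of Agmon's inequality in place of the paper's $L^4$--$L^4$ H\"older/Gagliardo--Nirenberg splitting is an immaterial variation yielding the same critical term $C\norm{w}^2\norm{\nabla w}^4$), and close with Gr\"onwall, whose exponent $\int_0^t\norm{w}^2\norm{\nabla w}^2\,ds$ is bounded uniformly by Remark \ref{frm2.1}. The handling of the $2\alpha e^{2\alpha t}G$ term via Lemma \ref{flm1} also matches the paper's argument.
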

\begin{proof}
	Form an $L^2$- inner product between \eqref{feq1.7} and $-e^{2\alpha t}\Delta w$ to obtain
	\begin{align}
	\frac{d}{dt}\norm{e^{\alpha t}\nabla w}^2-&2\alpha e^{2\alpha t}\norm{\nabla w}^2+2\nu\norm{e^{\alpha t}\Delta w}^2+\frac{2}{\nu}\int_{\partial\Omega}e^{2\alpha t}\Big((c_0+\nu+2|u^\infty|)w+\frac{2}{9c_0}w^3\Big)w_t\; d\Gamma\notag\\
	&=2e^{2\alpha t}\Big(B\big(u^\infty; w,\Delta w\big)+B\big(w; u^\infty,\Delta w\big)\Big)+2e^{2\alpha t}B(w;w,\Delta w)\label{feq1.21}.
	\end{align}
	The fourth term on the left hand side of \eqref{feq1.21} can be rewritten as
	\begin{align*}
	\frac{2}{\nu}\int_{\partial\Omega}&e^{2\alpha t}\Big((c_0+\nu+2|u^\infty|)w+\frac{2}{9c_0}w^3\Big)w_t d\Gamma\\
	&= \frac{d}{dt}\Big(\int_{\partial\Omega}\frac{(c_0+\nu+2|u^\infty|)}{\nu}w^2 d\Gamma+\frac{1}{9\nu c_0}\big(e^{2\alpha t}\norm{w}^4_{L^4(\partial\Omega)}\big)\Big)\notag\\
	&\qquad-2\alpha e^{2\alpha t}\Big(\int_{\partial\Omega}\frac{(c_0+\nu+2|u^\infty|)}{\nu}w^2 d\Gamma+\frac{1}{9\nu c_0}\norm{w}^4_{L^4(\partial\Omega)}\Big).
	\end{align*}
	The terms on the right hand side of \eqref{feq1.21} are bounded by
	\begin{align*}
	2e^{2\alpha t}&\Big(B\big(u^\infty;w,\Delta w\big)+B\big(w;u^\infty,\Delta w\big)\Big)\\
&\leq 4e^{2\alpha t}\Big(\norm{u^\infty}_{L^4}\norm{\nabla w}_{L^4}\norm{\Delta w}+\norm{w}_{L^4}\norm{\nabla u^\infty}_{L^4}\norm{\Delta w}\Big)	\\
&	\leq \frac{\nu}{2}\norm{e^{\alpha t}\Delta w}^2+Ce^{2\alpha t}\norm{ w}^2\Big(\norm{u^\infty}^8_{L^4}+\norm{u^\infty}^2_{L^4}+\norm{\nabla u^\infty}^2_{L^4}\Big)+C\norm{e^{\alpha t}\nabla w}^2\norm{\nabla u^\infty}^2_{L^4},
	\end{align*}
	and using Lemma \ref{flm1}
	\begin{align*}
	2e^{2\alpha t}B(w;w,\Delta w)&\leq Ce^{2\alpha t}\norm{w}_{L^4}\norm{\nabla w}_{L^4}\norm{\Delta w}\\
	&\leq \frac{\nu}{2}\norm{e^{\alpha t}\Delta w}^2+Ce^{2\alpha t}\norm{w}^2\norm{\nabla w}^4+Ce^{2\alpha t}\norm{w}^2+Ce^{2\alpha t}\norm{w}^2\norm{\nabla w}^2.
	\end{align*}
	Finally, using the bounds of $\norm{u^\infty}_2$, we arrive from \eqref{feq1.21} at
	\begin{align}
	\frac{d}{dt}\Big(e^{2\alpha t}\big(\norm{\nabla w}^2&+\int_{\Omega}\frac{(c_0+\nu+2|u^\infty|)}{\nu}w^2 d\Gamma+\frac{1}{9\nu c_0}\norm{w}^4_{L^4(\partial\Omega)}\big)\Big)+\nu\norm{e^{\alpha t}\Delta w}^2\notag\\
	&\leq 2\alpha e^{2\alpha t}\Big(\int_{\partial\Omega}\frac{(c_0+\nu+2|u^\infty|)}{\nu}w^2\; d\Gamma+
	\frac{1}{9\nu c_0}\norm{w}^4_{L^4(\partial\Omega)}\Big)+e^{2\alpha t}\norm{\nabla w}^2\notag\\ 
	&\qquad +Ce^{2\alpha t}\norm{w}^2+Ce^{2\alpha t}\norm{w}^2\norm{\nabla w}^2
	+Ce^{2\alpha t}\norm{w}^2\norm{\nabla w}^4\label{feq1.22}.
	\end{align}
	Integrate the above inequality from $0$ to $t,$ and then use the Gr\"onwall's inequality with Lemma \ref{flm1} to obtain
	\begin{align*}
	e^{2\alpha t}\Big(\norm{\nabla w(t)}^2&+\int_{\Omega}\frac{(c_0+\nu+2|u^\infty|)}{\nu}w(t)^2 d\Gamma+\frac{1}{9\nu c_0}\norm{w(t)}^4_{L^4(\partial\Omega)}\Big)+\nu\int_{0}^{t}\norm{e^{\alpha s}\Delta w(s)}^2\; ds\\&\leq C\Big(\norm{w_0}^2_1+\norm{w_0}^2_{L^2(\partial\Omega)}+\norm{w_0}^4_{L^4(\partial\Omega)}\Big)\exp\Big(C\int_{0}^{t}\norm{w}^2\norm{\nabla w}^2 ds\Big).
	\end{align*}
	Use Remark \ref{frm2.1} for the integral term under the exponential sign, and then multiply the resulting inequality by $e^{-2\alpha t}$ to complete the rest of the proof.
\end{proof}
\begin{lemma}\label{flm3}
	Assume that assumption ${\bf (A1)}$	is satisfied and $u^\infty$ is a steady state solution satisfying \eqref{fx2.1} of \eqref{feq1.5}-\eqref{feq1.6}. Let $w_0\in H^1(\Omega)$. Then,  the following estimate holds
	\begin{align*}
	\Big(\nu \norm{\nabla w(t)}^2&+\int_{\Omega}(c_0+\nu+2|u^\infty|)w(t)^2\; d\Gamma+\frac{1}{9c_0}\norm{w(t)}^4_{L^4(\partial\Omega)}\Big)+e^{-2\alpha t}\int_{0}^{t}e^{2\alpha s}\norm{w_t(s)}^2 ds\leq Ce^{C}e^{-2\alpha t}.
	\end{align*}
\end{lemma}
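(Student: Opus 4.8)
The plan is to run the same weighted energy argument as in Lemma \ref{flm2}, but now to test the evolution equation \eqref{feq1.7} against $e^{2\alpha t}w_t$ rather than $-e^{2\alpha t}\Delta w$, so that a full term $e^{2\alpha t}\norm{w_t}^2$ is produced on the left. Forming the $L^2$-inner product of \eqref{feq1.7} with $e^{2\alpha t}w_t$ and integrating by parts in the diffusion term gives
$$
e^{2\alpha t}\norm{w_t}^2 + \tfrac{\nu}{2}e^{2\alpha t}\tfrac{d}{dt}\norm{\nabla w}^2 - \nu e^{2\alpha t}\Big\langle \frac{\partial w}{\partial n}, w_t\Big\rangle_{\partial\Omega} = -e^{2\alpha t}\Big(B(u^\infty;w,w_t) + B(w;u^\infty,w_t) + B(w;w,w_t)\Big).
$$
The first key step is to substitute the feedback law \eqref{feqx1} into the boundary integral; since $u^\infty$ is independent of $t$, it becomes a perfect time derivative,
$$
-\nu\Big\langle \frac{\partial w}{\partial n}, w_t\Big\rangle_{\partial\Omega} = \frac{1}{2}\frac{d}{dt}\int_{\partial\Omega}(c_0+\nu+2|u^\infty|)w^2\,d\Gamma + \frac{1}{18c_0}\frac{d}{dt}\norm{w}^4_{L^4(\partial\Omega)}.
$$
Multiplying through by $2$ and pulling $e^{2\alpha t}$ inside the derivatives assembles exactly the energy $\nu\norm{\nabla w}^2 + \int_{\partial\Omega}(c_0+\nu+2|u^\infty|)w^2\,d\Gamma + \frac{1}{9c_0}\norm{w}^4_{L^4(\partial\Omega)}$ under $\frac{d}{dt}$, together with $-2\alpha$ remainder terms that I would move to the right-hand side.

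Next I would bound the three trilinear terms. Writing $|B(u^\infty;w,w_t)|\le \norm{u^\infty}_{L^\infty}\norm{\nabla w}\norm{w_t}$ (with $\norm{u^\infty}_{L^\infty}$ finite by Agmon's inequality and the bound on $\norm{\Delta u^\infty}$ from Lemma \ref{key}), $|B(w;u^\infty,w_t)|\le \norm{w}_{L^4}\norm{\nabla u^\infty}_{L^4}\norm{w_t}$, and $|B(w;w,w_t)|\le \norm{w}_{L^4}\norm{\nabla w}_{L^4}\norm{w_t}$, Young's inequality lets me absorb one half of $e^{2\alpha t}\norm{w_t}^2$ into the left, leaving residuals of the form $Ce^{2\alpha t}\big(\norm{\nabla w}^2 + \norm{w}^2_{L^4} + \norm{w}^2_{L^4}\norm{\nabla w}^2_{L^4}\big)$. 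The genuinely nonlinear residual $\norm{w}^2_{L^4}\norm{\nabla w}^2_{L^4}$ is the delicate one, since invoking the Gagliardo--Nirenberg inequality $\norm{\nabla w}_{L^4}\le C(\norm{w}^{1/4}\norm{\Delta w}^{3/4}+\norm{w})$ introduces a factor $\norm{\Delta w}^{3/2}$.

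The heart of the argument is then the time integration. After integrating from $0$ to $t$ and multiplying by $e^{-2\alpha t}$, every residual must be shown to be $\le Ce^{C}e^{-2\alpha t}$. Here I would exploit that, by Lemma \ref{flm1} and Lemma \ref{flm2}, the factor $\norm{w(s)}^2_{L^4}\le C(\norm{w}\norm{\nabla w}+\norm{w}^2)$ already decays like $e^{-2\alpha s}$, which cancels the $e^{2\alpha s}$ weight and collapses the offending space--time integral to one involving only $\norm{\nabla w}^2_{L^4}$, whose $\norm{\Delta w}^{3/2}$ content is then controlled by the uniform bound $\int_0^t e^{2\alpha s}\norm{\Delta w(s)}^2\,ds\le C$ supplied by Lemma \ref{flm2} (via Hölder with exponents $4/3,4$). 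The $\norm{\nabla w}^2$ and the $2\alpha$-remainder boundary terms are estimated directly by the quantities already bounded in Lemmas \ref{flm1}--\ref{flm2}, and a final Grönwall step, with the exponent $\int_0^t\norm{w}^2\norm{\nabla w}^2\,ds$ bounded through Remark \ref{frm2.1}, produces the factor $e^{C}$ and closes the estimate. The main obstacle is precisely this weighted bookkeeping: one must check that the superlinear terms generated by the cubic feedback and the convective nonlinearity decay fast enough that, after carrying the $e^{-2\alpha t}$ prefactor, the stabilization rate $\alpha$ of \eqref{decay} is preserved and not degraded.
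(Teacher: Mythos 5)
Your proposal follows essentially the same route as the paper: testing against $e^{2\alpha t}w_t$, recognizing the boundary feedback term as a perfect time derivative (up to the $2\alpha$ correction) because $u^\infty$ is time-independent, absorbing $\norm{w_t}^2$ via Young's inequality, bounding the trilinear residuals through $L^4$/Gagliardo--Nirenberg estimates controlled by Lemmas \ref{flm1}--\ref{flm2} (in particular the $\int_0^t e^{2\alpha s}\norm{\Delta w}^2\,ds$ bound), and closing with Gr\"onwall. The bookkeeping details you flag are exactly the ones the paper handles, so the argument is correct and not materially different.
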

\begin{proof}
	Choose $v=e^{2\alpha t}w_t$ in \eqref{feq1.10} to obtain
	\begin{align}
	2\norm{e^{\alpha t}w_t}^2+\nu \frac{d}{dt}\norm{e^{\alpha t}\nabla w}^2&-2\nu\alpha\norm{e^{\alpha t}\nabla w}^2+2\int_{\partial\Omega}\Big((c_0+\nu+2|u^\infty|)w+\frac{2}{9c_0}w^3\Big)e^{2\alpha t}w_t\; d\Gamma\notag\\
	&=-2e^{2\alpha t}\Big(B\big(u^\infty;w, w_t\big)+B\big(w;u^\infty, w_t\big)\Big)-2e^{2\alpha t}B\big(w;w,w_t\big)\label{feq1.31}.
	\end{align}
	The terms on the right hand side of \eqref{feq1.31} are bounded by
	\begin{align*}
	2e^{2\alpha t}\Big(B\big(u^\infty;w, w_t\big)+B\big(w;u^\infty, w_t\big)\Big)&\leq \frac{1}{2}e^{2\alpha t}\norm{w_t}^2+Ce^{2\alpha t}\Big(\norm{ w}^2+\norm{\nabla w}^2\Big)\norm{\nabla u^\infty}^2_{L^4}\\
	&\qquad+Ce^{2\alpha t}\Big(\norm{ w}^2+\norm{\Delta w}^2\Big)\norm{ u^\infty}^2_{L^4},
	\end{align*}
	and using Lemma \ref{flm1}
	\begin{align*}
	2e^{2\alpha t}B\big(w;w,w_t\big)&\leq Ce^{2\alpha t}\norm{w}_{L^4}\norm{\nabla w}_{L^4}\norm{w_t}\\
	&\leq \frac{1}{2}e^{2\alpha t}\norm{w_t}^2+Ce^{2\alpha t}\norm{w}^2\norm{\nabla w}^4+Ce^{2\alpha t}\norm{\Delta w}^2+Ce^{2\alpha t}\norm{w}^2\\
	&\qquad+Ce^{2\alpha t}\norm{w}^2\norm{\nabla w}^2.
	\end{align*}
	Hence, rewriting the boundary integral term in \eqref{feq1.31} as in previous Lemma \ref{flm2}, we arrive from \eqref{feq1.31} at
	\begin{align*}
	\frac{d}{dt}\Big(e^{2\alpha t}\big(\nu \norm{\nabla w}^2&+\int_{\Omega}(c_0+\nu+2|u^\infty|)w^2\; d\Gamma+\frac{1}{9c_0}\norm{w}^4_{L^4(\partial\Omega)}\big)\Big)+\norm{e^{\alpha t}w_t}^2\\
	&\leq Ce^{2\alpha t}\Big(\int_{\Omega}(c_0+\nu+2|u^\infty|)w^2\; d\Gamma+\frac{1}{9c_0}\norm{w}^4_{L^4(\partial\Omega)}+\norm{ w}^2_2+\norm{w}^2\norm{\nabla w}^2\\
	&\hspace{2cm}+\norm{w}^2\norm{\nabla w}^4+\norm{w}^2_2\big(\norm{u^\infty}^2_{L^4}+\norm{\nabla u^\infty}^2_{L^4}\big)\Big).
	\end{align*}
	Apply Lemmas \ref{flm1} and \ref{flm2}, and the Gr\"onwall's inequality to the above inequality to complete the rest of the proof.
\end{proof}
\begin{lemma}\label{flm4}
Assume that assumptions ${\bf (A1)}$ and ${\bf (A2)}$	are satisfied and $u^\infty$ is a steady state solution satisfying \eqref{fx2.1} of \eqref{feq1.5}-\eqref{feq1.6}. Then, there holds
	\begin{align*}
	\norm{w_t(t)}^2+\nu e^{-2\alpha t}\int_{0}^{t}e^{2\alpha s}\norm{\nabla w_t(s)}^2 ds&+2e^{-2\alpha t}\int_{0}^{t}e^{2\alpha s}\Big(\int_{\partial\Omega}(c_0+\nu+2|u^\infty|)w_t(s)^2\; d\Gamma\\
	&\qquad+\frac{2}{3c_0}\norm{w(s)w_t(s)}^2_{L^2(\partial\Omega)}\Big)\; ds\leq Ce^{C}e^{-2\alpha t}.
	\end{align*}
\end{lemma}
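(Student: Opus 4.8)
The plan is to derive the energy estimate for $w_t$ by differentiating the weak formulation \eqref{feq1.10} in time and then testing with $e^{2\alpha t}w_t$. First I would differentiate \eqref{feq1.10} with respect to $t$; since $u^\infty$ is time independent, this gives, for all $v\in H^1(\Omega)$,
\begin{align*}
(w_{tt},v)+\nu(\nabla w_t,\nabla v)&+B\big(u^\infty;w_t,v\big)+B\big(w_t;u^\infty,v\big)+B\big(w_t;w,v\big)+B\big(w;w_t,v\big)\\
&+\Big\langle (c_0+\nu+2|u^\infty|)w_t+\frac{2}{3c_0}w^2w_t,\,v\Big\rangle_{\partial\Omega}=0.
\end{align*}
The linear boundary coefficient, being time independent, reproduces $(c_0+\nu+2|u^\infty|)w_t$, while the cubic control term differentiates to $\frac{2}{3c_0}w^2w_t$, which is precisely the source of the target quantity $\frac{2}{3c_0}\norm{w w_t}^2_{L^2(\partial\Omega)}$. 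The rigorous justification of this differentiation (existence of $w_{tt}$ and of $w_t(0)$) is supplied by the regularity and compatibility conditions in Assumption ${\bf (A2)}$, which in particular give $\norm{w_t(0)}\leq C$.

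Next I would set $v=e^{2\alpha t}w_t$. Using $e^{2\alpha t}(w_{tt},w_t)=\tfrac12\tfrac{d}{dt}\norm{e^{\alpha t}w_t}^2-\alpha e^{2\alpha t}\norm{w_t}^2$ for the first term and keeping the boundary contributions nonnegative on the left, I arrive (after multiplying by $2$) at
\begin{align*}
\frac{d}{dt}\norm{e^{\alpha t}w_t}^2+2\nu\norm{e^{\alpha t}\nabla w_t}^2&+2e^{2\alpha t}\Big(\int_{\partial\Omega}(c_0+\nu+2|u^\infty|)w_t^2\,d\Gamma+\frac{2}{3c_0}\norm{w w_t}^2_{L^2(\partial\Omega)}\Big)\\
&=2\alpha e^{2\alpha t}\norm{w_t}^2-2e^{2\alpha t}\Big(B\big(u^\infty;w_t,w_t\big)+B\big(w_t;u^\infty,w_t\big)\Big)\\
&\qquad-2e^{2\alpha t}\Big(B\big(w_t;w,w_t\big)+B\big(w;w_t,w_t\big)\Big).
\end{align*}

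The main work is bounding the four trilinear terms on the right. For the $u^\infty$-terms I would proceed as in Lemma \ref{flm3}, estimating $B(u^\infty;w_t,w_t)\leq \norm{u^\infty}_{L^4}\norm{\nabla w_t}\norm{w_t}_{L^4}$ and similarly for its companion, then applying the Gagliardo--Nirenberg inequality to $\norm{w_t}_{L^4}$ and Young's inequality to peel off $\tfrac{\nu}{2}\norm{e^{\alpha t}\nabla w_t}^2$; the surviving factors are controlled by the fixed constants $\norm{u^\infty}_{L^4},\norm{\nabla u^\infty}_{L^4}$ coming from Lemma \ref{key}. For the genuinely nonlinear terms I would bound $B(w;w_t,w_t)\leq \norm{w}_{L^4}\norm{\nabla w_t}\norm{w_t}_{L^4}$ and $B(w_t;w,w_t)\leq \norm{w_t}_{L^4}^2\norm{\nabla w}$, again using Gagliardo--Nirenberg on the $L^4$-norms of $w_t$ and Young's inequality to absorb a further $\tfrac{\nu}{2}\norm{e^{\alpha t}\nabla w_t}^2$ on the left, so that exactly $\nu\norm{e^{\alpha t}\nabla w_t}^2$ remains (matching the coefficient in the statement). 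This yields a differential inequality whose right-hand side has the form $Ce^{2\alpha t}g(t)\norm{w_t}^2$, where $g(t)$ collects the coefficients produced by Young's inequality, such as $1+\norm{\nabla w}+\norm{w}_{L^4}^2+\norm{w}_{L^4}^4$.

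Finally I would integrate from $0$ to $t$ and invoke Grönwall's inequality, then multiply by $e^{-2\alpha t}$ and use $\norm{w_t(0)}\leq C$ to reach the bound $Ce^{C}e^{-2\alpha t}$. The crucial point for closing the argument is that $g(t)$ is uniformly bounded: the estimates $\norm{w(t)}\le C$, $\norm{\nabla w(t)}\le C$, $\norm{\Delta w(t)}\le C$ from Lemmas \ref{flm1}--\ref{flm3}, combined with Gagliardo--Nirenberg, control every $L^4$-norm of $w$ entering $g$. The expected main obstacle is precisely the treatment of $B(w;w_t,w_t)$ and $B(w_t;w,w_t)$: in two dimensions the Gagliardo--Nirenberg exponents force a factor $\norm{\nabla w_t}^{3/2}$, and Young's inequality must be calibrated so that, after absorbing the $\norm{\nabla w_t}^2$ part into the diffusion, the leftover coefficient of $\norm{w_t}^2$ depends only on quantities already shown to be bounded in Lemmas \ref{flm1}--\ref{flm3}; only then does Grönwall's inequality close without circularity.
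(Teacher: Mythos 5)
Your overall strategy coincides with the paper's proof of this lemma: differentiate the equation in time (justified by Assumption ${\bf (A2)}$), test with $e^{2\alpha t}w_t$, keep the boundary contributions $(c_0+\nu+2|u^\infty|)w_t^2$ and $\frac{2}{3c_0}w^2w_t^2$ on the left, split the four trilinear terms exactly as you do, absorb a total of $\nu\norm{e^{\alpha t}\nabla w_t}^2$ into the diffusion via Gagliardo--Nirenberg and Young, and compute $\norm{w_t(0)}$ from the equation at $t=0$. Up to that point your proposal reproduces the paper's argument.

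The final step, however, has a genuine gap. After Young's inequality the right-hand side is $Ce^{2\alpha t}g(t)\norm{w_t}^2$ plus integrable remainders, where $g$ (collecting $1+\norm{\nabla w}^2+\norm{w}_{L^4}^2+\norm{w}_{L^4}^4$, etc.) is only \emph{bounded} on $(0,\infty)$, not integrable. Applying Gr\"onwall to $\frac{d}{dt}\norm{e^{\alpha t}w_t}^2\le Cg(t)\norm{e^{\alpha t}w_t}^2+\dots$ then yields a factor $\exp\big(C\int_0^t g(s)\,ds\big)\le e^{Ct}$, which grows in time and destroys the claimed uniform bound $\norm{w_t(t)}^2\le Ce^{C}e^{-2\alpha t}$ (indeed, for $C$ large relative to $2\alpha$ it gives no decay at all). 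The paper closes the estimate without Gr\"onwall: after integrating, the dangerous term is $\int_0^t e^{2\alpha s}g(s)\norm{w_t(s)}^2\,ds\le \big(\sup_s g(s)\big)\int_0^t e^{2\alpha s}\norm{w_t(s)}^2\,ds$, and the last integral is already bounded by $Ce^{C}$ \emph{uniformly in} $t$ by Lemma \ref{flm3} --- that is precisely the purpose of that lemma. Replace your Gr\"onwall step by this direct appeal to Lemma \ref{flm3} (together with Lemmas \ref{flm1}--\ref{flm2} for the $w$-only remainders) and the argument closes correctly.
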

\begin{proof}
	Differentiate \eqref{feq1.7} with respect to $t$ and then take the inner product with $e^{2\alpha t}w_t$ to obtain 
	\begin{align}
	\frac{d}{dt}\big(\norm{e^{\alpha t}w_t}^2\big)&-2\alpha \norm{e^{\alpha t}w_t}^2+2\nu\norm{e^{\alpha t}\nabla w_t}^2+2\int_{\partial\Omega}\big((c_0+\nu+2|u^\infty|)w_t^2+\frac{2}{3c_0}w^2w_t^2\big)e^{2\alpha t}\;d\Gamma\notag\\
	&=-2e^{2\alpha t}\Big(B\big(w_t;w,w_t\big)+B\big(w;w_t,w_t\big)\Big)-2e^{2\alpha t}\Big(u^\infty(\nabla w_t\cdot{\bf{1}})+w_t(\nabla u^\infty\cdot{\bf{1}}),w_t\Big)\label{feq2.1}.
	\end{align}
	The first right hand side term in \eqref{feq2.1} is bounded by
	\begin{align*}
	-2e^{2\alpha t}&\Big(B\big(w_t;w,w_t\big)+B\big(w;w_t,w_t\big)\Big)\\
	&\leq \frac{\nu}{2}\norm{e^{\alpha t}\nabla w_t}^2+Ce^{2\alpha t}\Big(\norm{w_t}^2+\norm{w_t}^2\norm{\nabla w}^2+\norm{w}^2\norm{\nabla w}^2\norm{w_t}^2+\norm{w}^2\norm{\nabla w}^2\\
	&\hspace{1cm}+\norm{w_t}^2\norm{w}^2+\norm{w_t}^2\norm{w}^4+\norm{w}^2\Big).
	\end{align*}
	The other right hand term in \eqref{feq2.1} is bounded by
	\begin{align*}
	-2e^{2\alpha t}&\Big(u^\infty\big(\nabla w_t\cdot{\bf{1}}\big)+w_t\big(\nabla u^\infty\cdot{\bf{1}}\big),w_t\Big)\\
	&\leq \frac{\nu}{2}\norm{e^{\alpha t}\nabla w_t}^2+Ce^{2\alpha t}\norm{w_t}^2\Big(\norm{\nabla u^\infty}^2+\norm{u^\infty}^2_{L^4}+\norm{u^\infty}^4_{L^4}\Big).
	\end{align*}
Therefore, from \eqref{feq2.1}, we obtain
	\begin{align}
	\frac{d}{dt}(\norm{e^{\alpha t}w_t}^2)+&\nu \norm{e^{\alpha t}\nabla w_t}^2+2e^{2\alpha t}
	\Big(\int_{\partial\Omega}(c_0+\nu+2|u^\infty|)w_t^2\; d\Gamma+\frac{2}{3c_0}\norm{ww_t}^2_{L^2(\partial\Omega)}\Big)\notag\\
	&\leq Ce^{2\alpha t}\Big(\norm{w_t}^2+\norm{w_t}^2\norm{\nabla w}^2+\norm{w}^2\norm{\nabla w}^2\norm{w_t}^2+\norm{w}^2\norm{\nabla w}^2\notag\\
	&\hspace{4cm}+\norm{w_t}^2\norm{w}^2+\norm{w_t}^2\norm{w}^4+\norm{w}^2\Big)\label{feq2.2}.
	\end{align}
	To calculate $\norm{w_t(0)},$
	take the inner product between \eqref{feq1.7} and $w_t$ to obtain at $t=0$
	\begin{align*}
	\norm{w_t(0)}^2\leq C\Big(\norm{\nabla w_0}^2+\norm{\Delta w_0}^2+\norm{w_0}^2\norm{\nabla w_0}^4\Big).
	\end{align*}
	Integrate the inequality \eqref{feq2.2} from $0$ to $t$ and then use Lemmas \ref{flm1}-\ref{flm3} to complete the rest of the proof.
\end{proof}
\begin{lemma}\label{flm5}
Assume that assumptions ${\bf (A1)}$ and ${\bf (A2)}$	are satisfied and $u^\infty$ is a steady state solution satisfying \eqref{fx2.1} of \eqref{feq1.5}-\eqref{feq1.6}. Then the following estimate holds
	\begin{align*}
	\Big(\norm{\nabla w_t(t)}^2+\int_{\partial\Omega}(c_0+\nu+2|u^\infty|)w_t(t)^2 \; d\Gamma&+\frac{2}{3c_0}\norm{w(t)w_t(t)}^2_{L^2(\partial\Omega)}\Big)+\nu e^{-2\alpha t}\int_{0}^{t}e^{2\alpha s}\norm{\Delta w_t(s)}^2 ds\\
	&\leq Ce^{C}e^{-2\alpha t}.
	\end{align*}
\end{lemma}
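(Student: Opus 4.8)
The plan is to raise the regularity of the proof of Lemma \ref{flm4} by one spatial derivative: where Lemma \ref{flm4} differentiates \eqref{feq1.7} in time and tests against $e^{2\alpha t}w_t$, here I would differentiate \eqref{feq1.7} in $t$ and form the $L^2$-inner product with $-2e^{2\alpha t}\Delta w_t$. This is exactly the passage from an $L^\infty(L^2)$-type bound to an $L^\infty(H^1)$-type bound that takes Lemma \ref{flm1} to Lemma \ref{flm2}. Writing the differentiated equation (using $u^\infty_t=0$) as
\[
w_{tt}-\nu\Delta w_t+u^\infty(\nabla w_t\cdot{\bf{1}})+w_t(\nabla u^\infty\cdot{\bf{1}})+w_t(\nabla w\cdot{\bf{1}})+w(\nabla w_t\cdot{\bf{1}})=0,
\]
the diffusion term contributes $2\nu e^{2\alpha t}\norm{\Delta w_t}^2$, of which a positive multiple will survive the absorptions below to produce the dissipation $\nu\norm{\Delta w_t}^2$ in the statement; the inertial term, after Green's formula, gives $\frac{d}{dt}\norm{e^{\alpha t}\nabla w_t}^2-2\alpha e^{2\alpha t}\norm{\nabla w_t}^2$ together with a boundary integral $2e^{2\alpha t}\langle w_{tt},\partial w_t/\partial n\rangle_{\partial\Omega}$.

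That boundary integral is the crux. Differentiating the feedback law \eqref{feqx1} in time gives $\partial w_t/\partial n=-\frac{1}{\nu}\big((c_0+\nu+2|u^\infty|)w_t+\frac{2}{3c_0}w^2w_t\big)$, so, via the pointwise identities $w_tw_{tt}=\frac{1}{2}\partial_t(w_t^2)$ and $w^2w_tw_{tt}=\frac{1}{2}\partial_t(w^2w_t^2)-ww_t^3$, the term $\langle w_{tt},\partial w_t/\partial n\rangle_{\partial\Omega}$ is recognized as the time derivative of the boundary energy $\int_{\partial\Omega}(c_0+\nu+2|u^\infty|)w_t^2\,d\Gamma+\frac{2}{3c_0}\norm{ww_t}^2_{L^2(\partial\Omega)}$ appearing in the statement (up to the normalization seen already in Lemma \ref{flm2}), plus the usual $-2\alpha$ contribution from the $e^{2\alpha t}$ weight, plus a residual cubic boundary term proportional to $\int_{\partial\Omega}ww_t^3\,d\Gamma$. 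This residual is controlled by the Boundary Trace Embedding Theorem \eqref{1.30} and Young's inequality, contributing lower-order quantities that feed into the Grönwall argument.

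For the convective terms moved to the right-hand side I would bound $2e^{2\alpha t}\big(B(u^\infty;w_t,\Delta w_t)+B(w_t;u^\infty,\Delta w_t)+B(w_t;w,\Delta w_t)+B(w;w_t,\Delta w_t)\big)$ by the Gagliardo--Nirenberg and Agmon inequalities together with Young's inequality, absorbing the $\Delta w_t$ factors into $2\nu\norm{e^{\alpha t}\Delta w_t}^2$ so that a positive multiple of $\norm{\Delta w_t}^2$ remains, and leaving products of $\norm{w},\norm{\nabla w},\norm{w_t},\norm{\nabla w_t}$ and norms of $u^\infty$ (bounded through Lemma \ref{key} and Assumption {\bf (A2)}). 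After integrating in $t$ from $0$ to $t$ and invoking Grönwall's inequality, the accumulated integrals are finite because Lemmas \ref{flm1}--\ref{flm4} already bound $\int_0^t e^{2\alpha s}\big(\norm{\nabla w}^2+\norm{\Delta w}^2+\norm{w_t}^2+\norm{\nabla w_t}^2\big)\,ds$; the initial value $\norm{\nabla w_t(0)}$ is estimated by taking the gradient of \eqref{feq1.7} at $t=0$ and using $w_0\in H^3$, consistent with the generic constant $C=C(\norm{w_0}_3,\nu,f^\infty)$. Multiplying by $e^{-2\alpha t}$ yields the claimed exponential decay.

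The main obstacle is the boundary integral $\langle w_{tt},\partial w_t/\partial n\rangle_{\partial\Omega}$: it involves the second time derivative of $w$ on $\partial\Omega$, which is not directly furnished by Assumption {\bf (A2)}, so the formal computation must be justified on a Galerkin approximation (or under the regularity implicit in the compatibility conditions) before passing to the limit, and it is precisely the perfect-derivative structure produced by the differentiated feedback law that renders this $w_{tt}$ boundary term harmless. A secondary technical point is the careful Young-type splitting needed to keep the coefficient of $\norm{\Delta w_t}^2$ positive after simultaneously absorbing the right-hand trilinear terms and the cubic trace residual.
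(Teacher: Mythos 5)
Your proposal is correct and follows essentially the same route as the paper's own proof: differentiate \eqref{feq1.7} in time, test against $-e^{2\alpha t}\Delta w_t$, identify the boundary contribution (via the time-differentiated feedback law) as the exact time derivative of the boundary energy $\int_{\partial\Omega}(c_0+\nu+2|u^\infty|)w_t^2\,d\Gamma+\frac{2}{3c_0}\norm{ww_t}^2_{L^2(\partial\Omega)}$ plus a cubic trace residual, absorb the trilinear terms into the dissipation, and close with Gr\"onwall using Lemmas \ref{flm1}--\ref{flm4} together with the estimate $\norm{\nabla w_t(0)}\leq C\norm{w_0}_3$. Your explicit remarks on justifying the $w_{tt}$ boundary pairing and on keeping the coefficient of $\norm{\Delta w_t}^2$ positive are sensible refinements of the same argument, not a departure from it.
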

\begin{proof}
	Differentiate \eqref{feq1.7} with respect to $t$ and then take inner product with $-e^{2\alpha t}\Delta w_t$ to obtain
	\begin{align}
	\frac{d}{dt}\norm{e^{\alpha t}\nabla w_t}^2&-2\alpha\norm{e^{\alpha t}\nabla w_t}^2+2\nu\norm{e^{\alpha t}\Delta w_t}^2+\frac{d}{dt}\int_{\partial\Omega}e^{2\alpha t}\Big((c_0+\nu+2|u^\infty|)w_t^2+\frac{2}{3c_0}w^2w_t^2\Big) d\Gamma\notag\\
	&\leq 2e^{2\alpha t}\Big(B\big(u^\infty;w_t,\Delta w_t\big)+B\big(w_t;u^\infty,\Delta w_t\big)\Big)+2e^{2\alpha t}B\big(w_t,w,\Delta w_t\big)\notag\\
	&\qquad+2e^{2\alpha t}B\big(w;w_t,\Delta w_t\big)+C\int_{\partial\Omega}e^{2\alpha t}\Big(w_t^2+ww_t^3+w^2w_t^2\Big)\;d\Gamma\label{feq2.5}.
	\end{align}
	The first three terms on the right hand side of \eqref{feq2.5} are bounded by
	\begin{align*}
	2e^{2\alpha t}\Big(B\big(u^\infty;w_t,\Delta w_t\big)+B\big(w_t;u^\infty,\Delta w_t\big)\Big)&\leq \frac{\nu}{3}\norm{e^{\alpha t}\Delta w_t}^2+Ce^{2\alpha t}\norm{ w_t}^2\Big(\norm{\nabla u^\infty}^2_{L^4}+\norm{u^\infty}^2_{L^4}\Big)\\
	&\quad+Ce^{2\alpha t}\norm{\nabla w_t}^2\Big(\norm{\nabla u^\infty}^2_{L^4}+\norm{ u^\infty}^2_{L^4}\Big),
	\end{align*}
	and using Lemma \ref{flm1}
	\begin{align*}
	2e^{2\alpha t}&\Big(B\big(w_t;w,\Delta w_t\big)+
	B\big(w;w_t,\Delta w_t\big)\Big)\\
	&\leq \frac{2\nu}{3}\norm{e^{\alpha t}\Delta w_t}^2+Ce^{2\alpha t}\norm{w_t}^2\Big(\norm{w}^2_2+\norm{w}^2\norm{\nabla w}^4\Big)+Ce^{2\alpha t}\norm{\nabla w_t}^2\Big(\norm{w}^2+\norm{\Delta w}^2\Big).
	\end{align*}
	The boundary term on the right hand side of \eqref{feq2.5} is bounded by
	\begin{align*}
	C\int_{\partial\Omega}e^{2\alpha t}\Big(w_t^2+ww_t^3+w^2w_t^2\Big)\;d\Gamma&\leq C\int_{\partial\Omega}e^{2\alpha t}\Big(w_t^2+w^2w_t^2\Big)\;d\Gamma+Ce^{2\alpha t}\norm{w_t}^4_{L^4(\partial\Omega)}
	.
	\end{align*}
	Hence, from \eqref{feq2.5}, we arrive at
	\begin{align*}
	\frac{d}{dt}\Big(\norm{e^{\alpha t}\nabla w_t}^2&+e^{2\alpha t}\int_{\partial\Omega}(c_0+\nu+2|u^\infty|)w_t^2\;d\Gamma+\frac{2}{3c_0}\int_{\partial\Omega}e^{2\alpha t}w^2w_t^2\;d\Gamma\Big)+\nu\norm{e^{\alpha t}\Delta w_t}^2\\
	&\leq Ce^{2\alpha t}\norm{w_t}^2\Big(\norm{w}^2_2+\norm{w}^2\norm{\nabla w}^4\Big)+Ce^{2\alpha t}\norm{\nabla w_t}^2\Big(1+\norm{w}^2+\norm{\Delta w}^2\Big)\\
	&\qquad+\int_{\partial\Omega}e^{2\alpha t}\Big(w_t^2+w^2w_t^2\Big)\;d\Gamma+Ce^{2\alpha t}\Big(\norm{w_t}^4+\norm{\nabla w_t}^4\Big).
	\end{align*}
	Integrate the above inequality from $0$ to $t$ and then apply the Gr\"onwall's inequality along with Lemmas \ref{flm1}-\ref{flm4} to obtain
	\begin{align}
	\Big(\norm{e^{\alpha t}\nabla w_t(t)}^2&+e^{2\alpha t}\int_{\partial\Omega}(c_0+\nu+2|u^\infty|)w_t(t)^2\;d\Gamma+\frac{2}{3c_0}\int_{\partial\Omega}e^{2\alpha t}w(t)^2w_t(t)^2\;d\Gamma\Big)\notag\\
	&+\nu \int_{0}^{t}\norm{e^{\alpha s}\Delta w_t(s)}^2 ds\leq C\Big(\norm{\nabla w_t(0)}^2+\norm{w_t(0)}_{L^2(\partial\Omega)}+\norm{w(0)w_t(0)}^2_{L^2(\partial\Omega)}\Big)\notag\\
	&\hspace{4.5cm}\exp\Big(C\int_{0}^{t}\big(\norm{w(t)}^2+\norm{\Delta w(t)}^2+\norm{\nabla w_t(t)}^2\big)\;ds\Big)\label{feqx2.5}.
	\end{align}
	Differentiate \eqref{feq1.7} with respect to $x_1$ and $x_2$ to get
	$\norm{\nabla w_t(0)}\leq C\norm{w_0}_3$. Further, by boundary trace embedding theorem, $\norm{w_t(0)}_{L^2(\partial\Omega)}\leq C\norm{w_t(0)}^2_1$ and 
	$$\norm{w(0)w_t(0)}^2_{L^2(\partial\Omega)}\leq C\norm{w(0)}_{L^4(\partial\Omega)}\norm{w_t(0)}^2_1.$$
	Again, using of Lemmas \ref{flm1}, \ref{flm2} and \ref{flm4} to \eqref{feqx2.5} completes the proof.
\end{proof}
\section{Finite element method}
In this section, we discuss semidiscrete Galerkin approximation keeping time variable continuous and prove
optimal error estimates for both state variable and feedback controller.\\
Given a regular triangulation  $\mathcal{T}_h$ of $\overline{\Omega},$
let $h_K=\text{diam}(K)$ for all $K\in \mathcal{T}_h$ and $h=\displaystyle\max_{ K\in \mathcal{T}_h} h_K$.\\
Set
$$V_h=\left\{v_h\in C^0(\overline{\Omega} ):\hspace{0.1cm} v_h\Big|_K \in \mathcal{P}_{1}(K) \quad \forall~\quad K\in \mathcal{T}_{h}\right\}.$$
The discrete weak formulation of the corresponding steady state solution is to seek some approximation of $u^\infty$ as $u^\infty_h\in V_h$ such that 
\begin{align}
\nu(\nabla u^\infty_h,\nabla \chi)+B\big(u^\infty_h;u^\infty_h,\chi\big)=(f^\infty,\chi) \quad \forall \quad \chi\in V_h.
\end{align}
Corresponding steady state solution for the discrete problem satisfies
\begin{align}
-\nu\Delta_hu^\infty_h&+u^\infty_h(\nabla u^\infty_h\cdot {\bf {1}})=f^\infty\quad \text{in} \quad \Omega\label{sd1}\\
\frac{\partial  u^\infty_h}{\partial n}&=0 \quad \text{on} \quad \partial\Omega\label{sd2},
\end{align}
where {\it{discrete Laplacian}} $-\Delta_h u^\infty_h:V_h\longrightarrow V_h$ is defined by
\begin{align}\label{sd3}
(-\Delta_h u^\infty_h,w_h)=(\nabla u^\infty_h,\nabla w_h)+\langle\frac{\partial u^\infty_h}{\partial n},w_h\rangle\qquad \forall~ u^\infty_h,~ w_h\in V_h.
\end{align}
Note that second term in \eqref{sd3} is zero, but we keep it to attach a meaning of {\it{discrete Laplacian}} in \eqref{feq4.1} for general nonhomogeneous boundary condition.
For $u^\infty_h=\sum_{j=1}^{N}\alpha_j\phi_j$ where $\{\phi_j\}_{j=1}^{N_h}$ are the basis functions for finite element space $V_h$, the discrete normal derivative $\frac{\partial u^\infty_h}{\partial n}$ is defined as 
$\frac{\partial u^\infty_h}{\partial n}= \sum_{j=1}^{N}  \alpha_j(\nabla \phi_j\cdot n)$.
In terms of a basis function $\{\phi_j\}_{j=1}^{N_h}$, the above analogue \eqref{sd3} of Green's formula defines $-\Delta_h u^\infty_h=\sum_{j=1}^{N_h}d_j\phi_j$ by
$$\sum_{j=1}^{N_h}d_j(\phi_j,\phi_k)=(\nabla u^\infty_h,\nabla \phi_k)+\langle\frac{\partial u^\infty_h}{\partial n},\phi_k\rangle \quad \text{for} \quad k=1,\hdots,N_h.$$
See \cite{thomee} for more details.
As in continuous case similarly the following bound holds for $u^\infty_h$ under the assumption ${\bf (A1)}$.
\begin{lemma}
Under the assumption ${\bf (A1)}$, there exists a solution $u^\infty_h\in V_h/\mathbb{R}$
to the problem \eqref{sd1}-\eqref{sd2} satisfying
\begin{equation}\label{sd4}
\norm{\nabla u^\infty_h}\leq \frac{\nu}{4N}.
\end{equation}
Moreover, when $\Delta_h u^\infty_h\in V_h$ and $f^\infty\in L^2$, then $\norm{\Delta_h u^\infty_h}\leq C(\nu,\norm{f^\infty})$.
\end{lemma}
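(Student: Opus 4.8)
The plan is to mirror the proof of Lemma \ref{key}, exploiting the crucial simplification that, unlike the continuous case, $V_h/\mathbb{R}$ is already finite dimensional, so Brouwer's fixed point theorem (Theorem \ref{fp}) applies \emph{directly} to the discrete map, with no passage to a Galerkin subspace and, in particular, none of the weak-limit or compact-embedding arguments used at the end of Lemma \ref{key}. First I would record the observation that underpins everything: since $V_h\subset H^1(\Omega)$, the trilinear form obeys $|B(v_h;z_h,\chi)|\le N\,\norm{\nabla v_h}\,\norm{\nabla z_h}\,\norm{\nabla \chi}$ with the \emph{very same} constant $N$ appearing in Assumption ${\bf (A1)}$ (the suprema defining $N_1,N_2$ are taken over $H^1/\mathbb{R}$, which contains $V_h/\mathbb{R}$). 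Consequently every inequality in Lemma \ref{key} transfers verbatim to the discrete setting.

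For the a priori estimate I would set $\chi=u^\infty_h$ in the discrete weak formulation to obtain $\nu\norm{\nabla u^\infty_h}^2=(f^\infty,u^\infty_h)-B\big(u^\infty_h;u^\infty_h,u^\infty_h\big)$, and then bound the right-hand side by $\big(\norm{f^\infty}_{-1}+N\norm{\nabla u^\infty_h}^2\big)\norm{\nabla u^\infty_h}$. Invoking ${\bf (A1)}$ exactly as in the continuous proof yields the factorisation $\big(\norm{\nabla u^\infty_h}-\tfrac{3\nu}{4N}\big)\big(\norm{\nabla u^\infty_h}-\tfrac{\nu}{4N}\big)\ge 0$, whence the estimate \eqref{sd4}. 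For existence I would define $F:V_h/\mathbb{R}\to V_h/\mathbb{R}$ as the Riesz representation, with respect to the inner product $(\nabla\cdot,\nabla\cdot)$, of the map $z_h\mapsto \nu(\nabla z_h,\nabla\cdot)+B\big(z_h;z_h,\cdot\big)-(f^\infty,\cdot)$, so that a zero of $F$ is precisely a solution of \eqref{sd1}-\eqref{sd2}. The same computation as above gives $(F(z_h),z_h)\ge \norm{\nabla z_h}\big(\nu\norm{\nabla z_h}-N\norm{\nabla z_h}^2-\norm{f^\infty}_{-1}\big)$, which is strictly positive on the sphere $\norm{\nabla z_h}=R$ for $R$ in the open interval between the two roots of the associated quadratic; Theorem \ref{fp} then produces a zero inside that ball, and the a priori inequality of the previous step sharpens the bound to $R=\tfrac{\nu}{4N}$. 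The finite dimensionality of $V_h/\mathbb{R}$ makes both the continuity of $F$ and the applicability of Brouwer immediate, which is where the proof is shorter than in the continuous case.

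Finally, for the second-moment bound I would test the discrete equation with $-\Delta_h u^\infty_h\in V_h$; because the boundary term in \eqref{sd3} vanishes under the homogeneous Neumann condition, this gives $\nu\norm{\Delta_h u^\infty_h}^2=(f^\infty,-\Delta_h u^\infty_h)+B\big(u^\infty_h;u^\infty_h,\Delta_h u^\infty_h\big)$. A Young's inequality handles the forcing term, and the nonlinear term is controlled by $\norm{u^\infty_h}_{L^4}\norm{\nabla u^\infty_h}_{L^4}\norm{\Delta_h u^\infty_h}$; a Gagliardo--Nirenberg estimate together with the already-established bounds on $\norm{u^\infty_h}$ (via Friedrichs from \eqref{sd4}) and $\norm{\nabla u^\infty_h}$ then lets one absorb $\norm{\Delta_h u^\infty_h}^2$ on the left, exactly as in the continuous computation, yielding $\norm{\Delta_h u^\infty_h}\le C(\nu,\norm{f^\infty})$. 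I expect this last step to be the main obstacle: since $u^\infty_h$ is only piecewise linear, the elementwise Laplacian is zero and one cannot invoke the Gagliardo--Nirenberg inequality of Section~2 directly. One must instead use its \emph{discrete} analogue, controlling $\norm{\nabla u^\infty_h}_{L^4}$ by $\norm{\Delta_h u^\infty_h}$ rather than by $\norm{\Delta u^\infty_h}$, and verify that the resulting constant is independent of $h$ (in particular, avoiding the $h^{-1/2}$ factor that a naive inverse inequality would introduce). This is precisely the point at which the replacement of $\Delta$ by the discrete Laplacian $\Delta_h$ earns its keep.
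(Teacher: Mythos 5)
Your proposal is correct and follows essentially the same route as the paper, which for this lemma gives no separate argument and simply asserts that the bounds hold ``as in the continuous case'' (Lemma \ref{key}); your execution --- testing with $u^\infty_h$ to get the quadratic factorisation and the bound \eqref{sd4}, applying Brouwer directly on the finite-dimensional space $V_h/\mathbb{R}$ without the Galerkin subspace and weak-limit machinery, and testing with $-\Delta_h u^\infty_h$ for the second estimate --- is exactly what that deferral entails, and your choice of $R$ strictly between the two roots followed by the a priori dichotomy is in fact a cleaner way to land on $R=\tfrac{\nu}{4N}$ than the continuous write-up. Your closing observation that the Gagliardo--Nirenberg step needs a \emph{discrete} analogue controlling $\norm{\nabla u^\infty_h}_{L^4}$ by $\norm{\Delta_h u^\infty_h}$ with an $h$-independent constant (since the elementwise Laplacian of a piecewise linear function vanishes, so the inequality of Section~2 cannot be invoked verbatim) correctly identifies the one point at which ``similarly'' is not automatic and which the paper passes over in silence.
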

The semidiscrete approximation corresponding to the problem  \eqref{feq1.10} is to seek $w_h(t)=w_h(\cdot,t)\in V_h$ such that
\begin{align}
(w_{ht},\chi)+&\nu(\nabla w_h,\nabla \chi)+B\big(u^\infty_h;w_h,\chi\big)+B\big(w_h;u^\infty_h,\chi\big)+B\big(w_h;w_h,\chi\big)\notag\\
&\qquad+\int_{\partial \Omega}\Big((c_0+\nu+2|u^\infty_h|)w_h+\frac{2}{9c_0}w_h^3\Big)\chi\; d\Gamma
=0,\quad \forall~ \chi\in V_h\label{feqn3.1}
\end{align}
with $w_h(0)=P_h u_0-u^\infty_h=w_{0h}$ (say), an approximation of $w_0,$
where, $P_hu_0$ is the $H^1$ projection of $u_0$ onto $V_h$ such that
\begin{equation}
\norm{u_0-u_{0h}}_j\leq Ch^{2-j}\norm{u_0}_2\quad j=0,1.
\end{equation}
Since $V_h$ is finite dimensional, \eqref{feqn3.1} leads to a system of nonlinear
ODEs. Therefore due to Picard's theorem there exists a unique solution $w_h$ locally i.e for $t\in (0,t_n)$. Moreover applying Lemmas \ref{flm3.1}-\ref{flm3.2} from below, the continuation arguments
yields existence of a unique solution for all $t>0$.
Following stabilization results hold for the semidiscrete solution in a similar approach as in continuous case.
\begin{lemma}\label{flm3.1}
Assume that  assumption	$\bf(A1)$ is satisfied and discrete steady state solution $u^\infty_h$ of \eqref{sd1}-\eqref{sd2} satisfy \eqref{sd4}. Let $w_0\in L^2(\Omega)$. Then,  there holds
	\begin{align*}
\norm{w_h(t)}^2&+\delta e^{-2\alpha t}\int_{0}^{t}e^{2\alpha s}\Big(\norm{\nabla w_h(s)}^2+\norm{w_h(s)}^2_{L^2(\partial\Omega)}+\frac{2}{9c_0}\norm{w_h(s)}^4_{L^4(\partial\Omega)} \Big)\;ds\\
&+2e^{-2\alpha t}\int_{0}^{t}e^{2\alpha s}(\int_{\partial\Omega}|u^\infty_h|w_h(s)^2\; d\Gamma) \; ds\leq e^{-2\alpha t}\norm{w_0}^2,
	\end{align*}
	where  $\delta$ and $\beta$ are same as in the continuous case.
\end{lemma}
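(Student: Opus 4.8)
The plan is to transcribe the proof of Lemma \ref{flm1} line by line, replacing $w$ by $w_h$ and $u^\infty$ by $u^\infty_h$, and to verify that each manipulation remains legitimate once we are confined to the finite-dimensional subspace $V_h\subset H^1(\Omega)$. First I would set the test function $\chi=e^{2\alpha t}w_h$ in the semidiscrete weak formulation \eqref{feqn3.1}. This choice is admissible precisely because $w_h(t)\in V_h$ and $e^{2\alpha t}$ is a scalar, so $e^{2\alpha t}w_h\in V_h$; this is the one place where we may only test against $V_h$, not against arbitrary $H^1$ functions, and it causes no trouble since $w_h$ itself lies in $V_h$. Collecting terms gives the discrete energy identity, the exact analogue of \eqref{feq1.11}:
\begin{align*}
\frac{d}{dt}\norm{e^{\alpha t}w_h}^2-2\alpha\norm{e^{\alpha t}w_h}^2&+2\nu\norm{e^{\alpha t}\nabla w_h}^2+2e^{2\alpha t}\int_{\partial\Omega}\Big((c_0+\nu+2|u^\infty_h|)w_h^2+\tfrac{2}{9c_0}w_h^4\Big)\,d\Gamma\\
&=-2e^{2\alpha t}\Big(B\big(u^\infty_h;w_h,w_h\big)+B\big(w_h;u^\infty_h,w_h\big)\Big)-2e^{2\alpha t}B\big(w_h;w_h,w_h\big).
\end{align*}

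Next I would dispose of the two trilinear terms exactly as in the continuous case. The algebraic identities that generate the clean boundary contributions, namely $B(w_h;w_h,w_h)=\tfrac13\sum_{j}\int_{\partial\Omega}w_h^3\,\nu_j\,d\Gamma$ and $B(u^\infty_h;w_h,w_h)+B(w_h;u^\infty_h,w_h)=\tfrac12 B(w_h;u^\infty_h,w_h)+\tfrac12\sum_j\int_{\partial\Omega}|u^\infty_h|w_h^2\,\nu_j\,d\Gamma$, are consequences of the divergence theorem, which holds for every $H^1(\Omega)$ function and hence for the $C^0$ piecewise-linear elements of $V_h$. I would then estimate $\tfrac12 B(w_h;u^\infty_h,w_h)\le\tfrac12 N\norm{\nabla u^\infty_h}\tnorm{w_h}^2$ via the definition of $N$, and substitute the discrete steady-state bound \eqref{sd4}, $\norm{\nabla u^\infty_h}\le \nu/4N$, to produce the coefficient $\nu/8$ in front of $\norm{\nabla w_h}^2+\norm{w_h}^2_{L^2(\partial\Omega)}$, reproducing \eqref{feq1.12}, while the cubic term yields \eqref{feq1.13} after Young's inequality.

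The remaining steps are identical to Lemma \ref{flm1}. Applying Friedrichs's inequality \eqref{feq1}, which is valid on all of $H^1$ and in particular on $V_h$, to the term $-2\alpha e^{2\alpha t}\norm{w_h}^2$ gives the discrete form of \eqref{feq1.14}; assembling the estimates produces the counterpart of \eqref{feq1.15} with $w$, $u^\infty$ replaced by $w_h$, $u^\infty_h$. Choosing $\alpha$ as in \eqref{decay} renders every coefficient on the left nonnegative, with the same $\delta=\min\{(\tfrac{7\nu}{4}-2\alpha C_F),(c_0+\tfrac{7\nu}{4}-2\alpha C_F)\}>0$ as in Lemma \ref{flm1}. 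Integrating from $0$ to $t$ and multiplying by $e^{-2\alpha t}$ then yields the stated inequality, the initial contribution being $\norm{w_h(0)}^2=\norm{w_{0h}}^2$, which is controlled by $\norm{w_0}^2$ through the stability of the discrete initial data and matches the right-hand side.

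I do not anticipate a genuine obstacle: the argument is a structural transcription of Lemma \ref{flm1}, and the only points requiring attention are verifications rather than new estimates. These are (i) that the chosen test function $e^{2\alpha t}w_h$ stays inside $V_h$ so that \eqref{feqn3.1} applies, and (ii) that the integration-by-parts identities generating the boundary terms, together with the Friedrichs and trace inequalities, all follow from $V_h\subset H^1(\Omega)$ and therefore transfer verbatim. A helpful structural feature is that the Neumann feedback enters \eqref{feqn3.1} directly as a boundary integral rather than through a substituted flux condition, so no discrete normal-derivative artifacts arise and the passage from the continuous to the semidiscrete estimate is seamless.
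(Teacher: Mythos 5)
Your proposal is correct and is exactly the route the paper takes: the paper gives no separate proof of Lemma \ref{flm3.1}, stating only that the stabilization results for the semidiscrete solution "hold in a similar approach as in the continuous case," i.e.\ by repeating the proof of Lemma \ref{flm1} with $\chi=e^{2\alpha t}w_h\in V_h$ as test function and $u^\infty$ replaced by $u^\infty_h$ via \eqref{sd4}. Your added verifications (admissibility of the test function in $V_h$, validity of the divergence-theorem identities and of Friedrichs's inequality on $V_h\subset H^1(\Omega)$) are precisely the points that make the transcription legitimate, and your remark that the right-hand side really involves $\norm{w_{0h}}^2$ rather than $\norm{w_0}^2$ is consistent with the paper's own (slightly loose) statement.
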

Now the corresponding semidiscrete problem satisfies
\begin{align}
& w_{ht}-\nu \Delta_h w_h+u^\infty_h(\nabla w_h\cdot {\bf {1}})+w_h(\nabla u^\infty_h\cdot {\bf {1}})+w_h(\nabla w_h\cdot {\bf {1}})=0\quad \text{in} \quad\Omega\times(0,\infty)\label{feq4.1}\\
&\frac{\partial  w_h}{\partial n}(x,t)=-\frac{1}{\nu}\Big((c_0+\nu+2|u^\infty_h|)w_h+\frac{2}{9c_0}w^3_h\Big)=:v_{0h}(t) \quad \text{on} \quad\partial\Omega\times(0,\infty)\label{feq4.2},\\
&w_h(0)=w_{0h} \quad \text{in} \quad\Omega\label{feq4.3}.
\end{align}
\begin{lemma}\label{flm3.2}
Assume that  assumption 	$\bf(A1)$ is satisfied and discrete steady state solution $u^\infty_h$ of \eqref{sd1}-\eqref{sd2} satisfy \eqref{sd4}. Let $w_0\in H^1(\Omega).$ Then, there holds 
	\begin{align*}
	\big(\norm{\nabla w_h(t)}^2&+\int_{\partial\Omega}\frac{(c_0+\nu+2|u^\infty_h|)}{\nu}w_h(t)^2\;d\Gamma+\frac{1}{9\nu c_0}\norm{w_h(t)}^4_{L^4(\partial\Omega)}\big)+\nu e^{-2\alpha t}\int_{0}^{t}\norm{e^{\alpha s}\Delta_h w_h(s)}^2\; ds\\&\leq Ce^{C}e^{-2\alpha t}.
	\end{align*}	
\end{lemma}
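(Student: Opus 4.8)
The plan is to reproduce the argument of Lemma \ref{flm2} essentially line by line, replacing the continuous objects $(\Delta, u^\infty, w)$ by their semidiscrete counterparts $(\Delta_h, u^\infty_h, w_h)$ and replacing Lemma \ref{flm1} by its discrete version Lemma \ref{flm3.1}. Concretely, I would form the $L^2$-inner product of the strong semidiscrete equation \eqref{feq4.1} with the admissible test function $-e^{2\alpha t}\Delta_h w_h\in V_h$. The time-derivative term is handled through the discrete Green's formula \eqref{sd3}: writing $(w_{ht},-\Delta_h w_h)=(\nabla w_h,\nabla w_{ht})+\langle \partial w_h/\partial n,w_{ht}\rangle$ and inserting the feedback law \eqref{feq4.2} for $\partial w_h/\partial n$ produces $\tfrac{1}{2}\frac{d}{dt}\norm{\nabla w_h}^2$ together with the boundary contribution carrying the factor $\tfrac1\nu\big((c_0+\nu+2|u^\infty_h|)w_h+\tfrac{2}{9c_0}w_h^3\big)w_{ht}$, while the diffusion term yields $\nu\norm{\Delta_h w_h}^2$. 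This gives the exact discrete analogue of \eqref{feq1.21}.

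Next I would rewrite that boundary integral as a time derivative, exactly as in Lemma \ref{flm2}: by the chain rule the $w_h w_{ht}$ and $w_h^3 w_{ht}$ factors assemble into $\frac{d}{dt}$ of $\int_{\partial\Omega}\tfrac{c_0+\nu+2|u^\infty_h|}{\nu}w_h^2\,d\Gamma+\tfrac{1}{9\nu c_0}\norm{w_h}^4_{L^4(\partial\Omega)}$ (up to the lower-order $-2\alpha e^{2\alpha t}(\cdots)$ terms that move to the right). The three trilinear terms $B(u^\infty_h;w_h,\Delta_h w_h)$, $B(w_h;u^\infty_h,\Delta_h w_h)$ and $B(w_h;w_h,\Delta_h w_h)$ are then estimated by Hölder's inequality as in the continuous proof, e.g.\ $\leq \norm{u^\infty_h}_{L^4}\norm{\nabla w_h}_{L^4}\norm{\Delta_h w_h}+\cdots$, followed by Gagliardo--Nirenberg and Young, using the uniform bounds $\norm{\nabla u^\infty_h}\leq \nu/4N$ from \eqref{sd4} and $\norm{\Delta_h u^\infty_h}\leq C(\nu,\norm{f^\infty})$. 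Absorbing every $\norm{\Delta_h w_h}^2$ contribution into the $2\nu\norm{e^{\alpha t}\Delta_h w_h}^2$ on the left and choosing $\alpha$ as in \eqref{decay} produces the discrete analogue of \eqref{feq1.22}. Finally I would integrate from $0$ to $t$ and apply Gr\"onwall's inequality, invoking Lemma \ref{flm3.1} to bound the exponent integral $\int_0^t\norm{w_h}^2\norm{\nabla w_h}^2\,ds$ (and the remaining lower-order integrals) by a constant independent of $t$ and $h$; multiplying by $e^{-2\alpha t}$ then yields the stated $Ce^{C}e^{-2\alpha t}$ estimate.

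The point requiring genuine care, beyond transcribing the continuous computation, is the treatment of the discrete Laplacian in the nonlinear estimates. Since $w_h$ is $C^0$ piecewise linear, $\Delta w_h$ vanishes elementwise, so $\Delta_h w_h$ is a purely discrete quantity and the Gagliardo--Nirenberg and Agmon inequalities used above must be replaced by discrete versions in which $\norm{\Delta w_h}$ is supplanted by $\norm{\Delta_h w_h}$ (for instance $\norm{\nabla w_h}_{L^4}\leq C(\norm{w_h}^{1/4}\norm{\Delta_h w_h}^{3/4}+\norm{w_h})$). I would borrow these discrete inequalities, along with the uniform bound on $\norm{\Delta_h u^\infty_h}$, from the two-dimensional constant-steady-state analysis in \cite{skakp2}. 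The other detail to verify before the continuous argument transfers verbatim is that the boundary term generated by \eqref{sd3} assembles consistently into the boundary-energy time derivative, i.e.\ that the feedback law \eqref{feq4.2} is faithfully encoded in the discrete Green's formula; this is precisely the step where the semidiscrete and continuous arguments could diverge, and so is the one I expect to be the main obstacle.
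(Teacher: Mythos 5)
Your proposal follows the same route as the paper: test the strong semidiscrete equation \eqref{feq4.1} with $-e^{2\alpha t}\Delta_h w_h$, use the discrete Green's formula \eqref{sd3} together with the feedback law \eqref{feq4.2} to convert the boundary contribution into the time derivative of the boundary energy, bound the trilinear terms, and close with Gr\"onwall's inequality and Lemma \ref{flm3.1} (with $\alpha=0$ for the exponent integral). Your additional remarks about needing discrete Gagliardo--Nirenberg-type inequalities with $\norm{\Delta_h w_h}$ in place of $\norm{\Delta w_h}$ are a fair point of care that the paper glosses over, but they do not change the argument.
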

\begin{proof}
	Forming an $L^2$- inner product between \eqref{feq4.1} and $-e^{2\alpha t}\Delta_h w_h,$ we obtain
	\begin{align}
	\frac{d}{dt}\norm{e^{\alpha t}\nabla w_h}^2-&2\alpha e^{2\alpha t}\norm{\nabla w_h}^2+2\nu\norm{e^{\alpha t}\Delta_h w_h}^2+\frac{2}{\nu}\int_{\partial\Omega}e^{2\alpha t}\Big((c_0+\nu+2|u^\infty_h|)w_h+\frac{2}{9c_0}w_h^3\Big)w_{ht} d\Gamma\notag\\
	&=2e^{2\alpha t}(u^\infty_h(\nabla w_h\cdot {\bf{1}})+w_h(\nabla u^\infty_h\cdot {\bf{1}}),\Delta_h w_h)+2e^{2\alpha t}B(w_h;w_h,\Delta_h w_h)\label{feq4.21}.
	\end{align}
	Finally bounding the right hand side term, from \eqref{feq4.21}, it follows that
	\begin{align*}
	\frac{d}{dt}\Big(e^{2\alpha t}\big(\norm{\nabla w_h}^2&+\int_{\partial\Omega}\frac{(c_0+\nu+2|u^\infty_h|)}{\nu}\norm{w_h}^2 \; d\Gamma+\frac{1}{9\nu c_0}\norm{w_h}^4_{L^4(\partial\Omega)}\big)\Big)+\nu\norm{e^{\alpha t}\Delta_h w_h}^2\\
	&\leq 2\alpha e^{2\alpha t}\Big(\int_{\partial\Omega}\frac{(c_0+\nu+2|u^\infty_h|)}{\nu}w_h^2 \; d\Gamma+
	\frac{1}{9\nu c_0}\norm{w_h}^4_{L^4(\partial\Omega)}\Big)+Ce^{2\alpha t}\norm{\nabla w_h}^2\\
	&\qquad +Ce^{2\alpha t}\norm{w_h}^{2}+Ce^{2\alpha t}\norm{w_h}^2\norm{\nabla w_h}^2
	+Ce^{2\alpha t}\norm{w_h}^2\norm{\nabla w_h}^4.
	\end{align*}
	Integrate from $0$ to $t,$ and then use the Gr\"onwall's inequality with Lemma \ref{flm3.1} to obtain
	\begin{align*}
	e^{2\alpha t}\big(\norm{\nabla w_h(t)}^2&+\int_{\partial\Omega}\frac{(c_0+\nu+2|u^\infty_h|)}{\nu}w_h(t)^2\; d\Gamma+\frac{1}{9\nu c_0}\norm{w_h(t)}^4_{L^4(\partial\Omega)}\big)+\nu\int_{0}^{t}\norm{e^{\alpha s}\Delta_h w_h(s)}^2\; ds\\&\leq C(\norm{w_0}^2_1+\norm{w_0}^2_{L^2(\partial\Omega)}+\norm{w_0}^4_{L^4(\partial\Omega)})\exp\Big(C\int_{0}^{t}\norm{w_h(s)}^2\norm{\nabla w_h(s)}^2\big) ds\Big).
	\end{align*}
	Apply Lemma \ref{flm3.1} for $\alpha=0$ to the integral term under the exponential sign, and then multiply the resulting inequality by $e^{-2\alpha t}$ to complete the rest of the proof.
\end{proof}
\begin{lemma}\label{flm3.3}
Assume that assumption 	$\bf(A1)$ is satisfied and discrete steady state solution $u^\infty_h$ of \eqref{sd1}-\eqref{sd2} satisfy \eqref{sd4}. Let $w_0\in H^1(\Omega)$. Then,  there holds
	\begin{align*}
	\big(\nu \norm{\nabla w_h(t)}^2&+\int_{\partial\Omega}(c_0+\nu+2|u^\infty_h|)w_h(t)^2\;d\Gamma+\frac{1}{9c_0}\norm{w_h(t)}^4_{L^4(\partial\Omega)}\big)\\
	&\quad+e^{-2\alpha t}\int_{0}^{t}e^{2\alpha s}\norm{w_{ht}(s)}^2 ds\leq Ce^{C}e^{-2\alpha t}.
	\end{align*}
\end{lemma}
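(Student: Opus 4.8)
The plan is to carry the argument of Lemma \ref{flm3} over to the discrete level, the essential simplification being that $w_{ht}$ now belongs to $V_h$ and is therefore an admissible test function in \eqref{feqn3.1}, so no extra regularity is needed. First I would set $\chi=e^{2\alpha t}w_{ht}$ in \eqref{feqn3.1} and multiply by $2$. Moving the factor $e^{2\alpha t}$ inside the time derivative through the product rule produces, in analogy with \eqref{feq1.31}, the discrete energy identity
\begin{align*}
2\norm{e^{\alpha t}w_{ht}}^2+\nu\frac{d}{dt}\norm{e^{\alpha t}\nabla w_h}^2&-2\nu\alpha\norm{e^{\alpha t}\nabla w_h}^2+2\int_{\partial\Omega}\Big((c_0+\nu+2|u^\infty_h|)w_h+\frac{2}{9c_0}w_h^3\Big)e^{2\alpha t}w_{ht}\,d\Gamma\\
&=-2e^{2\alpha t}\Big(B\big(u^\infty_h;w_h,w_{ht}\big)+B\big(w_h;u^\infty_h,w_{ht}\big)\Big)-2e^{2\alpha t}B\big(w_h;w_h,w_{ht}\big).
\end{align*}
Since $u^\infty_h$ is independent of $t$, the boundary integral on the left is exactly the total time derivative of $e^{2\alpha t}\big(\int_{\partial\Omega}(c_0+\nu+2|u^\infty_h|)w_h^2\,d\Gamma+\tfrac{1}{9c_0}\norm{w_h}^4_{L^4(\partial\Omega)}\big)$ minus its $2\alpha$-correction, precisely as in Lemma \ref{flm2}; this is what assembles the weighted energy appearing in the statement.

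Next I would estimate the three trilinear terms on the right, mirroring the bounds in the proof of Lemma \ref{flm3}: by H\"older's inequality, the (discrete) Gagliardo--Nirenberg inequality with $\Delta_h w_h$ in place of $\Delta w$, and Young's inequality, each of $2e^{2\alpha t}\big(B(u^\infty_h;w_h,w_{ht})+B(w_h;u^\infty_h,w_{ht})\big)$ and $2e^{2\alpha t}B(w_h;w_h,w_{ht})$ is controlled by $\tfrac12 e^{2\alpha t}\norm{w_{ht}}^2$ plus lower-order terms, the weights $\norm{u^\infty_h}^2_{L^4}$ and $\norm{\nabla u^\infty_h}^2_{L^4}$ being absorbed into $C$ by the uniform bound \eqref{sd4} together with $\norm{\Delta_h u^\infty_h}\leq C(\nu,\norm{f^\infty})$. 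Absorbing the two $\tfrac12 e^{2\alpha t}\norm{w_{ht}}^2$ contributions into the dissipation $2\norm{e^{\alpha t}w_{ht}}^2$ on the left leaves a differential inequality of the form
\begin{align*}
&\frac{d}{dt}\Big(e^{2\alpha t}\big(\nu\norm{\nabla w_h}^2+\int_{\partial\Omega}(c_0+\nu+2|u^\infty_h|)w_h^2\,d\Gamma+\tfrac{1}{9c_0}\norm{w_h}^4_{L^4(\partial\Omega)}\big)\Big)+\norm{e^{\alpha t}w_{ht}}^2\\
&\qquad\leq Ce^{2\alpha t}\Big(\int_{\partial\Omega}(c_0+\nu+2|u^\infty_h|)w_h^2\,d\Gamma+\norm{w_h}^4_{L^4(\partial\Omega)}+\norm{w_h}^2+\norm{\nabla w_h}^2+\norm{\Delta_h w_h}^2+\norm{w_h}^2\norm{\nabla w_h}^2+\norm{w_h}^2\norm{\nabla w_h}^4\Big).
\end{align*}

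Finally I would integrate from $0$ to $t$ and apply Gr\"onwall's inequality. To recover the initial value I would test \eqref{feqn3.1} at $t=0$ with $w_{ht}(0)\in V_h$, obtaining $\norm{w_{ht}(0)}^2\leq C\big(\norm{\nabla w_{0h}}^2+\norm{\Delta_h w_{0h}}^2+\norm{w_{0h}}^2\norm{\nabla w_{0h}}^4\big)$, bounded by the data exactly as the $\norm{w_t(0)}$ computation in Lemma \ref{flm4}. The integrated right-hand side is then controlled uniformly in $t$ by Lemmas \ref{flm3.1} and \ref{flm3.2}: the products $\int_0^t\norm{w_h}^2\norm{\nabla w_h}^2\,ds$ entering the Gr\"onwall exponent are bounded by the discrete analogue of Remark \ref{frm2.1} (Lemma \ref{flm3.1} with $\alpha=0$), while $\int_0^t e^{2\alpha s}\norm{\Delta_h w_h}^2\,ds$ is bounded by Lemma \ref{flm3.2}; multiplying the resulting estimate by $e^{-2\alpha t}$ gives the claimed bound $Ce^{C}e^{-2\alpha t}$. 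I expect the main obstacle to be exactly this uniform-in-time control of the Gr\"onwall factor: one must check that every nonlinear term has been arranged so that, after invoking the stabilization estimates of Lemmas \ref{flm3.1}--\ref{flm3.2}, the accumulated exponent stays bounded independently of $t$, so that the decay rate $\alpha$ survives in the final estimate. The discrete Gagliardo--Nirenberg inequality involving $\Delta_h w_h$ enters harmlessly here, since $\norm{\Delta_h w_h}$ is already square-integrable in time by Lemma \ref{flm3.2}.
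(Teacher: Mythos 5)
Your proposal is correct and follows essentially the same route as the paper, whose proof of this lemma consists precisely of the remark that it ``follows as in the continuous case, namely Lemma \ref{flm3}'': test with $e^{2\alpha t}w_{ht}$, rewrite the boundary integral as a total time derivative, bound the trilinear terms, and close with Gr\"onwall using Lemmas \ref{flm3.1}--\ref{flm3.2}. The only superfluous step is your computation of $\norm{w_{ht}(0)}$, which is not needed here (it belongs to the proof of Lemma \ref{flm3.4}) since $\norm{w_{ht}}^2$ enters only under the time integral and the initial data after integration involve just $\norm{\nabla w_{0h}}$ and boundary norms of $w_{0h}$.
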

\begin{proof}
	Proof follows as in continuous case, namely; Lemma \ref{flm3}.
\end{proof}
\begin{lemma}\label{flm3.4}
	Assume that assumption 	$\bf(A1)$ is satisfied and discrete steady state solution $u^\infty_h$ of \eqref{sd1}-\eqref{sd2} satisfy \eqref{sd4}. Let $w_0\in H^2(\Omega)$. Then, we get
	\begin{align*}
	\norm{w_{ht}(t)}^2+&\nu e^{-2\alpha t}\int_{0}^{t}e^{2\alpha s}\norm{\nabla w_{ht}(s)}^2 ds+2e^{-2\alpha t}\int_{0}^{t}e^{2\alpha s}\Big(\int_{\partial\Omega}(c_0+\nu+2|u^\infty_h|)w_{ht}(s)^2\;d\Gamma\\
	&\hspace{3cm}+\frac{2}{3c_0}\norm{w_h(s)w_{ht}(s)}^2_{L^2(\partial\Omega)}\Big)\; ds\leq Ce^{C}e^{-2\alpha t}.
	\end{align*}
\end{lemma}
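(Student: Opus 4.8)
The plan is to transcribe the proof of the continuous Lemma~\ref{flm4}, with the Laplacian replaced throughout by the discrete Laplacian $\Delta_h$ defined in \eqref{sd3} and the steady state $u^\infty$ replaced by its discrete counterpart $u^\infty_h$. First I would differentiate the semidiscrete equation \eqref{feq4.1} with respect to $t$ and take the $L^2$-inner product with $e^{2\alpha t}w_{ht}$. Using \eqref{sd3} to transfer the diffusion term to the boundary and inserting the feedback law \eqref{feq4.2}, this yields the semidiscrete analogue of \eqref{feq2.1},
\begin{align*}
\frac{d}{dt}\big(\norm{e^{\alpha t}w_{ht}}^2\big)-2\alpha\norm{e^{\alpha t}w_{ht}}^2+2\nu\norm{e^{\alpha t}\nabla w_{ht}}^2+2\int_{\partial\Omega}\big((c_0+\nu+2|u^\infty_h|)w_{ht}^2+\tfrac{2}{3c_0}w_h^2w_{ht}^2\big)e^{2\alpha t}\,d\Gamma=R,
\end{align*}
where $R$ collects the differentiated trilinear terms $-2e^{2\alpha t}\big(B(w_{ht};w_h,w_{ht})+B(w_h;w_{ht},w_{ht})\big)$ together with the linearized contribution $-2e^{2\alpha t}\big(u^\infty_h(\nabla w_{ht}\cdot{\bf{1}})+w_{ht}(\nabla u^\infty_h\cdot{\bf{1}}),w_{ht}\big)$.

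Next I would bound $R$ exactly as in the continuous proof: apply the Gagliardo--Nirenberg inequality to the $L^4$-factors, then Young's inequality to absorb the two resulting $\tfrac{\nu}{2}\norm{e^{\alpha t}\nabla w_{ht}}^2$ contributions into the $2\nu$-term on the left (leaving $\nu\norm{e^{\alpha t}\nabla w_{ht}}^2$), and use the discrete steady-state bounds \eqref{sd4} and $\norm{\Delta_h u^\infty_h}\leq C$ in place of the bounds on $u^\infty$. This produces the discrete version of the differential inequality \eqref{feq2.2}, with every norm of $w,u^\infty$ replaced by the corresponding norm of $w_h,u^\infty_h$.

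It then remains to estimate the initial datum and integrate. Taking the inner product of \eqref{feq4.1} with $w_{ht}$ at $t=0$ gives $\norm{w_{ht}(0)}^2\leq C\big(\norm{\nabla w_{0h}}^2+\norm{\Delta_h w_{0h}}^2+\norm{w_{0h}}^2\norm{\nabla w_{0h}}^4\big)$; bounding $\norm{\Delta_h w_{0h}}$ uniformly in $h$ is precisely where the strengthened hypothesis $w_0\in H^2(\Omega)$ (rather than $H^1$ as in Lemma~\ref{flm3.3}) enters, through the $H^2$-regularity of $u_0$, the projection estimate for $P_h u_0$, and the bound $\norm{\Delta_h u^\infty_h}\leq C$. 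Finally, integrating the differential inequality from $0$ to $t$, invoking the already-established Lemmas~\ref{flm3.1}--\ref{flm3.3} to control the integrals such as $\int_0^t\norm{w_h}^2\norm{\nabla w_h}^2\,ds$ appearing under the Gr\"onwall exponential, and multiplying by $e^{-2\alpha t}$, delivers the stated exponentially decaying estimate. The main obstacle is the initial-value bound $\norm{w_{ht}(0)}$: the stability of the discrete Laplacian on the projected data $w_{0h}$ must be established uniformly in $h$, which is the only genuinely discrete ingredient, whereas every other step is a routine transcription of the continuous argument.
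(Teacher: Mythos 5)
Your proposal follows essentially the same route as the paper: differentiate the semidiscrete equation \eqref{feq4.1} in $t$, test with $e^{2\alpha t}w_{ht}$, bound the trilinear and linearized terms exactly as in the continuous Lemma \ref{flm4} to reach the discrete analogue of \eqref{feq2.2}, estimate $\norm{w_{ht}(0)}$ by testing \eqref{feq4.1} with $w_{ht}$ at $t=0$, and then integrate and invoke Lemmas \ref{flm3.1}--\ref{flm3.3}. Your additional remark that the hypothesis $w_0\in H^2(\Omega)$ is what makes $\norm{\Delta_h w_{0h}}$ bounded uniformly in $h$ is a correct and useful observation that the paper leaves implicit.
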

\begin{proof}
	Differentiating \eqref{feq4.1} with respect to t and then taking inner product with $e^{2\alpha t}w_{ht},$ we obtain 
	\begin{align}
	\frac{d}{dt}\big(\norm{e^{\alpha t}w_{ht}}^2\big)-&2\alpha \norm{e^{\alpha t}w_{ht}}^2+2\nu\norm{e^{\alpha t}\nabla w_{ht}}^2+2\int_{\partial\Omega}\big((c_0+\nu+2|u^\infty_h|)w_{ht}^2+\frac{2}{3c_0}w_h^2w_{ht}^2\big)e^{2\alpha t}\;d\Gamma\notag\\
	&= -2e^{2\alpha t}\Big(B\big(w_{ht};w_h,w_{ht}\big)+B\big(w_h;w_{ht},w_{ht}\big)\Big)\notag\\
	&\qquad-2e^{2\alpha t}\Big(B\big(u^\infty_h;w_{ht},w_{ht}\big)+B\big(w_{ht};u^\infty_h,w_{ht}\big)\Big)\label{feq4.23}.
	\end{align}
Bounding the right hand side term of \eqref{feq4.23}, it follows that
	\begin{align}
	\frac{d}{dt}(\norm{e^{\alpha t}w_{ht}}^2)+&\nu \norm{e^{\alpha t}\nabla w_{ht}}^2+2e^{2\alpha t}
	\Big(\int_{\partial\Omega}(c_0+\nu+2|u^\infty_h|)w_{ht}^2\;d\Gamma+\frac{2}{3c_0}\norm{w_hw_{ht}}^2_{L^2(\partial\Omega)}\Big)\notag\\
	&\leq Ce^{2\alpha t}\Big(\norm{w_{ht}}^2+\norm{w_{ht}}^2\norm{ w_h}^2_1+\norm{w_h}^2\norm{\nabla w_h}^2\norm{w_{ht}}^2\notag\\
	&\hspace{2cm}+\norm{w_h}^2\norm{\nabla w_h}^2+\norm{w_{ht}}^2\norm{w_h}^4+\norm{w_h}^2\Big)\label{feq4.24}.
	\end{align}
	To obtain $\norm{w_{ht}(0)},$
	take the inner product between \eqref{feq4.1} and $w_{ht}$ to arrive at
	\begin{align*}
	\norm{w_{ht}(0)}^2\leq C\Big(\norm{\nabla w_{0h}}^2+\norm{\Delta_h w_{0h}}^2+\norm{w_{0h}}^2\norm{\nabla w_{0h}}^4\Big).
	\end{align*}
	Integrate the inequality \eqref{feq4.24} from $0$ to $t$ and then use Lemmas \ref{flm3.1}-\ref{flm3.3} to complete the proof.
\end{proof}
\begin{lemma}\label{flm3.5}
Assume that assumption 	$\bf(A1)$ is satisfied and discrete steady state solution $u^\infty_h$ of \eqref{sd1}-\eqref{sd2} satisfy \eqref{sd4}. Let $w_0\in H^3(\Omega).$ Then, we obtain
	\begin{align*}
	\norm{\nabla w_{ht}(t)}^2+&\big(\int_{\partial\Omega}(c_0+\nu+2|u^\infty_h|)w_{ht}(t)^2\;d\Gamma+\frac{2}{3c_0}\norm{w_h(t)w_{ht}(t)}^2_{L^2(\partial\Omega)}\big)\\
	&\qquad+\nu e^{-2\alpha t}\int_{0}^{t}e^{2\alpha s}\norm{\Delta_h w_{ht}(s)}^2 ds\leq Ce^{C}e^{-2\alpha t}.
	\end{align*}
\end{lemma}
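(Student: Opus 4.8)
The plan is to reproduce, at the discrete level, the argument used for the continuous counterpart Lemma~\ref{flm5}, systematically replacing $-\Delta$ by the discrete Laplacian $-\Delta_h$ of \eqref{sd3} and $u^\infty$ by $u^\infty_h$. First I would differentiate the semidiscrete equation \eqref{feq4.1} in time and form the $L^2$-inner product with $-e^{2\alpha t}\Delta_h w_{ht}$. Invoking the definition \eqref{sd3}, the diffusion term contributes $2\nu\norm{e^{\alpha t}\Delta_h w_{ht}}^2$ together with a boundary term generated by the time-differentiated feedback law \eqref{feq4.2}; since $u^\infty_h$ is time-independent, this boundary contribution reorganizes, exactly as in the passage to \eqref{feq2.5}, into $\tfrac{d}{dt}\int_{\partial\Omega}e^{2\alpha t}\big((c_0+\nu+2|u^\infty_h|)w_{ht}^2+\tfrac{2}{3c_0}w_h^2w_{ht}^2\big)\,d\Gamma$, a $-2\alpha$ multiple of the same quantity, and a lower-order boundary remainder of the form $C\int_{\partial\Omega}e^{2\alpha t}\big(w_{ht}^2+w_hw_{ht}^3+w_h^2w_{ht}^2\big)\,d\Gamma$. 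This produces the discrete analogue of \eqref{feq2.5}.

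Next I would bound the four trilinear terms $B(u^\infty_h;w_{ht},\Delta_h w_{ht})$, $B(w_{ht};u^\infty_h,\Delta_h w_{ht})$, $B(w_{ht};w_h,\Delta_h w_{ht})$ and $B(w_h;w_{ht},\Delta_h w_{ht})$ in the same way as in Lemma~\ref{flm5}: the Gagliardo--Nirenberg inequality controls $\norm{\nabla w_{ht}}_{L^4}$, $\norm{\nabla w_h}_{L^4}$ and $\norm{\nabla u^\infty_h}_{L^4}$, after which Young's inequality absorbs these into the $2\nu\norm{e^{\alpha t}\Delta_h w_{ht}}^2$ term, leaving $\nu\norm{e^{\alpha t}\Delta_h w_{ht}}^2$ on the left. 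One genuine subtlety here is that Gagliardo--Nirenberg and Agmon are stated for $H^2(\Omega)$, whereas for $w_{ht}\in V_h$ one has $\Delta w_{ht}\equiv 0$ elementwise; I would therefore use the \emph{discrete} form $\norm{\nabla v_h}_{L^4}\le C\big(\norm{v_h}^{1/4}\norm{\Delta_h v_h}^{3/4}+\norm{v_h}\big)$, obtained by applying the continuous inequality to the elliptic lift of $v_h$ together with the norm equivalence and an inverse estimate. The boundary remainder is dominated using the trace embedding \eqref{1.30} and the uniform bound \eqref{sd4} on $u^\infty_h$.

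Collecting all terms yields a differential inequality of exactly the shape preceding \eqref{feqx2.5}, whose right-hand side coefficients are controlled by $\norm{w_h}_2^2$, $\norm{w_h}^2\norm{\nabla w_h}^4$, $\norm{\nabla w_{ht}}^2$ and the corresponding $L^4(\partial\Omega)$ boundary quantities. Integrating from $0$ to $t$ and applying Gr\"onwall's inequality, the integrals under the exponential sign are bounded uniformly in $t$ by Lemmas~\ref{flm3.1}--\ref{flm3.4}, so the Gr\"onwall factor reduces to a fixed constant $e^{C}$; multiplying through by $e^{-2\alpha t}$ delivers the asserted exponential decay.

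The main obstacle, and the reason the hypothesis demands $w_0\in H^3(\Omega)$, is the control of the initial data $\norm{\nabla w_{ht}(0)}$, $\norm{w_{ht}(0)}_{L^2(\partial\Omega)}$ and $\norm{w_h(0)w_{ht}(0)}^2_{L^2(\partial\Omega)}$. In Lemma~\ref{flm5} these followed from differentiating the equation in $x_1,x_2$ and using continuous elliptic regularity, giving $\norm{\nabla w_t(0)}\le C\norm{w_0}_3$. At the discrete level I would instead express $w_{ht}(0)$ through the semidiscrete equation at $t=0$, and estimate $\norm{\nabla w_{ht}(0)}\le C\norm{w_{0h}}_3\le C\norm{w_0}_3$ using the stability of the projection defining $w_{0h}$, the bound on $\nabla u^\infty_h$, and the discrete Gagliardo--Nirenberg estimate; the two boundary terms then follow from \eqref{1.30} and H\"older's inequality as in the continuous case. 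The delicate point is that every occurrence of $\Delta_h w_{ht}(0)$ and of the discrete gradient at $t=0$ must be bounded purely through discrete norms and inverse estimates rather than through the continuous regularity available in Lemma~\ref{flm5}; once these initial bounds are secured, feeding them together with Lemmas~\ref{flm3.1}, \ref{flm3.2} and \ref{flm3.4} into the Gr\"onwall estimate completes the proof.
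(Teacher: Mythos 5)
Your proposal follows essentially the same route as the paper's proof: differentiate \eqref{feq4.1} in time, test with $-e^{2\alpha t}\Delta_h w_{ht}$, reorganize the boundary contribution from the time-differentiated feedback law into a total time derivative plus a lower-order remainder, absorb the four trilinear terms into $2\nu\norm{e^{\alpha t}\Delta_h w_{ht}}^2$, and close with Gr\"onwall's inequality together with Lemmas \ref{flm3.1}--\ref{flm3.4} and the bounds on the initial data $\norm{\nabla w_{ht}(0)}$, $\norm{w_{ht}(0)}_{L^2(\partial\Omega)}$, $\norm{w_h(0)w_{ht}(0)}^2_{L^2(\partial\Omega)}$. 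You are in fact more explicit than the paper on one point it glosses over with ``in a similar fashion as in continuous case,'' namely the need for a discrete Gagliardo--Nirenberg inequality expressed through $\Delta_h$ (since $\Delta v_h\equiv 0$ elementwise for piecewise linears); flagging and sketching that is a genuine improvement, not a deviation.
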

\begin{proof}
	Differentiate \eqref{feq4.1} with respect to $t$ and then take inner product with $-e^{2\alpha t}\Delta_h w_{ht}$ to obtain
	\begin{align}
	\frac{d}{dt}\norm{e^{\alpha t}\nabla w_{ht}}^2&-2\alpha\norm{e^{\alpha t}\nabla w_{ht}}^2+2\nu\norm{e^{\alpha t}\Delta_h w_{ht}}^2+\frac{d}{dt}\int_{\partial\Omega}e^{2\alpha t}\Big((c_0+\nu+2|u^\infty_h|)w_{ht}^2\notag\\
	&+\frac{2}{3c_0}w_h^2w_{ht}^2\Big) d\Gamma\leq 2e^{2\alpha t}\Big(B\big(u^\infty_h,w_{ht},\Delta_h w_{ht}\big)+B\big(w_{ht},u^\infty_h,\Delta_h w_{ht}\big)\Big)\notag\\
	&\hspace{3.5cm}+2e^{2\alpha t}B\big(w_{ht},w_h,\Delta_h w_{ht}\big)+2e^{2\alpha t}B\big(w_h;w_{ht},\Delta_h w_{ht}\big)\notag\\
	&\hspace{3.5cm}+C\int_{\partial\Omega}e^{2\alpha t}\Big(w_{ht}^2+w_hw_{ht}^3+w_h^2w_{ht}^2\Big)\;d\Gamma\label{feq4.25}.
	\end{align}
	Therefore in a similar fashion as in continuous case, from \eqref{feq4.25}, we arrive at
	\begin{align*}
	\frac{d}{dt}\Big(\norm{e^{\alpha t}\nabla w_{ht}}^2&+\int_{\partial\Omega}(c_0+\nu+2|u^\infty_h|)e^{2\alpha t}w_{ht}^2\;d\Gamma+\frac{2}{3c_0}\int_{\partial\Omega}e^{2\alpha t}w_h^2w_{ht}^2\;d\Gamma\Big)+\nu\norm{e^{\alpha t}\Delta_h w_{ht}}^2\\
	&\leq Ce^{2\alpha t}\norm{w_{ht}}^2\Big(\norm{w_h}^2_2+\norm{w_h}^2\norm{\nabla w_h}^4\Big)+Ce^{2\alpha t}\norm{\nabla w_{ht}}^2\Big(1+\norm{w_h}^2+\norm{\Delta_h w_h}^2\Big)\\
	&\qquad+\int_{\partial\Omega}e^{2\alpha t}\Big(w_{ht}^2+w_h^2w_{ht}^2\Big)\;d\Gamma+Ce^{2\alpha t}\Big(\norm{w_{ht}}^4+\norm{\nabla w_{ht}}^4\Big).
	\end{align*}
	Integrate the above inequality from $0$ to $t$ and then apply the Gr\"onwall's inequality with Lemmas \ref{flm3.1}-\ref{flm3.3}, \ref{flm3.4} to obtain
	\begin{align*}
	&\Big(\norm{e^{\alpha t}\nabla w_{ht}(t)}^2+\int_{\partial\Omega}(c_0+\nu+2|u^\infty_h|)e^{2\alpha t}w_{ht}(t)^2\;d\Gamma+\frac{2}{3c_0}\int_{\partial\Omega}e^{2\alpha t}w_h(t)^2w_{ht}(t)^2\;ds\Big)\\
	&\qquad+\nu \int_{0}^{t}\norm{e^{\alpha s}\Delta_h w_{ht}(s)}^2 ds \leq C\Big(\norm{\nabla w_{ht}(0)}^2+\norm{w_{ht}(0)}_{L^2(\partial\Omega)}+\norm{w_h(0)w_{ht}(0)}^2_{L^2(\partial\Omega)}\Big)\\
	&\hspace{5.5cm}\exp\Big(C\int_{0}^{t}\big(\norm{w_h(s)}^2+\norm{\Delta_h w_h(s)}^2+\norm{\nabla w_{ht}(s)}^2\big)\;ds\Big).
	\end{align*}
	As in continuous case, we can find the value $\norm{\nabla w_{ht}(0)}$. The other two terms namely $\norm{w_{ht}(0)}_{L^2(\partial\Omega)}$ and $\norm{w_h(0)w_{ht}(0)}^2_{L^2(\partial\Omega)}$ are bounded respectively
	by $C\norm{w_{ht}(0)}^2_1$ and $C\norm{w(0)}^2_{L^4(\Omega)}\norm{w_{ht}(0)}^2_1$.
	Again, a use of Lemmas \ref{flm3.1}, \ref{flm3.2}, and \ref{flm3.4} to the above inequality completes the rest of the proof.
\end{proof}
\subsection{Error estimates}
Define an auxiliary projection $\tilde w_h\in V_h$ of $w$ through the following form
\begin{equation}\label{feq3.1}
\Big(\nabla(w-\tilde w_h),\nabla\chi\Big)+\lambda\Big(w-\tilde w_h,\chi\Big)=0\quad \forall~ \chi\in V_h,
\end{equation}
where $\lambda\geq 1$ is some fixed positive number. For a given $w,$ the existence of a unique $\tilde w_h$ follows by the Lax-Milgram lemma. 
Let $\eta:=w-\tilde w_h$ be the error involved in the auxiliary projection. Then, the following error estimates hold:
\begin{align}\label{feq3.2}
\norm{\eta}_j&\leq C h^{\min(2,m)-j}\;\norm{w}_m, \;\text{and}\notag\\
\norm{\eta _t}_j&\leq C h^{\min(2,m)-j}\norm{w_t}_m,\;\;
j=0,1 \;\mbox{ and }\; m=1,2.
\end{align}
For a proof, we refer to Thom\'{e}e \cite{thomee}.
Following Lemma \ref{x1} is needed to establish  error estimates.
\begin{lemma}\label{x1}
	Let $F\in H^{3/2+\epsilon}(\Omega)$, for some $\epsilon>0$, and $G\in H^{1/2}(\partial\Omega).$ Then $FG\in H^{1/2}(\partial\Omega)$ and
	$$\norm{FG}_{H^{1/2}(\partial\Omega)}\leq C\norm{F}_{H^{3/2+\epsilon}(\Omega)}\norm{G}_{H^{1/2}(\partial\Omega)}.$$
\end{lemma}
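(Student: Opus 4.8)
The plan is to reduce this boundary product estimate to a multiplier property on the one-dimensional manifold $\partial\Omega$, combined with the trace theorem. First I would pass from bulk regularity of $F$ to boundary regularity: since $\Omega\subset\mathbb{R}^2$ has smooth boundary, the trace operator maps $H^{3/2+\epsilon}(\Omega)$ boundedly into $H^{1+\epsilon}(\partial\Omega)$, so that $\norm{F}_{H^{1+\epsilon}(\partial\Omega)}\le C\norm{F}_{H^{3/2+\epsilon}(\Omega)}$. It therefore suffices to prove the purely boundary estimate $\norm{FG}_{H^{1/2}(\partial\Omega)}\le C\norm{F}_{H^{1+\epsilon}(\partial\Omega)}\norm{G}_{H^{1/2}(\partial\Omega)}$, i.e.\ that multiplication by the trace of $F$ is a bounded operator on $H^{1/2}(\partial\Omega)$ whose operator norm is controlled by $\norm{F}_{H^{1+\epsilon}(\partial\Omega)}$.

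To establish this, I would fix $F$, view $M_F:G\mapsto FG$ as a linear operator, and obtain the $H^{1/2}$ bound by interpolating between the $L^2$ and $H^1$ endpoints. Since $\partial\Omega$ is one-dimensional and $1+\epsilon>1/2$, the Sobolev embedding $H^{1+\epsilon}(\partial\Omega)\hookrightarrow L^\infty(\partial\Omega)$ gives the $L^2$ endpoint directly: $\norm{FG}_{L^2(\partial\Omega)}\le\norm{F}_{L^\infty(\partial\Omega)}\norm{G}_{L^2(\partial\Omega)}\le C\norm{F}_{H^{1+\epsilon}(\partial\Omega)}\norm{G}_{L^2(\partial\Omega)}$. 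For the $H^1$ endpoint I would differentiate along the boundary, $(FG)'=F'G+FG'$, and bound each piece separately: $\norm{FG'}_{L^2}\le\norm{F}_{L^\infty}\norm{G'}_{L^2}$, while for the remaining term the one-dimensional embedding $H^1(\partial\Omega)\hookrightarrow L^\infty(\partial\Omega)$ yields $\norm{F'G}_{L^2}\le\norm{F'}_{L^2}\norm{G}_{L^\infty}\le C\norm{F}_{H^{1+\epsilon}(\partial\Omega)}\norm{G}_{H^1(\partial\Omega)}$. Together these give $\norm{M_FG}_{H^1(\partial\Omega)}\le C\norm{F}_{H^{1+\epsilon}(\partial\Omega)}\norm{G}_{H^1(\partial\Omega)}$.

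With both endpoint bounds carrying the same constant $C\norm{F}_{H^{1+\epsilon}(\partial\Omega)}$, the interpolation identity $[L^2(\partial\Omega),H^1(\partial\Omega)]_{1/2}=H^{1/2}(\partial\Omega)$ together with the interpolation theorem for bounded linear operators produces $\norm{M_FG}_{H^{1/2}(\partial\Omega)}\le C\norm{F}_{H^{1+\epsilon}(\partial\Omega)}\norm{G}_{H^{1/2}(\partial\Omega)}$; combined with the trace bound of the first step, this is exactly the claimed inequality. I expect the main technical point to be the $H^1$ endpoint, specifically the control of the $F'G$ term: here it is essential that $\partial\Omega$ is one-dimensional, so that $G\in H^1(\partial\Omega)$ is automatically in $L^\infty(\partial\Omega)$ and the rough factor $F'\in H^{\epsilon}(\partial\Omega)$ is only tested in $L^2$. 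In higher dimensions the analogous product estimate would fail at this range of exponents and a genuine paraproduct or multiplier argument would be needed. A secondary point is to apply interpolation at the level of the operator norm, so that the final constant depends linearly on $\norm{F}_{H^{1+\epsilon}(\partial\Omega)}$ as asserted.
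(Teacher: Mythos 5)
Your argument is correct, but it is worth noting that the paper does not actually prove this lemma at all: it simply cites Douglas--Dupont \cite{dt73}, where such multiplier estimates for nonlinear boundary conditions are established. Your proposal therefore supplies a genuine, self-contained proof where the paper defers to the literature. The route you take --- trace from $H^{3/2+\epsilon}(\Omega)$ into $H^{1+\epsilon}(\partial\Omega)$, then showing that multiplication by this trace is bounded on $H^{1/2}(\partial\Omega)$ by interpolating the operator $M_F$ between the $L^2(\partial\Omega)$ and $H^1(\partial\Omega)$ endpoints --- is sound: the $L^2$ endpoint uses $H^{1+\epsilon}(\partial\Omega)\hookrightarrow L^\infty(\partial\Omega)$, the $H^1$ endpoint uses the Leibniz rule together with $H^1(\partial\Omega)\hookrightarrow L^\infty(\partial\Omega)$ on the one-dimensional boundary, and both endpoint operator norms are linear in $\norm{F}_{H^{1+\epsilon}(\partial\Omega)}$, so the interpolated norm on $H^{1/2}(\partial\Omega)=[L^2(\partial\Omega),H^1(\partial\Omega)]_{1/2}$ (valid on a smooth closed curve, cf.\ Lions--Magenes \cite{lm68}) inherits the same linear dependence. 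You correctly identify the crucial role of $\dim\partial\Omega=1$: it is what lets the rough factor $F'\in H^{\epsilon}(\partial\Omega)$ be paired in $L^2$ against $G\in L^\infty(\partial\Omega)$, whereas in higher dimensions this endpoint would fail and a paraproduct argument (essentially what \cite{dt73} provides in general) would be required. The only cosmetic caveat is that your bound is stated in terms of the boundary norm $\norm{F}_{H^{1+\epsilon}(\partial\Omega)}$, which the trace theorem converts to the bulk norm $\norm{F}_{H^{3/2+\epsilon}(\Omega)}$ appearing in the statement; you do make this reduction explicitly, so nothing is missing.
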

\begin{proof}
	For a proof see \cite{dt73}.
\end{proof}
In addition, for proving error estimates for state variable and feedback controllers, we need the following estimate of $\eta$ and $\eta_t$ at boundary which are proved in \cite{skakp2}.
\begin{lemma}\label{flm3.6}
	For smooth $\partial\Omega,$ there holds	
	\begin{align*}
	&\norm{\eta}_{L^2(\partial\Omega)}\leq Ch^{3/2}\norm{w}_2, \quad	\norm{\eta}_{H^{-1/2}(\partial\Omega)}\leq Ch^2\norm{w}_2, \quad \norm{\eta_t}_{H^{-1/2}(\partial\Omega)}\leq Ch^2\norm{w_t}_2,\\
	&\norm{\eta}_{L^4(\partial\Omega)}\leq Ch\norm{w}_2, \quad\text{and} \quad \norm{\eta_t}_{L^4(\partial\Omega)}\leq Ch\norm{w_t}_{2}.
	\end{align*}
\end{lemma}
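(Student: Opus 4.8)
The five estimates split into three groups, and in every case the point of departure is the pair of interior projection bounds $\norm{\eta}\le Ch^2\norm{w}_2$ and $\norm{\eta}_1\le Ch\norm{w}_2$ (together with the identical bounds for $\eta_t$, since differentiating \eqref{feq3.1} in $t$ shows that $\eta_t$ obeys the same relation) supplied by \eqref{feq3.2} with $m=2$. The plan is to convert these interior rates into boundary rates by means of, respectively, the boundary trace embedding theorem, a multiplicative trace inequality, and an Aubin--Nitsche duality argument.

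For the two $L^4(\partial\Omega)$ estimates I would invoke the Boundary Trace Embedding Theorem of Section~2 with $q=4$, giving at once $\norm{\eta}_{L^4(\partial\Omega)}\le C\norm{\eta}_1\le Ch\norm{w}_2$ and $\norm{\eta_t}_{L^4(\partial\Omega)}\le C\norm{\eta_t}_1\le Ch\norm{w_t}_2$. For the $L^2(\partial\Omega)$ estimate I would use the multiplicative trace inequality $\norm{v}_{L^2(\partial\Omega)}\le C\norm{v}^{1/2}\norm{v}_1^{1/2}$, valid for $v\in H^1(\Omega)$, and substitute the two interior rates: $\norm{\eta}_{L^2(\partial\Omega)}\le C\norm{\eta}^{1/2}\norm{\eta}_1^{1/2}\le C(h^2\norm{w}_2)^{1/2}(h\norm{w}_2)^{1/2}=Ch^{3/2}\norm{w}_2$, which is precisely the half-order improvement over the $L^4$ rate recorded in the statement.

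The substantial estimate is the negative-norm bound $\norm{\eta}_{H^{-1/2}(\partial\Omega)}\le Ch^2\norm{w}_2$, for which I would argue by duality. Write $a(u,v):=(\nabla u,\nabla v)+\lambda(u,v)$, so that \eqref{feq3.1} is precisely the Galerkin orthogonality $a(\eta,\chi)=0$ for all $\chi\in V_h$. Given $g\in H^{1/2}(\partial\Omega)$, introduce the auxiliary Neumann problem $-\Delta\psi+\lambda\psi=0$ in $\Omega$, $\frac{\partial\psi}{\partial n}=g$ on $\partial\Omega$, whose weak form is $a(\phi,\psi)=\langle\phi,g\rangle_{\partial\Omega}$ for every $\phi\in H^1(\Omega)$. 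Because $\partial\Omega$ is smooth, elliptic regularity gives $\psi\in H^2(\Omega)$ with $\norm{\psi}_2\le C\norm{g}_{H^{1/2}(\partial\Omega)}$. Taking $\phi=\eta$ and using orthogonality to subtract an arbitrary $\chi\in V_h$, one obtains $\langle\eta,g\rangle_{\partial\Omega}=a(\eta,\psi-\chi)\le C\norm{\eta}_1\norm{\psi-\chi}_1$; choosing $\chi$ to be the finite element interpolant of $\psi$, so that $\norm{\psi-\chi}_1\le Ch\norm{\psi}_2$, and inserting $\norm{\eta}_1\le Ch\norm{w}_2$ yields $|\langle\eta,g\rangle_{\partial\Omega}|\le Ch^2\norm{w}_2\norm{g}_{H^{1/2}(\partial\Omega)}$. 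Taking the supremum over $g$ gives the claim, and the identical argument applied to $\eta_t$ delivers $\norm{\eta_t}_{H^{-1/2}(\partial\Omega)}\le Ch^2\norm{w_t}_2$.

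I expect the duality step to be the only real obstacle. The negative boundary norm rules out a direct trace estimate and forces the dual problem, and the extra power of $h$ relative to the $L^2(\partial\Omega)$ rate rests entirely on the $H^2$-regularity bound $\norm{\psi}_2\le C\norm{g}_{H^{1/2}(\partial\Omega)}$ for the dual Neumann problem, which is exactly where the smoothness of $\partial\Omega$ enters. The remaining estimates are routine once the interior rates \eqref{feq3.2} are in hand.
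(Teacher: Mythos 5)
Your argument is correct and complete. Note that the paper itself does not prove Lemma \ref{flm3.6} at all --- its ``proof'' is only the citation to \cite{skakp2} --- so there is no in-text argument to compare against; what you have written is the standard Douglas--Dupont/Thom\'ee route that the cited reference follows, and it fills the gap the paper leaves open. Each of the three mechanisms is sound: the trace embedding $H^1(\Omega)\hookrightarrow L^4(\partial\Omega)$ with the $O(h)$ interior $H^1$ rate gives the two $L^4(\partial\Omega)$ bounds; the multiplicative trace inequality $\norm{v}_{L^2(\partial\Omega)}^2\leq C\norm{v}\,\norm{v}_1$ interpolates the $O(h^2)$ and $O(h)$ interior rates into $O(h^{3/2})$; and the $H^{-1/2}(\partial\Omega)$ bound genuinely requires the duality argument you give, since a direct trace estimate can only yield $h^{3/2}$. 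Your dual Neumann problem $-\Delta\psi+\lambda\psi=0$, $\partial\psi/\partial n=g$ is the right one for the form $a(u,v)=(\nabla u,\nabla v)+\lambda(u,v)$ defining the projection \eqref{feq3.1}, the weak identity $a(\phi,\psi)=\langle\phi,g\rangle_{\partial\Omega}$ is exactly what is needed to represent the boundary pairing, and the shift estimate $\norm{\psi}_2\leq C\norm{g}_{H^{1/2}(\partial\Omega)}$ is where the smoothness of $\partial\Omega$ enters, as you correctly identify. The observation that $\eta_t$ satisfies the same Galerkin orthogonality (differentiate \eqref{feq3.1} in $t$, with $V_h$ time-independent) so that all estimates transfer verbatim is also correct.
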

\begin{proof}
	for a proof see \cite{skakp2}.
\end{proof}
With $e:=w-w_h,$ decompose $e:=(w-\tilde w_h)- (w_h-\tilde w_h)=:\eta-\theta,$ where $\eta=w-\tilde w_h$ and $\theta=w_h-\tilde w_h$.\\
Since estimates of $\eta$ are known from \eqref{feq3.2} and Lemma \ref{flm3.6}, it is sufficient to estimate $\theta$.
Subtracting the weak formulation \eqref{feq1.10} from \eqref{feqn3.1}, a use of \eqref{feq3.1} yields
\begin{align}\label{feqx3.1}
(\theta_t,\chi)+\nu(\nabla \theta, \nabla \chi)&=(\eta_t,\chi)-\nu\lambda(\eta,\chi)
+\Bigg(\Big(u^\infty(\nabla w\cdot {\bf{1}})+w(\nabla u^\infty\cdot {\bf{1}})\Big)-
\Big(u^\infty_h(\nabla w_h\cdot {\bf{1}})\notag\\
&\quad+w_h(\nabla u^\infty_h\cdot {\bf{1}})\Big),\chi\Bigg)+\Big((\eta-\theta)\nabla w\cdot {\bf{1}}+w_h(\nabla\eta-\nabla\theta)\cdot {\bf{1}},\chi\Big)\notag\\
&\quad+\int_{\partial\Omega}\Big((c_0+\nu+2|u^\infty|)(\eta-\theta)\chi+2(|u^\infty|-|u^\infty_h|)w_h\chi+\frac{2}{9c_0}
(w^3-w_h^3)\chi \Big) \;d\Gamma.
\end{align}
Further, \eqref{feqx3.1} can be rewritten as
\begin{align}\label{feqn3.5}
(\theta_t,\chi)+\nu(\nabla \theta, \nabla \chi)&=(\eta_t,\chi)-\nu\lambda( \eta,\chi) +\int_{\partial\Omega}\Big((c_0+\nu+2|u^\infty|)(\eta-\theta)+2(|u^\infty|-|u^\infty_h|)w_h\notag\\
&\qquad+\frac{2}{9c_0}\big(\eta^3-\theta^3+3w\eta(w-\eta)-3\theta w_h(w_h-\theta)\big)\Big) \chi \;d\Gamma\notag\\
&\qquad+\big((\eta-\theta)(\nabla w\cdot{\bf{1}})+w_h((\nabla \eta-\nabla \theta)\cdot{\bf{1}}),\chi\big)\notag\\
&\qquad+\Bigg(\Big(u^\infty(\nabla(\eta-\theta)\cdot{\bf{1}})+(u^\infty-u^\infty_h)(\nabla w_h\cdot{\bf{1}})+(\eta-\theta)(\nabla u^\infty\cdot{\bf{1}})\notag\\
&\qquad +w_h(\nabla(u^\infty-u^\infty_h)\cdot{\bf{1}})\Big),\chi\Bigg)
 .
\end{align}
Before proving the main error estimate theorem, we first estimate for $\norm{u^\infty-u^\infty_h}_{j}$ for $j=0,\hspace{0.1cm} 1$.
For steady state error estimates, subtracting the corresponding
steady state weak formulation to obtain
\begin{align}
\nu(\nabla(u^\infty-u^\infty_h),\nabla \chi )+\Big(u^\infty(\nabla u^\infty\cdot{\bf{1}})-u^\infty_h(\nabla u^\infty_h\cdot{\bf{1}}),\chi\Big)=0 \quad \forall \quad\chi\in V_h.
\end{align}
Let $\tilde u^\infty_h\in V_h$ be the elliptic projection of $u^\infty\in H^1(\Omega)$ defined by
\begin{align}
(\nabla(u^\infty-\tilde u^\infty_h))+\lambda(u^\infty-\tilde u^\infty_h,\chi)=0,
\end{align}
where $\lambda\geq 1$ is some fixed positive number.
and $\tilde u^\infty_h$ coincides with $u^\infty_h$ on the boundary.
The steady state error is splitted into two parts $\tilde e:=u^\infty-u^\infty_h=(u^\infty-\tilde u^\infty_h)-(u^\infty_h-\tilde u^\infty_h)=:\tilde\eta-\tilde\theta$, where $\tilde{\eta}$ satisfies
\begin{align*}
\norm{\tilde{\eta}}_j\leq Ch^{2-j}\norm{u^\infty}_2\quad \text{for} \quad j=0,\hspace{0.1cm}1.
\end{align*}
Now equation in $\tilde\theta$ becomes 
\begin{align}\label{fx3.5}
\nu(\nabla\tilde\theta, \nabla\chi)=\nu\lambda(\tilde \eta,\chi)+(u^\infty(\nabla(\tilde \eta-\tilde{\theta})\cdot{\bf {1}})+(\tilde{\eta}-\tilde{\theta})(\nabla u^\infty_h\cdot{\bf {1}}),\chi).
\end{align}
The following steady state error estimates hold
\begin{theorem}\label{thmx1}
For $u^\infty\in H^2$, there holds	
\begin{align*}
	\norm{u^\infty-u^\infty_h}_j\leq Ch^{2-j}\quad \text{for}\quad j=0,\hspace{0.1cm} 1.
\end{align*}
\end{theorem}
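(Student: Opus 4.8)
The plan is to estimate the two pieces of the splitting $\tilde e := u^\infty - u^\infty_h = \tilde\eta - \tilde\theta$ separately. Since the projection error already obeys $\norm{\tilde\eta}_j \le Ch^{2-j}\norm{u^\infty}_2$, it suffices to bound $\tilde\theta\in V_h$. For the $H^1$ case ($j=1$) I would test the $\tilde\theta$-equation \eqref{fx3.5} with $\chi=\tilde\theta$ and rewrite the convective contributions through the trilinear form $B(\cdot;\cdot,\cdot)$. The two terms quadratic in $\tilde\theta$, namely $B(u^\infty;\tilde\theta,\tilde\theta)$ and $B(\tilde\theta;u^\infty_h,\tilde\theta)$, are bounded by $N\norm{\nabla u^\infty}\norm{\nabla\tilde\theta}^2$ and $N\norm{\nabla u^\infty_h}\norm{\nabla\tilde\theta}^2$ respectively; invoking the smallness estimates $\norm{\nabla u^\infty}\le \nu/(4N)$ from \eqref{fx2.1} and $\norm{\nabla u^\infty_h}\le\nu/(4N)$ from \eqref{sd4}, each is dominated by $\frac{\nu}{4}\norm{\nabla\tilde\theta}^2$ and so is absorbed into the left-hand side $\nu\norm{\nabla\tilde\theta}^2$. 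The surviving terms are linear in $\tilde\theta$ and carry a factor $\tilde\eta$; using a Poincar\'e--Wirtinger inequality on the quotient space (with zero-mean representatives, so $\norm{\tilde\theta}\le C\norm{\nabla\tilde\theta}$) and Young's inequality they contribute $C(\norm{\tilde\eta}^2+\norm{\nabla\tilde\eta}^2)=O(h^2)$. This gives $\norm{\nabla\tilde\theta}\le Ch\norm{u^\infty}_2$, and the triangle inequality $\norm{u^\infty-u^\infty_h}_1\le\norm{\tilde\eta}_1+\norm{\tilde\theta}_1$ settles $j=1$.

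For the $L^2$ estimate ($j=0$) I would run an Aubin--Nitsche duality argument built on the linearization at $u^\infty$. Substituting $u^\infty_h=u^\infty-\tilde e$ into the steady error equation and expanding the nonlinearity shows that $\tilde e$ satisfies $a(\tilde e,\chi)=B(\tilde e;\tilde e,\chi)$ for all $\chi\in V_h$, where $a(z,v):=\nu(\nabla z,\nabla v)+B(u^\infty;z,v)+B(z;u^\infty,v)$ is the weak form of the linearized operator. I then introduce the adjoint problem: find $\Phi\in H^1/\mathbb{R}$ with $a(v,\Phi)=(\tilde e,v)$ for all $v\in H^1/\mathbb{R}$. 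Under Assumption ${\bf (A1)}$ the smallness of $\norm{\nabla u^\infty}$ makes $a(\cdot,\cdot)$ a small perturbation of $\nu(\nabla\cdot,\nabla\cdot)$, so the adjoint problem is well-posed and, by elliptic regularity for the homogeneous-Neumann Laplacian, admits $\norm{\Phi}_2\le C\norm{\tilde e}$.

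I would next write $\norm{\tilde e}^2=a(\tilde e,\Phi)=a(\tilde e,\Phi-\Phi_h)+a(\tilde e,\Phi_h)$ for $\Phi_h\in V_h$ an interpolant of $\Phi$, and replace the second term by the error relation, $a(\tilde e,\Phi_h)=B(\tilde e;\tilde e,\Phi_h)$. The first term is controlled by the continuity of $a$, the $H^1$ bound $\norm{\nabla\tilde e}\le Ch$ just obtained, and $\norm{\nabla(\Phi-\Phi_h)}\le Ch\norm{\Phi}_2\le Ch\norm{\tilde e}$, giving $O(h^2\norm{\tilde e})$. The nonlinear term $B(\tilde e;\tilde e,\Phi_h)$ I split via $\Phi_h=\Phi-(\Phi-\Phi_h)$ and bound each piece by the Gagliardo--Nirenberg inequality together with $\norm{\Phi}_{L^4}\le C\norm{\Phi}_2\le C\norm{\tilde e}$; since $\norm{\tilde e}_{L^4}$ and $\norm{\nabla\tilde e}$ are both $O(h)$, this term is also $O(h^2\norm{\tilde e})$. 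Collecting everything yields $\norm{\tilde e}^2\le Ch^2\norm{\tilde e}$, hence $\norm{\tilde e}\le Ch^2$.

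The main obstacle I anticipate is precisely this $L^2$ duality step: one must confirm that Assumption ${\bf (A1)}$ genuinely forces coercivity and $H^2$-regularity of the linearized adjoint problem, so that the dual solution $\Phi$ exists and obeys $\norm{\Phi}_2\le C\norm{\tilde e}$, and one must arrange the nonlinear trilinear term so that it does not pollute the second-order rate. This is exactly where the smallness of the nonconstant steady state, rather than a generic Lax--Milgram argument, becomes indispensable.
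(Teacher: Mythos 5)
Your proposal follows essentially the same route as the paper: the same splitting $\tilde e=\tilde\eta-\tilde\theta$ with an energy argument for the $H^1$ bound that absorbs the quadratic terms via the smallness $N(\norm{\nabla u^\infty}+\norm{\nabla u^\infty_h})\leq \nu/2$, and the same Aubin--Nitsche duality with the linearized adjoint problem for the $L^2$ bound, ending at $\norm{\tilde e}\leq Ch\norm{\nabla\tilde e}+C\norm{\nabla\tilde e}^2$. Your version is, if anything, slightly more careful than the paper's in making the identity $a(\tilde e,\chi)=B(\tilde e;\tilde e,\chi)$ explicit and in flagging that the well-posedness and $H^2$-regularity of the dual problem must be justified from Assumption $\bf(A1)$, which the paper simply asserts.
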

\begin{proof}
Setting $\chi=\tilde{\theta}$ in \eqref{fx3.5}, it follows that
\begin{align*}
\nu \norm{\nabla\tilde{\theta}}^2&=\nu\lambda(\nabla\tilde \eta,\nabla\tilde\theta)+(u^\infty(\nabla(\tilde \eta-\tilde{\theta})\cdot{\bf {1}})+(\tilde{\eta}-\tilde{\theta})(\nabla u^\infty_h\cdot{\bf {1}}),\tilde\theta)\\
&\leq \nu\lambda\norm{\nabla\tilde\eta}\norm{{\nabla\tilde\theta}}+N\norm{ \nabla u^\infty}\norm{\nabla\tilde\eta}\norm{\nabla\tilde\theta}+N\norm{ \nabla u^\infty}\norm{\nabla\tilde\theta}^2+\\
&\quad+N\norm{\nabla\tilde\eta}\norm{ \nabla u^\infty_h}\norm{\nabla\tilde\theta}+N\norm{ \nabla u^\infty_h}\norm{\nabla \tilde{\theta}}^2\\
&\leq \frac{\nu}{4}\norm{\nabla\tilde{\theta}}^2+C\norm{\nabla\tilde{\eta}}^2\Big(1+\norm{\nabla u^\infty}^2+\norm{ \nabla u^\infty_h}^2\Big)+N\Big(\norm{\nabla u^\infty}+\norm{ \nabla u^\infty_h}\Big)\norm{\nabla\tilde{\theta}}^2.
\end{align*}
Now using the bound of $\norm{\nabla u^\infty}$, we obtain $N\Big(\norm{ \nabla u^\infty}+\norm{\nabla u^\infty_h}\Big)\leq \frac{\nu}{2}$. Hence we arrive at
\begin{align*}
\norm{\nabla\tilde{\theta}}^2\leq C\norm{\nabla \tilde{\eta}}^2\leq Ch^2\norm{u^\infty}^2_2.
\end{align*}
Using estimate of $\norm{\nabla\tilde{\eta}}$ and a use of triangle inequality completes the first part of the proof.
For $L^2$-error estimate, we  use Aubin-Nitsche technique. Consider the problem
\begin{align}
-\nu\Delta\phi^\infty-u^\infty (\nabla\phi^\infty\cdot{\bf {1}})=\tilde e \quad \text{with} -\nu \frac{\partial \phi^\infty}{\partial n}=0,
\end{align}
where $\phi^\infty$ satisfies $\norm{\phi^\infty}_2\leq C\norm{\tilde e}$.
Now for proving error estimates, it is enough to estimate for
$\norm{\theta(t)}$.
In its weak formulation, it satisfies
\begin{align}\label{fx3.1}
\nu(\nabla\phi^\infty,\nabla v)+(u^\infty(\nabla v\cdot{\bf {1}}),\phi^\infty)+(\phi^\infty(\nabla u^\infty\cdot{\bf {1}}),v)=(\tilde e,v)\quad v\in H^1_0.
\end{align}
Set $v=\tilde e$ to obtain
\begin{align}\label{fx3.2}
\norm{\tilde e}^2=\nu(\nabla\phi^\infty,\nabla\tilde e)+(u^\infty(\nabla\tilde e\cdot{\bf {1}}),\phi^\infty)+(\phi^\infty(\nabla u^\infty\cdot{\bf {1}}),\tilde e).
\end{align}
Also subtracting the corresponding steady state weak formulation we obtain
\begin{align}\label{fx3.3}
\nu(\nabla \tilde e,\nabla \chi)+\Big(u^\infty(\nabla u^\infty\cdot{\bf {1}})-u^\infty_h(\nabla u^\infty_h\cdot{\bf {1}}),\chi\Big)=0.
\end{align}
Hence from \eqref{fx3.2} and \eqref{fx3.3}, we arrive at
\begin{align*}
\norm{\tilde e}^2&=\nu(\nabla \tilde e,\nabla (\phi^\infty-\chi))+(u^\infty(\nabla \tilde e\cdot{\bf {1}}),\phi^\infty)+(\phi^\infty(\nabla u^\infty\cdot{\bf {1}}),\tilde e)\\
&\quad-\Big(u^\infty(\nabla u^\infty\cdot{\bf {1}})-u^\infty_h(\nabla u^\infty_h\cdot{\bf {1}}),\chi\Big).
\end{align*}
Set $\chi=\tilde{\phi^\infty_h}$ satisfying $\norm{\nabla\phi^\infty-\nabla\tilde{\phi^\infty_h}}\leq Ch\norm{\phi^\infty}_2\leq Ch\norm{\tilde e}$. 
Therefore
\begin{align*}
\norm{\tilde e}^2&=\nu(\nabla \tilde e,\nabla (\phi^\infty-\tilde\phi^\infty_h))+(u^\infty(\nabla \tilde e\cdot{\bf {1}}),\phi^\infty-\tilde\phi^\infty_h)+(\tilde e(\nabla \tilde e\cdot{\bf {1}}),\phi^\infty)+(\tilde e(\nabla u^\infty_h\cdot{\bf{1}}),\phi^\infty-\tilde\phi^\infty_h)\\
&\leq Ch\norm{\nabla \tilde e}\norm{\tilde e}+Ch\norm{ \nabla u^\infty}\norm{\nabla \tilde e}\norm{\tilde e}+C\norm{\nabla \tilde e}^2\norm{\tilde e}+Ch\norm{\nabla \tilde e}\norm{\nabla u^\infty_h}\norm{\tilde e}.
\end{align*}
Hence
\begin{equation*}
\norm{\tilde e}=Ch\norm{\nabla\tilde e}+C\norm{\nabla\tilde e}^2.
\end{equation*}
\end{proof}
\begin{lemma}\label{x2}
\begin{equation*}
\norm{\tilde e}_{H^{-1/2}(\partial\Omega)}\leq Ch^2\norm{u^\infty}_2.
\end{equation*}
\end{lemma}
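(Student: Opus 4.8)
The plan is to work with the splitting $\tilde e=u^\infty-u^\infty_h=\tilde\eta-\tilde\theta$ introduced above, and to exploit that the auxiliary projection $\tilde u^\infty_h$ was chosen to coincide with $u^\infty_h$ on $\partial\Omega$. Consequently the discrete part $\tilde\theta=u^\infty_h-\tilde u^\infty_h\in V_h$ is a piecewise linear function vanishing at every boundary node, so its trace on $\partial\Omega$ is identically zero. Hence on the boundary $\tilde e=\tilde\eta$, which gives the identity $\norm{\tilde e}_{H^{-1/2}(\partial\Omega)}=\norm{\tilde\eta}_{H^{-1/2}(\partial\Omega)}$ and reduces the claim to a negative-norm boundary estimate for the projection error $\tilde\eta=u^\infty-\tilde u^\infty_h$.

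To bound $\norm{\tilde\eta}_{H^{-1/2}(\partial\Omega)}$ I would argue by duality, this being the steady-state counterpart of the estimate $\norm{\eta}_{H^{-1/2}(\partial\Omega)}\leq Ch^2\norm{w}_2$ recorded in Lemma \ref{flm3.6}. For $g\in H^{1/2}(\partial\Omega)$ introduce the auxiliary problem $-\Delta\psi+\lambda\psi=0$ in $\Omega$ with $\partial\psi/\partial n=g$ on $\partial\Omega$; since $\partial\Omega$ is smooth, elliptic regularity yields $\norm{\psi}_2\leq C\norm{g}_{H^{1/2}(\partial\Omega)}$. Green's formula rewrites the boundary pairing as $\langle\tilde\eta,g\rangle_{\partial\Omega}=(\nabla\tilde\eta,\nabla\psi)+\lambda(\tilde\eta,\psi)$, and the defining Galerkin orthogonality of $\tilde\eta$ against $V_h$ permits subtracting the interpolant $I_h\psi$ to obtain $\langle\tilde\eta,g\rangle_{\partial\Omega}=(\nabla\tilde\eta,\nabla(\psi-I_h\psi))+\lambda(\tilde\eta,\psi-I_h\psi)$. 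Combining the interpolation bounds $\norm{\psi-I_h\psi}_j\leq Ch^{2-j}\norm{\psi}_2$ with the projection estimates $\norm{\tilde\eta}\leq Ch^2\norm{u^\infty}_2$ and $\norm{\nabla\tilde\eta}\leq Ch\norm{u^\infty}_2$ already used in Theorem \ref{thmx1} gives $|\langle\tilde\eta,g\rangle_{\partial\Omega}|\leq Ch^2\norm{u^\infty}_2\,\norm{g}_{H^{1/2}(\partial\Omega)}$; taking the supremum over $g$ yields the assertion. Alternatively one simply invokes the corresponding result from \cite{skakp2}.

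The main obstacle is the duality step itself: gaining the extra power of $h$ beyond the trivial trace bound $\norm{\tilde\eta}_{L^2(\partial\Omega)}\leq Ch^{3/2}\norm{u^\infty}_2$ of Lemma \ref{flm3.6} hinges on the $H^2$-regularity of the auxiliary problem for boundary data in $H^{1/2}(\partial\Omega)$ and on the boundary contribution of $\tilde\eta$ entering only through the gradient and $L^2$ terms, both of which are of order $h$. A further point requiring care, if the projection carries a boundary constraint, is that the test function in the Galerkin orthogonality must be adjusted so that $I_h\psi$ matches the constrained boundary values; once this is handled, the combination $\norm{\nabla\tilde\eta}\,\norm{\nabla(\psi-I_h\psi)}\leq Ch\cdot Ch$ delivers the claimed $O(h^2)$.
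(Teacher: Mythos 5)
Your proposal is correct and follows essentially the same route as the paper: the paper's (very terse) proof likewise rests on the observation that $\tilde u^\infty_h=u^\infty_h$ on $\partial\Omega$, so that $\tilde e=\tilde\eta$ there, and then appeals to the same duality argument used for $\norm{\eta}_{H^{-1/2}(\partial\Omega)}\leq Ch^2\norm{w}_2$ in Lemma \ref{flm3.6} (proved in \cite{skakp2}), which is exactly the argument you spell out.
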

\begin{proof}
Proof follows similarly as to show estimate $\norm{\eta}_{H^{-1/2}(\partial\Omega)}$
in Lemma \ref{flm3.6}. Since $\tilde u^\infty_h=u^\infty_h$ on the boundary, so it completes the proof.
\end{proof}
\begin{theorem}\label{fthm3.1}
Under the assumptions $
\bf(A1)$ and $
\bf(A2)$, there holds
	\begin{align*}
	\norm{\theta(t)}^2&+e^{-2\alpha t}\int_{0}^{t}e^{2\alpha s}\Big(\frac{\beta_1}{2}\norm{\nabla \theta(s)}^2+\frac{\beta_1}{2}\norm{\theta(s)}^2_{L^2(\partial\Omega)}+\frac{1}{18c_0}\norm{\theta(s)}^4_{L^4(\partial\Omega)}\Big)\;ds\\
	&+4e^{-2\alpha t}\int_{0}^{t}e^{2\alpha s}\Big(\int_{\partial\Omega}|u^\infty|\theta(s)^2\;d\Gamma\Big)\;ds \leq Ce^{C}h^4e^{-2\alpha t},
	\end{align*}
	where $\beta_1=\min\{(\frac{\nu}{2}-2\alpha C_F), (2c_0+2\nu-2\alpha C_F)\}$. 
\end{theorem}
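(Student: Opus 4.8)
The plan is to run the weighted energy argument used throughout Section~3 on the error equation \eqref{feqn3.5}, testing with $\chi=e^{2\alpha t}\theta$. Using $2(\theta_t,\theta)=\frac{d}{dt}\norm{\theta}^2$ and multiplying through by the customary factor $2$, the left side becomes $\frac{d}{dt}\norm{e^{\alpha t}\theta}^2-2\alpha\norm{e^{\alpha t}\theta}^2+2\nu\norm{e^{\alpha t}\nabla\theta}^2$ together with the boundary contributions coming from the $-\theta$ part of $(c_0+\nu+2|u^\infty|)(\eta-\theta)$ and the $-\theta^3$ part of the cubic boundary nonlinearity; these furnish $+2e^{2\alpha t}\int_{\partial\Omega}(c_0+\nu+2|u^\infty|)\theta^2\,d\Gamma$ and $+\tfrac{4}{9c_0}e^{2\alpha t}\int_{\partial\Omega}\theta^4\,d\Gamma$, exactly the dissipative quantities in the statement. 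I would then bound the indefinite $-2\alpha\norm{e^{\alpha t}\theta}^2$ from below by Friedrichs's inequality \eqref{feq1}, subtracting $2\alpha C_F(\norm{\nabla\theta}^2+\norm{\theta}^2_{L^2(\partial\Omega)})$ from the dissipative coefficients; this produces $\beta_1=\min\{(\tfrac{\nu}{2}-2\alpha C_F),(2c_0+2\nu-2\alpha C_F)\}$, where the reduction of $2\nu$ to $\tfrac{\nu}{2}$ reflects the portion of gradient dissipation reserved to absorb the convective remainder.

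Next I would estimate the remaining right-hand side of \eqref{feqn3.5}. The linear projection terms $(\eta_t,\chi)$ and $-\nu\lambda(\eta,\chi)$ are controlled directly by \eqref{feq3.2}, giving $O(h^2)$ in $\norm{\eta},\norm{\eta_t}$ and hence $O(h^4)$ after Young's inequality. The interior convective contribution $w_h(\nabla\eta\cdot\mathbf{1})$ must \emph{not} be estimated with $\nabla\eta$ in $L^2$, since $\norm{\nabla\eta}$ is only $O(h)$; instead I would integrate by parts to transfer the derivative off $\eta$, so only the undifferentiated $\eta$ (which is $O(h^2)$) survives, at the cost of boundary and $\nabla w_h,\nabla\theta$ factors that are either absorbed into the gradient dissipation or folded into the Gr\"onwall multiplier. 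The self-convective $w_h(\nabla\theta\cdot\mathbf{1})$ term I would rewrite as $\tfrac12\int_\Omega w_h\,\nabla(\theta^2)\cdot\mathbf{1}$ and integrate by parts, producing $\norm{\nabla w_h}_{L^\infty}\norm{\theta}^2$ for Gr\"onwall. The steady-state differences in $\tilde e=u^\infty-u^\infty_h$ are handled with Theorem \ref{thmx1}, while every coefficient norm of $w,w_h,u^\infty,u^\infty_h$ (in $H^2$, $L^\infty$, $L^4$, and $\norm{\Delta_h w_h}$) is kept bounded by Lemmas \ref{flm1}--\ref{flm5} and \ref{flm3.1}--\ref{flm3.5}.

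With these bounds I would, term by term, apply Young's inequality so that every factor carrying $\norm{\nabla\theta}$, $\norm{\theta}_{L^2(\partial\Omega)}$ or $\norm{\theta}_{L^4(\partial\Omega)}$ is absorbed into the matching dissipative quantity on the left (this is why the stated coefficients are $\tfrac{\beta_1}{2}$ and $\tfrac{1}{18c_0}$ rather than the full $\beta_1$ and $\tfrac{4}{9c_0}$), leaving a differential inequality of the form $\frac{d}{dt}\norm{e^{\alpha t}\theta}^2+(\text{dissipation})\le Ce^{2\alpha t}h^4+C(\cdot)\norm{e^{\alpha t}\theta}^2$, where $C(\cdot)$ is a sum of the bounded quantities above. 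Integrating from $0$ to $t$, applying Gr\"onwall's inequality, and using that $\int_0^t(\norm{w}^2+\norm{\Delta w}^2+\norm{w_h}^2+\norm{\Delta_h w_h}^2+\cdots)\,ds$ is uniformly bounded (the $\alpha=0$ versions of the stabilization lemmas, cf. Remark \ref{frm2.1}), the Gr\"onwall factor is a finite $e^{C}$; a final multiplication by $e^{-2\alpha t}$ yields $Ce^{C}h^4e^{-2\alpha t}$ with the decay rate preserved.

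The hardest step will be the boundary term $2\int_{\partial\Omega}(|u^\infty|-|u^\infty_h|)w_h\,\chi\,d\Gamma$. A naive pointwise bound $\big||u^\infty|-|u^\infty_h|\big|\le|\tilde e|$ followed by an $L^2(\partial\Omega)$ estimate for $\tilde e$ yields only $O(h^3)$, which is insufficient for the $h^4$ target; recovering the optimal order forces a duality argument, pairing $\tilde e$ against $w_h\theta$ through the $H^{-1/2}(\partial\Omega)$ bound of Lemma \ref{x2} together with the product estimate $\norm{w_h\theta}_{H^{1/2}(\partial\Omega)}\le C\norm{w_h}_{H^{3/2+\epsilon}(\Omega)}\norm{\theta}_{H^{1/2}(\partial\Omega)}$ of Lemma \ref{x1}. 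Reconciling this negative-norm duality with the nonlinearity of the absolute value, while simultaneously absorbing the genuinely cubic boundary remainders $3w_h\theta^3$ and $-3w_h^2\theta^2$ into the $L^4(\partial\Omega)$ and $\int_{\partial\Omega}|u^\infty|\theta^2$ dissipation via the semidiscrete bounds, is the delicate part; everything else is routine, if lengthy, bookkeeping of $O(h^4)$ contributions.
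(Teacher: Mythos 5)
Your proposal follows essentially the same route as the paper's proof: test the error equation \eqref{feqn3.5} with the exponentially weighted $\theta$, keep the boundary dissipation coming from the $-\theta$ and $-\theta^3$ parts of the control terms, use Friedrichs's inequality to absorb $-2\alpha\norm{\theta}^2$ into the gradient and boundary dissipation (yielding $\beta_1$), integrate by parts to move derivatives off $\eta$, invoke the $H^{-1/2}(\partial\Omega)$--$H^{1/2}(\partial\Omega)$ duality of Lemmas \ref{x1}, \ref{flm3.6} and \ref{x2} for the boundary terms in $\eta$ and $u^\infty-u^\infty_h$, and close with Gr\"onwall using the uniform-in-$h$ integrability supplied by the stabilization lemmas. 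You also correctly identify the two genuinely delicate points: the need to avoid $\norm{\nabla\eta}=O(h)$ and the need for negative-norm duality on $2\int_{\partial\Omega}(|u^\infty|-|u^\infty_h|)w_h\theta\,d\Gamma$ (where, as you note, the pointwise bound by $|\tilde e|$ does not commute naively with the $H^{-1/2}$ pairing --- a point the paper itself glosses over by writing the bound directly in terms of $\norm{\tilde\eta}_{H^{-1/2}(\partial\Omega)}$).

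One step of your plan would not go through as written: handling $\big(w_h(\nabla\theta\cdot{\bf 1}),\theta\big)$ by rewriting it as $\tfrac12\int_\Omega w_h\,\nabla(\theta^2)\cdot{\bf 1}\,dx$ and integrating by parts produces a Gr\"onwall coefficient $\norm{\nabla w_h}_{L^\infty}$, which is not controlled by any of Lemmas \ref{flm3.1}--\ref{flm3.5}; in 2D even $H^2(\Omega)\not\hookrightarrow W^{1,\infty}(\Omega)$, and for piecewise linears the inverse estimate only gives $\norm{\nabla w_h}_{L^\infty}\le Ch^{-1}\norm{\nabla w_h}$, so the resulting constant would degenerate as $h\to 0$ (and the boundary term $\tfrac12\int_{\partial\Omega}w_h\theta^2(n\cdot{\bf 1})\,d\Gamma$ would also need separate absorption). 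The paper instead keeps the derivative on $\theta$ and estimates $C\norm{w_h}_{L^4}\norm{\theta}_{L^4}\norm{\nabla\theta}$, absorbing $\norm{\nabla\theta}$ into the reserved $\tfrac{\nu}{24}\norm{\nabla\theta}^2$ and converting $\norm{\theta}_{L^4}$ via Gagliardo--Nirenberg into $\norm{\theta}^2$ times quantities ($\norm{w_h}^2\norm{\nabla w_h}^2$, etc.) that are integrable in time uniformly in $h$. With that substitution your argument coincides with the paper's.
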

\begin{proof}
	Setting $\chi=\theta$ in \eqref{feqn3.5}, we obtain
	\begin{align}\label{feq3.6}
	\frac{1}{2}\frac{d}{dt}\norm{\theta}^2+\nu\norm{\nabla\theta}^2
	&=\Big((\eta_t,\theta)-\nu\lambda(\eta,\theta)\Big)+\int_{\partial\Omega}\Bigg((c_0+\nu+2|u^\infty|)(\eta-\theta)+2(|u^\infty|-|u^\infty_h|)w_h\notag\\
	&\quad+\frac{2}{9c_0}\Big(\eta^3-\theta^3+3w\eta(w-\eta)-3\theta w_h(w_h-\theta)\Big)\Bigg)\theta\; d\Gamma\notag\\
	&\quad+\Big((\eta-\theta)(\nabla w\cdot{\bf{1}})+w_h((\nabla \eta-\nabla \theta)\cdot{\bf{1}}),\theta\Big)\notag\\
	&\quad+\Bigg(\Big(u^\infty((\nabla \eta-\nabla \theta)\cdot{\bf{1}})+(u^\infty-u^\infty_h)(\nabla w_h\cdot{\bf{1}})+(\eta-\theta)(\nabla u^\infty\cdot{\bf{1}})\notag\\
	&\quad+w_h(\nabla(u^\infty-u^\infty_h)\cdot{\bf{1}})\Big),\theta\Bigg)=\sum_{i=1}^{4}I_i(\theta).
	\end{align}
	The first term $I_1(\theta)$ on the right hand side of \eqref{feq3.6} is bounded by $$I_1(\theta)=(\eta_t,\theta)-\nu\lambda(\eta,\theta)\leq C\big(\norm{\eta}^2+\norm{\eta_t}^2\big)+\frac{\epsilon}{7}\norm{\theta}^2,$$
	where the positive number $\epsilon>0$ is to be chosen later.
	The first subterm in the second term $I_2(\theta)$ on the right hand side is bounded by
	\begin{align*}
	\int_{\partial\Omega}&\Big((c_0+\nu+2|u^\infty|)(\eta-\theta)\theta+2(|u^\infty|-|u^\infty_h|)w_h\theta\Big)
	\;d\Gamma\\
	&\leq -\int_{\partial\Omega}(c_0+\nu+2|u^\infty|)\theta^2 \;d\Gamma+C\norm{\eta}_{H^{-1/2}}\Big(\norm{\theta}^2_{H^{1/2}(\partial\Omega)}+\norm{u^\infty}\norm{\theta}^2_{H^{1/2}(\partial\Omega)}\Big)\\
	&\quad+2\norm{\tilde \eta}_{H^{-1/2}(\partial\Omega)}\norm{w_h\theta}_{H^{1/2}(\partial\Omega)}\\
	&\leq -\int_{\partial\Omega}(c_0+\nu+2|u^\infty|)\theta^2 \;d\Gamma+\frac{\epsilon}{7}\norm{\theta}^2+\frac{\nu}{24}\norm{\nabla \theta}^2+C\norm{\eta}^2_{H^{-1/2}}+Ch^4\norm{u^\infty}_2^2\norm{w_h}^2_2.
	\end{align*}
	For other subterms in the second term $I_2(\theta),$ we note that
	\begin{align*}
	\frac{2}{9c_0}\int_{\partial\Omega}\eta^3\theta d\Gamma\leq \frac{2}{9c_0}\norm{\eta}^3_{L^4(\partial\Omega)}\norm{\theta}_{L^4(\partial\Omega)}\leq \frac{1}{9c_0}\frac{1}{4}\norm{\theta}^4_{L^4(\partial\Omega)}+\frac{C}{c_0}\norm{\eta}^4_{L^4(\partial\Omega)},
	\end{align*}
	\begin{align*}
	\frac{2}{9c_0}\int_{\partial\Omega}3w^2\eta\theta d\Gamma
	&\leq C\norm{\tilde \eta}_{H^{-1/2}(\partial\Omega)}\norm{w^2\theta}_{H^{1/2}(\partial\Omega)}
	\\
	&\leq \frac{\epsilon}{7}\norm{\theta}^2+\frac{\nu}{24}\norm{\nabla \theta}^2+C\norm{w}^4_2\norm{\eta}^2_{H^{-1/2}(\partial\Omega)},
	\end{align*}
	\begin{align*}
	\frac{2}{9c_0}\int_{\partial\Omega}3w\eta^2\theta d\Gamma\leq \frac{\epsilon}{7}\norm{\theta}^2+\frac{\nu}{24}\norm{\nabla \theta}^2+C\norm{w}^2_{L^4(\partial\Omega)}\norm{\eta}^4_{L^4(\partial\Omega)},
	\end{align*}
	and
	\begin{align*}
	\frac{2}{9c_0}\int_{\partial\Omega}3w_h\theta^3 d\Gamma\leq\frac{6}{9c_0}\Bigg(\int_{\partial\Omega} w_h^2\theta^2 d\Gamma+\frac{1}{4}\int_{\partial\Omega}\theta^4 d\Gamma\bigg).
	\end{align*}
	For the third term $I_3(\theta)$ first we bound the following sub-terms as
	\begin{align*}
	\big((\eta-\theta)&(\nabla w\cdot{\bf{1}}),\theta\big)-(w_h(\nabla \theta\cdot{\bf{1}}),\theta)\\
	&\leq C\norm{\eta}\norm{\nabla w}_{L^4}\norm{\theta}_{L^4}+C\norm{\theta}^2_{L^4}\norm{\nabla w}\\
	&\qquad+C\norm{ w_h}_{L^4}\norm{\theta}_{L^4}\norm{\nabla \theta}
	\\
	&\leq \frac{\nu}{24}\norm{\nabla \theta}^2+\frac{\epsilon}{7}\norm{\theta}^2+C\norm{\theta}^2\Big(\norm{w}^2_2+\norm{w_h}^2+\norm{\nabla w_h}^2\\
	&\qquad+\norm{w_h}^2\norm{\nabla w_h}^2\Big)+C\norm{\eta}^2\Big(1+\norm{w}^2+\norm{\Delta w}^2\Big).
	\end{align*}
	The other sub-term in $I_3(\theta)$ is bounded by
	\begin{align*}
	\big(w_h(\nabla\eta\cdot{\bf{1}}),\theta\big)&=-\big(w_h(\nabla\theta\cdot{\bf{1}}),\eta\big)-\big(\eta (\nabla w_h\cdot{\bf{1}}),\theta\big)+\sum_{i=1}^{2}\int_{\partial\Omega}w_h\eta\nu_i\theta\; d\Gamma\\
	&\leq C\norm{\eta}\norm{\nabla \theta}\norm{w_h}_{L^\infty}+C\norm{\eta}\norm{\nabla w_h}_{L^4}\norm{\theta}_{L^4}\\
	&\quad+C\norm{\eta}_{H^{-1/2}(\partial\Omega)}\norm{w_h\theta}_{H^{1/2}(\partial\Omega)}\\
	&\leq \frac{\nu}{24}\norm{\nabla \theta}^2+\frac{\epsilon}{7}\norm{\theta}^2+\frac{c_0}{4}\norm{\theta}^2_{L^2(\partial\Omega)}+C\norm{\eta}^2\Big(1+\norm{w_h}^2+\norm{\Delta_hw_h}^2\Big)\\
	&\qquad+C\norm{\theta}^2\Big((\norm{w_h}^2_1+\norm{\Delta_hw_h}^2)\Big)+C\norm{\eta}^2_{H^{-1/2}(\partial\Omega)}.
	\end{align*}
	For the fourth term $I_4(\theta)$, bound the subterm as in third term
	\begin{align*}
(u^\infty(\nabla \eta\cdot{\bf{1}}),\theta)&=-(u^\infty(\nabla\theta\cdot{\bf{1}}),\eta)-(\eta(\nabla u^\infty\cdot{\bf{1}}),\theta)+\sum_{i=1}^{2}\int_{\partial\Omega}u^\infty\eta n_i\theta\;d\Gamma\\
&\leq \norm{u^\infty}_{L^\infty}\norm{\nabla\theta}\norm{\eta}+\norm{\eta}\norm{\nabla u^\infty}_{L^4}\norm{\theta}_{L^4}+C\norm{\eta}_{H^{-1/2}(\partial\Omega)}\norm{u^\infty\theta}_{H^{1/2}(\partial\Omega)},
	\end{align*}
	\begin{align*}
\Big(-u^\infty(\nabla\theta\cdot{\bf{1}})+(\eta-\theta)(\nabla u^\infty\cdot{\bf{1}}),\theta\Big)&\leq 2N\norm{ \nabla u^\infty}\norm{\nabla\theta}^2+\norm{\eta}\norm{\nabla u^\infty}_{L^4}\norm{\theta}_{L^4}\\
&\leq \frac{\nu}{2}\norm{\nabla\theta}^2+\norm{\eta}\norm{\nabla u^\infty}_{L^4}\norm{\theta}_{L^4},
	\end{align*}
	\begin{align*}
	\Big((u^\infty-u^\infty_h)(\nabla w_h\cdot{\bf{1}}),\theta\Big)\leq \norm{u^\infty-u^\infty_h}
	\norm{\nabla w_h}_{L^4}\norm{\theta}_{L^4},
	\end{align*}
	\begin{align*}
	\Big(w_h(\nabla(u^\infty-u^\infty_h)\cdot{\bf{1}}),\theta\Big)&=-\Big((u^\infty-u^\infty_h)(\nabla w_h\cdot{\bf{1}}),\theta\Big)-\Big(w_h(\nabla\theta\cdot{\bf{1}}),(u^\infty-u^\infty_h)\Big)\\
	&\quad+\sum_{i=1}^{2}\int_{\partial\Omega}w_h(u^\infty-u^\infty_h)n_i\theta \;d\Gamma\\
	&\leq \norm{u^\infty-u^\infty_h}\norm{\nabla w_h}_{L^4}\norm{\theta}_{L^4}+\norm{w_h}_{L^\infty}\norm{\nabla\theta}\norm{u^\infty-u^\infty_h}\\
	&\quad+C\norm{u^\infty-u^\infty_h}_{H^{-1/2}(\partial\Omega)}\norm{w_h\theta}_{H^{1/2}(\partial\Omega)}.
	\end{align*}
	Now, contribution from the fourth term $I_4(\theta)$ becomes 
	\begin{align*}
I_4(\theta)&\leq \frac{\epsilon}{7}\norm{\theta}^2+\frac{\nu}{24}\norm{\nabla\theta}^2
+\frac{\nu}{2} \norm{\nabla\theta}^2+C\Big(\norm{\eta}^2+\norm{\eta}^2_{H^{-1/2}(\partial\Omega)}\Big)\\
&\qquad+Ch^4\norm{u^\infty}^2_2(\norm{w_h}^2_1+\norm{\Delta_hw_h}^2).
	\end{align*}
	Finally, using Lemmas \ref{flm1}-\ref{flm4}, \ref{flm3.1}-\ref{flm3.2} and \ref{flm3.6}, we arrive from \eqref{feq3.6} at
	\begin{align}
	\frac{d}{dt}\norm{\theta}^2&+\frac{\nu}{2}\norm{\nabla \theta}^2+2\int_{\partial\Omega}(c_0+\nu+2|u^\infty|)\theta^2\;d\Gamma+\frac{1}{18c_0}\norm{\theta}^4_{L^4(\partial\Omega)}\notag\\
	&\leq \epsilon\norm{\theta}^2+C\norm{\eta}^2(1+\norm{w}^2+\norm{\Delta w}^2+\norm{\nabla w_h}^2+\norm{\Delta_h w_h}^2)+C\norm{\theta}^2\Big(\norm{w}^2_2
	+\norm{w_h}^2_1\notag\\
	&\qquad+\norm{\Delta_hw_h}^2\Big)+C\norm{\eta}^2_{H^{-1/2}(\partial\Omega)}+Ch^4(\norm{w_h}^2_1+\norm{\Delta_hw_h}^2)\norm{u^\infty}^2_2+C\norm{\eta}^4_{L^4(\partial\Omega)}\label{feq3.10}.
	\end{align}
	Multiply \eqref{feq3.10} by $e^{2\alpha t}$. Using the Friedrichs's inequality
	$$-2\alpha e^{2\alpha t}\norm{\theta}^2\geq -2\alpha C_Fe^{2\alpha t}\big(\norm{\nabla\theta}^2+\norm{\theta}^2_{L^2(\partial\Omega)}\big),$$ and  Lemmas \ref{flm1}-\ref{flm2}, \ref{flm3.2}, \ref{flm3.4} and \ref{flm3.6} in \eqref{feq3.10}, it follows choosing $\epsilon=\frac{\beta_1}{2C_F}$ that
	\begin{align*}
	\frac{d}{dt}\big(\norm{e^{\alpha t}\theta}^2\big)&+e^{2\alpha t}\Bigg(\Big(\frac
{\nu}{2}-2\alpha C_F\Big)\norm{\nabla \theta}^2+\Big(2c_0+2\nu-2\alpha C_F\Big)\norm{\theta}^2_{L^2(\partial\Omega)}\\
	&+4\int_{\partial\Omega}|u^\infty|\theta^2\;d\Gamma+\frac{1}{18c_0}\norm{\theta}^4_{L^4(\partial\Omega)}\Bigg)\\
	&\leq Ce^{2\alpha t}\Big(\norm{\eta}^2+\norm{\eta_t}^2\Big)+Ce^{2\alpha t}\norm{\theta}^2\Big(\norm{w}^2_2+(\norm{w_h}^2_1+\norm{\Delta_hw_h}^2)\Big)\\
	&\qquad+Ce^{2\alpha t}\Big(\norm{\eta}^2_{H^{-1/2}(\partial\Omega)}+C\norm{\eta}^4_{L^4(\partial\Omega)}\Big)+Ch^4\norm{u^\infty}^2_2(\norm{w_h}^2_1+\norm{\Delta_hw_h}^2)\\
	&\qquad+\frac{\beta_1}{2}\Big(\norm{ \nabla\theta}^2+\norm{\theta}^2_{L^2(\partial\Omega)}\Big).
	\end{align*}
	Integrate the above inequality from $0$ to $t$ and then use the Gr\"onwall's inequality to obtain
	\begin{align*}
	\norm{e^{\alpha t}\theta(t)}^2&+\int_{0}^{t}e^{2\alpha s}\Big(\frac{\beta_1}{2}\norm{\nabla \theta(s)}^2+\frac{\beta_1}{2}\norm{\theta(s)}^2_{L^2(\partial\Omega)}+\frac{1}{18c_0}\norm{\theta(s)}^4_{L^4(\partial\Omega)}\Big)\;ds\\
	&\quad+4\int_{0}^{t}e^{2\alpha s}\Big(\int_{\partial\Omega}|u^\infty|\theta(s)^2\;d\Gamma\Big)\;ds\\
	&\leq Ch^4\Big(\int_{0}^{t}\big(\norm{w(s)}^2_2+\norm{w_t(s)}^2_2\big)\;ds\Big)\exp\Bigg(\int_{0}^{t}\Big(\norm{w(s)}^2_2+(\norm{w_h(s)}^2_1+\norm{\Delta_hw_h(s)}^2)\Big)\;ds\Bigg).
	\end{align*}
	A use of Lemmas \ref{flm1}-\ref{flm5} and \ref{flm3.1}-\ref{flm3.2} to the above inequality with a multiplication of $e^{-2\alpha t}$ completes the  the proof.
\end{proof}
\begin{theorem}\label{fthm3.2}
	Under the assumptions $
	\bf(A1)$ and $
	\bf(A2)$, there holds
	\begin{align*}
	\nu \norm{\nabla \theta(t)}^2+4&\int_{\partial\Omega}(c_0+\nu+2|u^\infty|)\theta(t)^2\;d\Gamma+\frac{2}{9c_0}\norm{\theta(t)}^2_{L^4(\partial\Omega)}+\frac{4}{3c_0}\int_{\partial\Omega}w_h(t)^2\theta(t)^2\;d\Gamma\\
	&\quad+e^{-2\alpha t}\int_{0}^{t}e^{2\alpha s}\norm{\theta_t(s)}^2\;ds\leq Ce^{C}h^4e^{-2\alpha t}.
	\end{align*}
\end{theorem}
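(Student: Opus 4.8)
The plan is to test the error equation \eqref{feqn3.5} with $\chi = e^{2\alpha t}\theta_t$, exactly mirroring the continuous argument of Lemma \ref{flm3}. The term $(\theta_t, e^{2\alpha t}\theta_t)$ produces the dissipation $\norm{e^{\alpha t}\theta_t}^2$, while $\nu(\nabla\theta, \nabla(e^{2\alpha t}\theta_t)) = \frac{\nu}{2}\frac{d}{dt}\norm{e^{\alpha t}\nabla\theta}^2 - \nu\alpha\norm{e^{\alpha t}\nabla\theta}^2$ supplies the pointwise $\norm{\nabla\theta(t)}^2$ on the left. The self-interaction boundary contributions, namely those in which $\theta_t$ meets the linear-in-$\theta$ part of the control difference and the pure $\theta^3$ part, combine into $\frac{d}{dt}$ of the boundary energies $\int_{\partial\Omega}(c_0+\nu+2|u^\infty|)\theta^2\,d\Gamma$, $\norm{\theta}^4_{L^4(\partial\Omega)}$ and $\int_{\partial\Omega}w_h^2\theta^2\,d\Gamma$ appearing on the left-hand side, again as in Lemma \ref{flm3}. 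Thus after this test choice the left side of the asserted inequality is built from exact time derivatives plus the nonnegative dissipation $\norm{e^{\alpha t}\theta_t}^2$.

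Next I would bound every remaining term on the right by $\frac12\norm{e^{\alpha t}\theta_t}^2$, to be absorbed, plus lower-order quantities. The interior terms, that is the projection residuals $(\eta_t,\theta_t)$ and $\nu\lambda(\eta,\theta_t)$, the convection differences involving $(\nabla w\cdot\mathbf{1})$ and $w_h(\nabla\eta\cdot\mathbf{1})$, and the steady-state difference terms containing $u^\infty - u^\infty_h$, are handled with Young's inequality, the Gagliardo--Nireberg and Agmon inequalities, and the integration-by-parts device already used in Theorem \ref{fthm3.1} to move a gradient off $\eta$ (or off $u^\infty - u^\infty_h$) onto $\theta$. The boundary products are controlled by the Boundary Trace Embedding Theorem together with Lemma \ref{x1}, producing factors $\norm{\eta}_{H^{-1/2}(\partial\Omega)}$, $\norm{\eta}_{L^4(\partial\Omega)}$ and $\norm{u^\infty - u^\infty_h}_{H^{-1/2}(\partial\Omega)}$, which are $O(h^2)$ and $O(h)$ by \eqref{feq3.2}, Lemma \ref{flm3.6} and Lemma \ref{x2}; the coefficients multiplying these stay bounded through the uniform stabilization estimates for $w, w_h, u^\infty, u^\infty_h$ from Lemmas \ref{flm1}--\ref{flm5}, \ref{flm3.1}--\ref{flm3.5} and Theorem \ref{thmx1}. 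The residual $-\nu\alpha\norm{e^{\alpha t}\nabla\theta}^2$ and any leftover $\norm{\theta}^2_{L^2(\partial\Omega)}$ and $\norm{\nabla\theta}^2$ are not absorbed but retained for Gr\"onwall, since Theorem \ref{fthm3.1} already bounds $\int_0^t e^{2\alpha s}\bigl(\norm{\nabla\theta(s)}^2 + \norm{\theta(s)}^2_{L^2(\partial\Omega)}\bigr)\,ds$ by $Ce^{C}h^4$.

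The main obstacle is the time differentiation hidden inside the boundary nonlinear energies. Forming $\frac{d}{dt}\int_{\partial\Omega}w_h^2\theta^2\,d\Gamma$, and the analogous $L^4$ derivative, produces cross terms such as $\int_{\partial\Omega}w_h w_{ht}\theta^2\,d\Gamma$ in which $w_{ht}$ appears on the boundary; these must be absorbed using the boundary regularity of $w_{ht}$ furnished by Lemmas \ref{flm3.4}--\ref{flm3.5} together with an $L^4(\partial\Omega)$ bound on $\theta$ fed back from Theorem \ref{fthm3.1}. A second delicate point is the sign bookkeeping: one must check that, after absorbing the $\frac12\norm{e^{\alpha t}\theta_t}^2$ and the diffusion-type pieces, the surviving coefficients $4$ and $\frac{4}{3c_0}$ in front of the boundary energies remain nonnegative, which is where the smallness of $\norm{\nabla u^\infty}$ from \eqref{fx2.1} and the decay-rate restriction \eqref{decay} enter.

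Finally I would integrate from $0$ to $t$, estimate the initial data by observing that $\theta(0) = w_{0h} - \tilde w_h(0)$ makes $\norm{\nabla\theta(0)}$ and the boundary norms of $\theta(0)$ of order $h$ through the projection bounds \eqref{feq3.2} and Lemma \ref{flm3.6}, apply Gr\"onwall's inequality, whose exponent $\int_0^t\bigl(\norm{w(s)}^2_2 + \norm{w_h(s)}^2_1 + \norm{\Delta_h w_h(s)}^2\bigr)\,ds$ is finite by the stabilization lemmas, and substitute the $O(h^4)$ bounds from Theorem \ref{fthm3.1} for the retained $\theta$-integrals. Multiplying the resulting inequality by $e^{-2\alpha t}$ then yields the stated estimate with the exponential decay $e^{-2\alpha t}$ preserved.
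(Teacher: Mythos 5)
Your overall skeleton (test \eqref{feqn3.5} with $e^{2\alpha t}\theta_t$, fold the boundary self-interaction terms into exact time derivatives, integrate, apply Gr\"onwall and feed in Theorem \ref{fthm3.1}) is the paper's skeleton, but there is a genuine gap in how you treat the interior convection terms paired with $\theta_t$. You propose to bound $(\eta(\nabla w\cdot{\bf 1}),\theta_t)$, $(w_h(\nabla\eta\cdot{\bf 1}),\theta_t)$ and the $u^\infty-u^\infty_h$ terms directly by $\tfrac12\norm{e^{\alpha t}\theta_t}^2$ plus lower order, possibly after the spatial integration by parts used in Theorem \ref{fthm3.1}. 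Neither branch closes at rate $h^4$. Direct Young's inequality gives, e.g., $(\eta(\nabla w\cdot{\bf 1}),\theta_t)\leq \norm{\eta}_{L^4}\norm{\nabla w}_{L^4}\norm{\theta_t}$, and since ${\bf (A2)}$ only provides $w\in H^2$ (so $\nabla w\notin L^\infty$ in 2D), the best available bound $\norm{\eta}_{L^4}\leq Ch^{3/2}\norm{w}_2$ yields only $O(h^3)$ after squaring; for $(w_h(\nabla\eta\cdot{\bf 1}),\theta_t)$ one gets only $O(h^2)$ from $\norm{\nabla\eta}\leq Ch\norm{w}_2$. On the other hand, the spatial integration by parts that rescued these terms in Theorem \ref{fthm3.1} now transfers the gradient onto the test function and produces $\nabla\theta_t$, which none of the available lemmas control. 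The missing idea --- and the heart of the paper's proof --- is a temporal rearrangement: each such term is rewritten as an exact time derivative of an expression linear in $\theta$ (not $\theta_t$), e.g. $(\eta(\nabla w\cdot{\bf 1}),\theta_t)=\frac{d}{dt}\big((\eta(\nabla w\cdot{\bf 1}),\theta)\big)-(\eta_t(\nabla w\cdot{\bf 1}),\theta)-(\eta(\nabla w\cdot{\bf 1})_t,\theta)$, and similarly (combined with spatial integration by parts) for $w_h(\nabla\eta\cdot{\bf 1})$ and the steady-state difference terms. The remainders then pair $\eta_t$, $\eta$ or $u^\infty-u^\infty_h$ (all $O(h^2)$ in $L^2$ or $H^{-1/2}(\partial\Omega)$) against $\theta$ in $H^1$, whose weighted time integral is already $O(h^4)$ by Theorem \ref{fthm3.1}, while the accumulated $\frac{d}{dt}(\cdots)$ contributions, evaluated at time $t$ after integration, are absorbed into $\frac{\nu}{2}\norm{\nabla\theta(t)}^2$ using the $O(h^2)$ smallness of $\eta$ and $u^\infty-u^\infty_h$. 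Without this step the argument delivers at best $O(h^2)$--$O(h^3)$ in place of the claimed $O(h^4)$.

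A secondary issue: you assert that $\norm{\nabla\theta(0)}$ and the boundary norms of $\theta(0)$ are ``of order $h$'' and treat the initial term as harmless, but $\nu\norm{\nabla\theta(0)}^2=O(h^2)$ would already contradict the claimed $O(h^4)$ bound at $t=0$. One needs $\norm{\nabla\theta(0)}=O(h^2)$ --- a superconvergence statement that rests on $P_h$ being the $H^1$ (elliptic) projection and on the closeness of $u^\infty_h$ to $\tilde u^\infty_h$ --- so as written your initial-data estimate is too weak to conclude.
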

\begin{proof}
	Set $\chi=\theta_t$ in \eqref{feqn3.5} to obtain
	\begin{align}
	\norm{\theta_t}^2+\frac{\nu}{2}\frac{d}{dt}\norm{\nabla\theta}^2
	&=\sum_{i=1}^{4}I_i(\theta_t)\label{feq5.1}.
	\end{align}
	The first term $I_1(\theta_t)$ on the right hand side of \eqref{feqn3.5} is bounded by
	\begin{equation*}
	I_1(\theta_t)=\Big((\eta_t,\theta_t)-\lambda\nu(\eta,\theta_t)\Big)\leq \frac{1}{6}\norm{\theta_t}^2+C\Big(\norm{\eta}^2+\norm{\eta_t}^2\Big).
	\end{equation*}
For the second term $I_2(\theta_t)$ on the right hand side of \eqref{feqn3.5}, first bound the subterms as
	\begin{align*}
\Big<&(c_0+\nu+2|u^\infty|)\eta-\theta,\theta_t\Big>_{\partial\Omega}\\
&=-\frac{1}{2}\frac{d}{dt}((c_0+\nu+2|u^\infty|)\theta^2\;d\Gamma)+\frac{d}{dt}\Big(\int_{\partial\Omega}(c_0+\nu+2|u^\infty|)\eta\theta\;d\Gamma\Big)-\int_{\partial\Omega}(c_0+\nu+2|u^\infty|)\eta_t\theta\;d\Gamma\\
	&\leq -\frac{1}{2}\frac{d}{dt}((c_0+\nu+2|u^\infty|)\theta^2\;d\Gamma)+\frac{d}{dt}\Big(\int_{\partial\Omega}(c_0+\nu+2|u^\infty|)\eta\theta\;d\Gamma\Big)\\
	&\qquad+C\Big(\norm{\eta_t}_{H^{-1/2}(\partial\Omega)}+\norm{\theta}^2+\norm{\nabla\theta}^2\Big).
	\end{align*}
	For the other subterms in $I_2(\theta_t)$ on the right hand side of \eqref{feqn3.5}, the bounds are as follows
	\begin{align*}
	\frac{2}{9c_0}\int_{\partial\Omega}\eta^3\theta_t \;d\Gamma
	&\leq \frac{2}{9c_0}\frac{d}{dt}\Big(\int_{\partial\Omega}\eta^3\theta \;d\Gamma\Big)+C\Big(\norm{\eta}^4_{L^4(\partial\Omega)}+\norm{\eta_t}^2_{L^4(\partial\Omega)}\norm{\theta}^2_1\Big),
	\end{align*}
	\begin{align*}
	\frac{2}{9c_0}\int_{\partial\Omega}3w^2\eta\theta_t\;d\Gamma&\leq\frac{2}{3c_0}\frac{d}{dt}\Big(\int_{\partial\Omega}w^2\eta \theta\;d\Gamma\Big)+C\norm{w}^2_2\big(\norm{\theta}^2+\norm{\nabla \theta}^2\big)\\
	&\qquad+C\norm{\eta_t}^2_{H^{-1/2}(\partial\Omega)}+C\norm{\eta}^2_{H^{-1/2}(\partial\Omega)}\norm{w_t}^2_2,
	\end{align*}
	\begin{align*}
	-\frac{2}{9c_0}3\int_{\partial\Omega}w\eta^2\theta_t\;d\Gamma&\leq-\frac{2}{3c_0}\frac{d}{dt}\Big(\int_{\partial\Omega}w\eta^2\theta\;d\Gamma\Big)+C\norm{\eta}^4_{L^4(\partial\Omega)}\\
	&\qquad+C\norm{\eta}^2_{L^4(\partial\Omega)}\norm{\eta_t}^2_{L^4(\partial\Omega)}+C\norm{\theta}^2_1\Big(\norm{w}^2_{L^4(\partial\Omega)}+\norm{w_t}^2_{L^4(\partial\Omega)}\Big),
	\end{align*}
	\begin{align*}
	-\frac{2}{3c_0}\int_{\partial\Omega}w_h^2\theta\theta_t\;d\Gamma&\leq -\frac{1}{3c_0}\frac{d}{dt}\Big(\int_{\partial\Omega}w_h^2\theta^2 \;d\Gamma\Big)+C\norm{\theta}^2_1\Big(\norm{w}^2_{L^4(\partial\Omega)}+\norm{w_{ht}}^2_{L^4(\partial\Omega)}\Big),
	\end{align*}
	and
	\begin{align*}
	\frac{2}{9c_0}\int_{\partial\Omega}3w_h\theta^2\theta_t\;d\Gamma&\leq \frac{2}{9c_0}\frac{d}{dt}\Big(\int_{\partial\Omega}w_h\theta^3\;d\Gamma\Big)+C\norm{\theta}^2_1+C\norm{\theta}^4_{L^4(\partial\Omega)}\norm{w_{ht}}^2_{L^4(\partial\Omega)}.
	\end{align*}
	For the third term $I_3(\theta_t)$ on the right hand side of \eqref{feqn3.5}, first we rewrite the sub terms as
	\begin{align*}
	(\eta(\nabla w\cdot{\bf{1}}),\theta_t)=\frac{d}{dt}\Big((\eta(\nabla w\cdot {\bf{1}}),\theta)\Big)-(\eta_t(\nabla w\cdot {\bf{1}}),\theta)-\Big(\eta(\nabla w\cdot {\bf{1}})_t,\theta\Big),
	\end{align*}
	and using integration by parts
	\begin{align*}
	(w_h(\nabla \eta\cdot{\bf{1}}),\theta_t)&=-\frac{d}{dt}\Big((\eta(\nabla\theta\cdot {\bf{1}}),w_h)+\big(\eta(\nabla w_h \cdot {\bf{1}}),\theta\big)-\sum_{i=1}^{2}\int_{\partial\Omega}\eta w_h\nu_i\theta\; d\Gamma\Big)\\
	&\qquad +\Big((\eta w_h)_t,\nabla\theta\cdot{\bf{1}}\Big)+\Big((\eta(\nabla w_h\cdot{\bf{1}}))_t,\theta\Big)-\sum_{i=1}^{2}\int_{\partial\Omega}(\eta w_h)_t\nu_i\theta \;d\Gamma,
	\end{align*}
and hence it follows that
	\begin{align*}
(\eta(\nabla w\cdot{\bf{1}}),\theta_t)&\leq\frac{d}{dt}\Big((\eta(\nabla w\cdot {\bf{1}}),\theta)\Big)+ C\norm{\eta_t}^2\Big(1+\norm{w}^2_2\Big)+C\norm{\theta}^2\Big(1+\norm{\Delta w}^2+\norm{\Delta_t}^2\Big)\\
&\qquad+C\norm{\nabla \theta}^2\Big(\norm{w}^2+\norm{w_t}^2\Big)+C\norm{\eta}^2\norm{\Delta w_t}^2.
	\end{align*}
	Similarly,
	\begin{align*}
\Big(w_h(\nabla\eta \cdot{\bf{1}}),\theta_t\Big)&\leq -\frac{d}{dt}\Big((\eta(\nabla\theta\cdot {\bf{1}}),w_h)+\big(\eta(\nabla w_h \cdot {\bf{1}}),\theta\big)-\sum_{i=1}^{2}\int_{\partial\Omega}\eta w_h\nu_i\theta\; d\Gamma\Big)\\
&\quad+ C\norm{\eta_t}^2\Big(1+\norm{w_h}^2+\norm{\Delta_hw_h}^2\Big)+C\norm{\eta}^2\Big(1+\norm{w_{ht}}^2+\norm{\Delta_h w_{ht}}^2\Big)\\
&\quad+C\norm{\theta}^2\Big(1+\norm{\Delta_hw_h}^2+\norm{\Delta_hw_{ht}}^2\Big)+C\norm{\nabla\theta}^2\\
&\quad+C\Big(\norm{\eta}^2_{H^{-1/2}(\partial\Omega)}+\norm{\eta_t}^2_{H^{-1/2}(\partial\Omega)}\Big).
	\end{align*}
	The other two sub-terms in the fourth term are bounded by
	\begin{align*}
	-\Big(\theta(\nabla w\cdot{\bf{1}}),\theta_t\Big)-\Big(w_h(\nabla\theta\cdot{\bf{1}}),\theta_t\Big)&\leq \frac{1}{6}\norm{\theta_t}^2+C\norm{\theta}^2\Big(\norm{w}^2+\norm{\Delta w}^2+(\norm{w_h}^2_1+\norm{\Delta_hw_h}^2)\Big).
	\end{align*}
For the fourth term $I_4(\theta_t)$ on the right hand side of \eqref{feqn3.5}, subterms can be rewritten as 
		\begin{align*}
	\Big(u^\infty&((\nabla \eta-\nabla\theta)\cdot {\bf{1}})+(\eta-\theta)(\nabla u^\infty \cdot {\bf{1}}),\theta_t\Big)\\
		&\leq\frac{d}{dt}\Big(-(\eta(\nabla\theta\cdot {\bf{1}}),u^\infty)-(\eta(\nabla u^\infty\cdot {\bf{1}}),\theta)+\sum_{i=1}^{2}u^\infty\eta n_i\theta\;d\Gamma\Big)+C\norm{\eta_t}\norm{\nabla u^\infty}_{L^4}\norm{\theta}_{L^4}\\
		&\quad+C\norm{u^\infty}_{L^\infty}\norm{\nabla\theta}\norm{\theta_t}+C\norm{\theta}_{L^4}\norm{\nabla u^\infty}_{L^4}\norm{\theta_t}\\
		&\quad+C\norm{u^\infty}_{L^\infty}\norm{\nabla \theta}\norm{\eta_t}+
		C\norm{\eta_t}_{H^{-1/2}(\partial\Omega)}\norm{u^\infty}_2\norm{\theta}_1,
		\end{align*}
		\begin{align*}
\Big((u^\infty-u^\infty_h)(\nabla w_h\cdot {\bf{1}}),\theta_t\Big)=\frac{d}{dt}\Big((u^\infty-u^\infty_h)(\nabla w_h\cdot {\bf{1}}),\theta\Big)-\Big((u^\infty-u^\infty_h)(\nabla w_{ht}\cdot {\bf{1}}),\theta\Big),
		\end{align*}
		\begin{align*}
\Big(w_h(\nabla(u^\infty-u^\infty_h)\cdot {\bf{1}}),\theta_t\Big)&=\frac{d}{dt}\Big((-(u^\infty-u^\infty_h)(\nabla w_h\cdot {\bf{1}}),\theta)-\big((u^\infty-u^\infty_h)(\nabla \theta\cdot {\bf{1}}),w_h\big)\\
&\quad+\sum_{i=1}^{2}\int_{\partial\Omega}w_h(u^\infty-u^\infty_h)n_i\theta d\;\Gamma\Big)+\Big(((u^\infty-u^\infty_h)(\nabla w_{ht}\cdot {\bf{1}}),\theta)\\&\qquad+(w_{ht}(\nabla \theta\cdot {\bf{1}}),(u^\infty-u^\infty_h))-\sum_{i=1}^{2}\int_{\partial\Omega}w_{ht}(u^\infty-u^\infty_h)n_i\theta\;d\Gamma\Big),
		\end{align*}
		and hence adding all the subterms
		\begin{align*}
		I_4(\theta_t)
		&\leq \frac{d}{dt}\Big(-(\eta(\nabla\theta\cdot {\bf{1}}),u^\infty)-(\eta(\nabla u^\infty\cdot {\bf{1}}),\theta)+\sum_{i=1}^{2}\int_{\partial\Omega}w_h(u^\infty-u^\infty_h)\nu_i\theta d\Gamma\\
		&\quad-(w_h(\nabla\theta\cdot {\bf{1}}),u^\infty-u^\infty_h)+\sum_{i=1}^{2}\int_{\partial\Omega}\eta\nu_i\theta d\Gamma\Big) +\frac{1}{6}\norm{\theta_t}^2+C\Big(\norm{\theta}^2+\norm{\nabla\theta}^2\Big)\\
		&\qquad+C\Big(\norm{u^\infty}^2+\norm{\Delta u^\infty}^2\Big)\norm{\eta_t}^2+C\norm{\eta_t}^2_{L^4(\partial\Omega)}+C\norm{u^\infty-u^\infty_h}^2\Big(\norm{w_{ht}}^2+\norm{\Delta_hw_{ht}}^2\Big)\\
		&\qquad+C\Big(\norm{\eta}^2_{H^{-1/2}(\partial\Omega)}+\norm{\eta_t}^2_{H^{-1/2}(\partial\Omega)}\Big)+C\norm{u^\infty-u^\infty_h}^2_{H^{-1/2}(\partial\Omega)}\norm{w_{ht}}^2_2.
		\end{align*}
Finally, from \eqref{feqn3.5}, we arrive at
	\begin{align*}
	\frac{d}{dt}\Big(\nu\norm{\nabla\theta}^2&+2\int_{\partial\Omega}(c_0+\nu+2|u^\infty|)\theta^2\;d\Gamma+\frac{1}{9c_0}\norm{\theta}^2_{L^4(\partial\Omega)}+\frac{2}{3c_0}\int_{\partial\Omega}w_h^2\theta^2\;d\Gamma\Big)+\norm{\theta_t}^2\\
	&\leq \frac{d}{dt}\Big(-\big(\eta (\nabla w_h\cdot{\bf{1}}),\theta \big)-\big(\eta (\nabla u^\infty\cdot{\bf{1}}),\theta \big)+\sum_{i=1}^{2}\int_{\partial\Omega}\eta w_h\nu_i\theta\;d\Gamma\\
	&\qquad+\int_{\partial\Omega}(c_0+\nu+2|u^\infty|)\eta\theta\;d\Gamma-\big(u^\infty(\nabla\theta\cdot{\bf{1}}),\eta\big)-\big(w_h(\nabla\theta\cdot{\bf{1}}),u^\infty-u^\infty_h\big)\\
	&\quad+\sum_{i=1}^{2}\int_{\partial\Omega}u^\infty\eta n_i\theta\;d\Gamma+\sum_{i=1}^{2}\int_{\partial\Omega}w_h(u^\infty-u^\infty_h)n_i\theta\;d\Gamma\Big)\\
	&\quad+C\norm{\eta_t}^2\Big(1+\norm{w}^2+\norm{\Delta w}^2+\norm{\Delta_hw_h}^2\Big)\\
	&\qquad+C\norm{\eta}^2\Big(1+\norm{w_{ht}}^2+\norm{\Delta_hw_{ht}}^2\Big)+C\norm{\theta}^2\Big(1+\norm{w}^2_2+(\norm{w_h}^2_1+\norm{\Delta_hw_h}^2)\\
	&\qquad+\norm{\Delta_hw_{ht}}^2\Big)+Ch^4\Big(1+\norm{w}^2_2+\norm{w_t}^2_2\Big)+Ch^2\norm{\theta}^2_1+C\norm{\eta}^2_{L^4(\partial\Omega)}{\eta_t}^2_{L^4(\partial\Omega)}.
	\end{align*}
	Multiply the above inequality by $e^{2\alpha t}$ and use Lemmas \ref{flm1}, \ref{flm4} and\ref{flm3.1}- \ref{flm3.3} to obtain
	\begin{align*}
	\frac{d}{dt}\Bigg(e^{2\alpha t}\Big(&\nu\norm{\nabla\theta}^2+2\int_{\partial\Omega}(c_0+\nu+2|u^\infty|)\theta^2\;d\Gamma+\frac{1}{9c_0}\norm{\theta}^2_{L^4(\partial\Omega)}+\frac{2}{3c_0}\int_{\partial\Omega}w_h^2\theta^2\;d\Gamma\Big)\Bigg)+e^{2\alpha t}\norm{\theta_t}^2\\
	&\leq \frac{d}{dt}\Bigg(e^{2\alpha t}\Big(-\big(\eta (\nabla w_h\cdot{\bf{1}}),\theta \big)+\sum_{i=1}^{2}\int_{\partial\Omega}\eta w_h\nu_i\theta\;d\Gamma+\int_{\partial\Omega}(c_0+\nu+2|u^\infty|)\eta\theta\;d\Gamma\\
	&\qquad-\big(u^\infty(\nabla\theta\cdot{\bf{1}}),\eta\big)-\big(\eta (\nabla u^\infty\cdot{\bf{1}}),\theta\big)-\big(w_h(\nabla\theta\cdot{\bf{1}}),u^\infty-u^\infty_h\big)\\
	&\qquad+\sum_{i=1}^{2}\int_{\partial\Omega}u^\infty\eta n_i\theta\;d\Gamma+\sum_{i=1}^{2}\int_{\partial\Omega}w_h(u^\infty-u^\infty_h)n_i\theta\;d\Gamma\Big)\Bigg)\\
	&\quad+Ch^4e^{2\alpha t}\Big(\norm{w}^2_2+\norm{w_t}^2_2+\norm{w_{ht}}^2_2+(\norm{w_h}^2_1+\norm{\Delta_hw_h}^2)\Big)+Ce^{2\alpha t}\norm{\theta}^4_{L^4(\partial\Omega)}\norm{w_{ht}}^2_1\\
	&\quad+Ce^{2\alpha t}\norm{\theta}^2\Big(1+\norm{w}^2_2+(\norm{w_h}^2_1+\norm{\Delta_hw_h}^2)+\norm{\Delta_hw_{ht}}^2\Big)+Ce^{2\alpha t}\norm{\nabla\theta}^2.
	\end{align*}
	Integrate the above inequality from $0$ to $t$. Then multiply the resulting inequality by $e^{-2\alpha t}$ and use Lemmas \ref{flm2}, \ref{flm4}- \ref{flm5} and \ref{flm3.2}-\ref{flm3.5} and Theorem \ref{fthm3.1} to arrive at
	\begin{align*}
	\Big(\nu\norm{\nabla\theta(t)}^2&+2\int_{\partial\Omega}(c_0+\nu+2|u^\infty|)\theta(t)^2\;d\Gamma+\frac{1}{9c_0}\norm{\theta(t)}^2_{L^4(\partial\Omega)}+\frac{2}{3c_0}\int_{\partial\Omega}w_h(t)^2\theta(t)^2\;d\Gamma\Big)\\
	&+e^{-2\alpha t}\int_{0}^{t}e^{2\alpha s}\norm{\theta_t(s)}^2\;ds\\
	&\leq \Big(-\big(\eta (\nabla w_h\cdot{\bf{1}}),\theta \big)+\sum_{i=1}^{2}\int_{\partial\Omega}\eta w_h\nu_i\theta\;d\Gamma+\int_{\partial\Omega}(c_0+\nu+2|u^\infty|)\eta\theta\;d\Gamma\\
	&\qquad-\big(u^\infty(\nabla\theta\cdot{\bf{1}}),\eta\big)-\big(w_h(\nabla\theta\cdot{\bf{1}}),u^\infty-u^\infty_h\big)+\sum_{i=1}^{2}\int_{\partial\Omega}u^\infty\eta n_i\theta\;d\Gamma\\
	&\qquad-\big(\eta(\nabla u^\infty\cdot{\bf{1}}),\theta\big)+\sum_{i=1}^{2}\int_{\partial\Omega}w_h(u^\infty-u^\infty_h)n_i\theta\;d\Gamma\Big)\\
	&\qquad+Ch^4e^{-2\alpha t}\Big(\norm{w_0}_3\Big)\exp\Big(C\norm{w_0}_2\Big)\\
	&\leq \frac{\nu}{2}\norm{\nabla \theta(t)}^2+Ch^4e^{-2\alpha t}\Big(\norm{w_0}_3\Big)\exp\Big(C\norm{w_0}_2\Big).
	\end{align*}
Finally, it follows that
	\begin{align*}
	\nu \norm{\nabla \theta(t)}^2+4\int_{\partial\Omega}(c_0+\nu+2|u^\infty|)\theta(t)^2\;d\Gamma+&\frac{2}{9c_0}\norm{\theta(t)}^2_{L^4(\partial\Omega)}+\frac{4}{3c_0}\int_{\partial\Omega}w_h^2\theta(t)^2\;d\Gamma\\
	&+e^{-2\alpha t}\int_{0}^{t}e^{2\alpha s}\norm{\theta_t(s)}^2\;ds\leq Ce^{C}h^4e^{-2\alpha t}.
	\end{align*}
	This completes the proof.
\end{proof}
\begin{theorem}\label{fthm3.3}
	Under the assumptions $
	\bf(A1)$ and $
	\bf(A2)$, there holds
	\begin{align*}
	\norm{w-w_h}_{L^\infty(H^i)}\leq Ce^{C}h^{2-i}e^{-\alpha t},\quad i=0,\hspace{0.1cm}1
	\end{align*}
	and
	\begin{align*}
	\norm{v_{0t}-v_{0ht}}_{L^\infty(L^2(\partial\Omega))}\leq Ce^{C}h^{3/2}e^{-\alpha t}.
	\end{align*}
\end{theorem}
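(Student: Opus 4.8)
The plan is to prove the two state estimates and the control estimate separately, in each case exploiting the error splitting $w-w_h=\eta-\theta$ together with the bounds already established for $\eta$ and $\theta$.

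\textbf{State estimates.} First I would write $\norm{(w-w_h)(t)}_i\le \norm{\eta(t)}_i+\norm{\theta(t)}_i$ for $i=0,1$. The $\theta$-part is already controlled: Theorem~\ref{fthm3.1} gives $\norm{\theta(t)}\le Ce^{C}h^2e^{-\alpha t}$ and Theorem~\ref{fthm3.2} gives $\nu\norm{\nabla\theta(t)}^2\le Ce^{C}h^4e^{-2\alpha t}$, so $\norm{\theta(t)}_i\le Ce^{C}h^2e^{-\alpha t}$ for both $i=0,1$ (i.e. $\theta$ is superconvergent). For the projection error I would invoke \eqref{feq3.2}, $\norm{\eta(t)}_i\le Ch^{2-i}\norm{w(t)}_2$, so the only missing ingredient is the decay of $\norm{w(t)}_2$. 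To get it I would read $\Delta w$ off \eqref{feq1.7}, $\nu\Delta w=w_t+u^\infty(\nabla w\cdot{\bf{1}})+w(\nabla u^\infty\cdot{\bf{1}})+w(\nabla w\cdot{\bf{1}})$, take $L^2$ norms, handle the quadratic term with Gagliardo--Nirenberg (absorbing the resulting $\norm{\Delta w}$ by Young), and use the fixed bounds on $u^\infty$ from Lemma~\ref{key}; since $\norm{w(t)}$, $\norm{\nabla w(t)}$ and $\norm{w_t(t)}$ all decay like $e^{-\alpha t}$ by Lemmas~\ref{flm1}, \ref{flm2} and \ref{flm4}, elliptic regularity yields $\norm{w(t)}_2\le Ce^{C}e^{-\alpha t}$. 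Hence $\norm{\eta(t)}_i\le Ce^{C}h^{2-i}e^{-\alpha t}$, and since $h^2\le h^{2-i}$ for small $h$, combining the two pieces gives $\norm{(w-w_h)(t)}_i\le Ce^{C}h^{2-i}e^{-\alpha t}$.

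\textbf{Control estimate.} For the feedback law I would differentiate \eqref{feqx1} in time (using that $u^\infty,u^\infty_h$ are $t$-independent) to obtain, on $\partial\Omega$, $v_{0t}-v_{0ht}=-\frac{1}{\nu}\big[(c_0+\nu)(w_t-w_{ht})+2|u^\infty|w_t-2|u^\infty_h|w_{ht}+\frac{2}{3c_0}(w^2w_t-w_h^2w_{ht})\big]$, and estimate each group in $L^2(\partial\Omega)$. Writing $w_t-w_{ht}=\eta_t-\theta_t$, the leading linear term is bounded by $\norm{\eta_t}_{L^2(\partial\Omega)}+\norm{\theta_t}_{L^2(\partial\Omega)}$, and the $\eta_t$-analogue of Lemma~\ref{flm3.6}, $\norm{\eta_t}_{L^2(\partial\Omega)}\le Ch^{3/2}\norm{w_t}_2$, is what fixes the rate $h^{3/2}$. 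The term $2|u^\infty|w_t-2|u^\infty_h|w_{ht}$ I would split as $2|u^\infty|(w_t-w_{ht})+2(|u^\infty|-|u^\infty_h|)w_{ht}$ and bound the second piece by $\norm{u^\infty-u^\infty_h}_{L^2(\partial\Omega)}\,\norm{w_{ht}}_{L^\infty(\partial\Omega)}$ using the $L^2(\partial\Omega)$-analogue of Lemma~\ref{x2} ($\le Ch^{3/2}$). For the cubic term $w^2w_t-w_h^2w_{ht}$ the point is to factor it through $w_t-w_{ht}$ and $w-w_h$ and to pull the bounded factors out in $L^\infty(\partial\Omega)$ (so only $L^2(\partial\Omega)$ errors remain), keeping everything at order $h^{3/2}$; the weight $e^{-\alpha t}$ comes from the decay already proved for $w,w_h,w_t,w_{ht}$.

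\textbf{Main obstacle.} The genuine difficulty is $\norm{\theta_t(t)}_{L^2(\partial\Omega)}$, which must be of order $h^{3/2}$ \emph{uniformly in $t$}: the bounds available so far control $\theta_t$ only through the time-integrated, interior quantity $e^{-2\alpha t}\int_0^t e^{2\alpha s}\norm{\theta_t(s)}^2\,ds$ of Theorem~\ref{fthm3.2}, which is $L^2$-in-time. To close the argument I would differentiate the error equation \eqref{feqn3.5} in time and run an energy estimate in the spirit of Theorem~\ref{fthm3.1} (testing with $\theta_t$, and using the discrete stabilization Lemmas~\ref{flm3.1}--\ref{flm3.5}) to obtain a pointwise-in-time superconvergent bound $\norm{\theta_t(t)}\le Ce^{C}h^{2}e^{-\alpha t}$; an inverse inequality on the boundary, $\norm{\theta_t}_{L^2(\partial\Omega)}\le Ch^{-1/2}\norm{\theta_t}$, then yields exactly the required $h^{3/2}$ (alternatively, a pointwise $H^1$ bound for $\theta_t$ combined with the trace theorem \eqref{1.30} would serve). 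Establishing this pointwise $\theta_t$-estimate, and verifying that its rate does not degrade the overall $h^{3/2}$, is where the main work lies.
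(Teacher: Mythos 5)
Your argument for the two state estimates is essentially the paper's: triangle inequality on $w-w_h=\eta-\theta$, Theorems \ref{fthm3.1}--\ref{fthm3.2} for $\theta$, and \eqref{feq3.2} for $\eta$. The paper states this in one line; you usefully supply the missing ingredient, namely the pointwise decay $\norm{w(t)}_2\leq Ce^{C}e^{-\alpha t}$ obtained by reading $\nu\Delta w$ off \eqref{feq1.7} and combining Lemmas \ref{flm1}, \ref{flm2}, \ref{flm4} with Gagliardo--Nirenberg and the bounds on $u^\infty$. That part is correct and complete.

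The control estimate is where you go astray, and the root cause is a misreading of the notation: $v_{0t}$ and $v_{0ht}$ in the theorem are not time derivatives but the continuous and discrete feedback laws evaluated at time $t$ (compare the definition $v_{0h}(t)$ in \eqref{feq4.2}, and note that the paper's identity for $v_{0t}-v_{0ht}$ contains no $w_t$ or $w_{ht}$). The paper simply writes
\begin{equation*}
v_{0t}-v_{0ht}=-\frac{1}{\nu}\Big((c_0+\nu+2|u^\infty|)(\eta-\theta)+\frac{2}{9c_0}(\eta-\theta)(w^2+ww_h+w_h^2)\Big),
\end{equation*}
using $w^3-w_h^3=(w-w_h)(w^2+ww_h+w_h^2)$, and then takes $L^2(\partial\Omega)$ norms: the rate $h^{3/2}$ comes from $\norm{\eta}_{L^2(\partial\Omega)}\leq Ch^{3/2}\norm{w}_2$ in Lemma \ref{flm3.6}, while $\norm{\theta}_{L^2(\partial\Omega)}\leq Ce^{C}h^2e^{-\alpha t}$ is already contained in the boundary term of Theorem \ref{fthm3.2}, and the factors $w^2+ww_h+w_h^2$ are controlled in $L^4(\partial\Omega)$ by Lemmas \ref{flm2} and \ref{flm3.2}. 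No estimate of $\theta_t$ on the boundary is needed. By differentiating the feedback law in time you have set yourself the much harder task of bounding $\norm{\theta_t(t)}_{L^2(\partial\Omega)}$ pointwise in time at order $h^{3/2}$, and you correctly observe that the lemmas established so far do not give this (Theorem \ref{fthm3.2} only controls $\theta_t$ in a time-integrated interior norm); your proposed fix (a second energy argument for the time-differentiated error equation plus an inverse or trace inequality) is only a sketch and would require substantial new work, including an estimate of $\theta_t(0)$ and the handling of $w_{tt}$-type terms. So as written, the second half of your proof does not close; under the correct reading of the statement it follows in a few lines from results you already have.
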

\begin{proof}
From estimates \eqref{feq3.2} and Theorems \ref{fthm3.1}-\ref{fthm3.2} with a use of triangle inequality completes the first part of the proof.
	
	For the second part, we note that
	\begin{align*}
	v_{0t}-v_{0ht}=-\frac{1}{\nu}\Big((c_0+\nu+2|u^\infty|)(\eta-\theta)+\frac{2}{9c_0}(\eta-\theta)(w^2+ww_h+w_h^2)\Big).
	\end{align*}
Therefore,
	\begin{align*}
	\norm{v_{0t}-v_{0ht}}_{L^\infty(L^2(\partial\Omega))}\leq C\Big(\norm{\eta}_{L^\infty(L^2(\partial\Omega))}+\norm{\theta}_{L^\infty(L^2(\partial\Omega))}\Big)\Big(1+\norm{w}^2_{L^\infty(L^4(\partial\Omega))}+\norm{w_h}^2_{L^\infty(L^4(\partial\Omega))}\Big).
	\end{align*}
	A use of Lemmas \ref{flm2}, \ref{flm3.2} and \ref{flm3.6} and Theorem \ref{fthm3.2} completes the proof.
\end{proof}
\section{Numerical experiments}
Now in this section, our aim is to conduct some numerical experiments to show the order of convergence  for the state variable and for the feedback control law. In addition, stabilization result is also shown numerically.
For complete discrete scheme
the time variable is discretized by replacing the time derivative with difference quotient. 
Based on backward Euler method, we now discretize the semidiscrete solution.
Let $W^n$ be an approximation of $w(t)$ in $V_h$ at $t=t_n=nk$ where
$0<k<1$ denote the time step size and  
$t_n=nk,$ $n$ is nonnegative integer. For smooth function $\phi$ defined on $[0,\infty),$
set $\phi ^n=\phi(t_n)$ and $\bar{\partial}_t\phi^n=\frac{(\phi^n-\phi^{n-1})}{k}$.\\
 Now the backward Euler method applied to \eqref{feq1.7} determines a sequence of functions $\{{W^n}\}_{n\geq 1}\in V_h$ as a solution of
\begin{align}
(\bar{\partial}_tW^n,\varphi_h)&+\nu(\nabla W^n,\nabla\varphi_{h})+\Big(u^\infty(\nabla W^n\cdot{\bf {1}})+W^n(\nabla u^\infty\cdot{\bf {1}}),\varphi_h\Big)+\big(W^n(\nabla W^n\cdot{\bf {1}}),\varphi_h)\notag\\
&+\Big<(c_0+\nu+2|u^\infty|)W^n+\frac{2}{9c_0}(W^n)^3),\varphi_h\Big>=0  \quad \forall~ \varphi_h \in V_h\label{feqn5.1} 
\end{align}
with $W^0=w_{0h}$.
To compute this, we use Freefem++ which provides an interpolation operator {\it convect} to calculate the convective term and for final plots, Matlab has been used. 
\begin{example}
In this example, we consider $w_0=\sin(\pi x_1)\times\sin(\pi x_2)-(-0.2 x_1)$ , where $u^\infty=-0.2x_1$ is the steady state solution with $\nu=0.1$ and $c_0=1$ in $\Omega=[0,1]\times [0,1]$  with time step $k=0.0005$.
We take zero Neumann boundary conditions which is without control and denoted
it as "Uncontrolled solution" in Figure \ref{fig:1.1}. For controlled solution, we choose the control \eqref{feqx1} and denoted it as "controlled solution with cubic nonlinear law" in Figure \ref{fig:1.1}.
\end{example}
\begin{figure}[ht!]
	\begin{minipage}[b]{.5\linewidth}
		\centering
		
		\includegraphics[height=6cm]{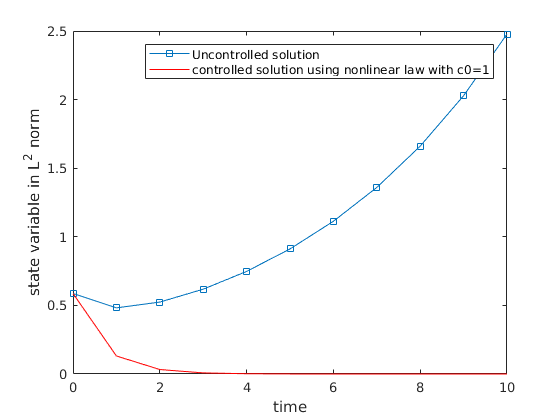}
		\caption{Both uncontrolled and controlled solution in $L^2(\Omega)$ norm}
		\label{fig:1.1}
	\end{minipage}
	\hspace{0.05cm} 
	\begin{minipage}[b]{0.5\linewidth}
		\centering
		\includegraphics[height=6cm]{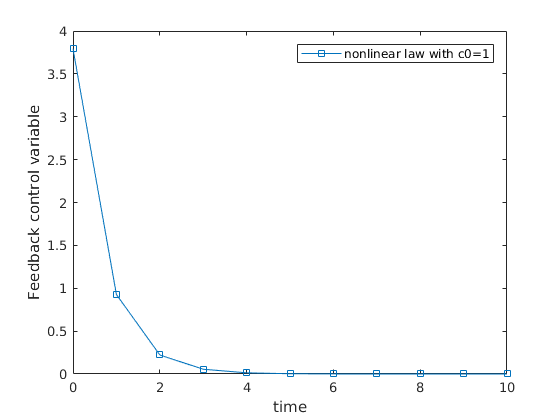}
		\caption{feedback control law in $L^2(\partial\Omega)$ norm}
		\label{fig:1.2}
	\end{minipage}
\end{figure}	
\begin{table}[ht]
	\centering
	\caption{Errors and convergence rate for the state variable $w$ with $c_0=1$ and $t=t_n=1$}
	\begin{tabular}{c c c c c}
		\hline
		$h$ & $\norm{w(t_n)-W^n}$ & Conv. Rate & $\norm{w(t_n)-W^n}_1$ & Conv. Rate
		\\  \hline \hline
		$\frac{1}{4}$ & $0.0214813$  &          & $0.153906$ & \\ \hline
		$\frac{1}{8}$ & $0.0059996$ & 1.8401 & $0.0777889$ & 0.9844 \\ \hline
		$\frac{1}{16}$ & $0.00157007$ & 1.934 & $0.0383679$ & 1.019 \\ \hline
		$\frac{1}{32}$ & $0.00041675$ & 1.9135 & $0.0185611$ & 1.047 \\ \hline
		\label{table:6.1}
	\end{tabular}
\end{table}
\begin{table}[ht]
	\centering
	\caption{Errors and convergence rate for the feedback control variable $v$ when $c_0=1$ and $t_n=1$}
	\begin{tabular}{c c c c c}
		\hline
		$h$ & $|{v(t_n)-v_{h}(t_n)}|$ & Conv. Rate 
		\\  \hline \hline
		$\frac{1}{4}$ & $0.13599$ &           \\ \hline
		$\frac{1}{8}$ & $0.0412476$ & 1.7211 \\ \hline
		$\frac{1}{16}$ & $0.0115142$ & 1.8409   \\ \hline
		$\frac{1}{32}$ & $0.00309629$ & 1.8595  \\ \hline
		\label{table:6.2}
	\end{tabular}
\end{table}
 In Figure  \ref{fig:1.1}, it is observable that with control \eqref{feqx1}, the solution for the problem \eqref{feq1.10} in $L^2$ norm goes to zero exponentially. From Table \ref{table:6.1}, it follows that $L^2$ and $H^1$ orders of convergence
 for state variable are $2$ and $1$, respectively, which confirms our theoretical results, in Theorem \ref{fthm3.3}. Take very refined mesh solution as exact solution and derive the order of convergence.
 In Table \ref{table:6.2}, it is noted that the order of convergence of nonlinear Neumann feedback control law \eqref{feqx1} is $2$, which verify our theoretical result in Theorem \ref{fthm3.3}.
 \begin{example}
 In this example, we take the initial condition $w_0=sin(\pi x_1)sin(\pi x_2)-0.2x_1-(-0.2x_1)=sin(\pi x_1)sin(\pi x_2)$ with $u^\infty=-0.2$ as the steady state solution in the domain $\Omega=[0,1]\times[0,1]$ with $\nu=0.1$ and time step $k=0.0005$. We choose the uncontrolled solution as the solution of \eqref{feq1.7} with some part on the boundary zero Dirichlet condition namely on $\Gamma_1=\{1\}\times [0,1]$ and on the remaining part ($\Gamma-\Gamma_1$) zero Neumann boundary condition and denoted it as "Uncontrolled solution " in Figure \ref{fig:1.3}. For the controlled solution,  we take the solution of \eqref{feq1.7} with feedback control law \eqref{feqx1}  on the remaining Neumann boundary part $\Gamma-\Gamma_1$ with $c_0=1$ and denoted it as "controlled solution using nonlinear law with $c0=1$"
  in Figure \ref{fig:1.3}.
\end{example}
\begin{figure}[ht!]
	\begin{minipage}[b]{.5\linewidth}
		\centering
		
		\includegraphics[height=6cm]{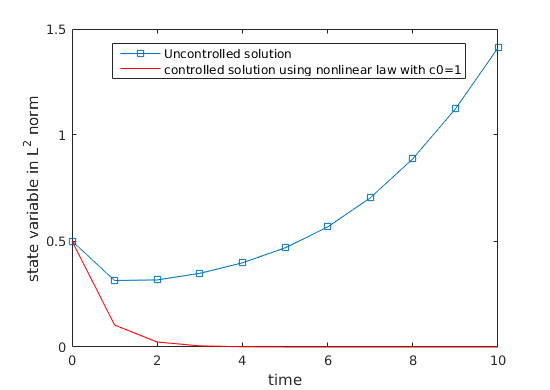}
		\caption{Both uncontrolled and controlled solution with two cases in $L^2(\Omega)$ norm}
		\label{fig:1.3}
	\end{minipage}
	\hspace{0.05cm} 
	\begin{minipage}[b]{0.5\linewidth}
		\centering
		\includegraphics[height=6cm]{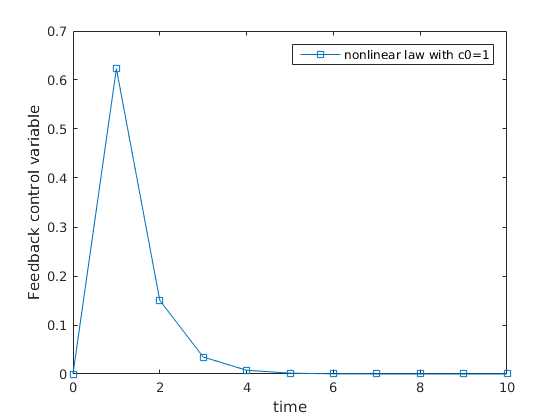}
		\caption{Feedback control law in $L^2(\partial\Omega)$ norm}
		\label{fig:1.4}
	\end{minipage}
\end{figure}	
From Figure \ref{fig:1.3}, it is documented that as time increases the uncontrolled solution does not go to zero. But with feedback control law \eqref{feqx1}, the solution of \eqref{feq1.7} goes
to zero.
Corresponding feedback control law settle down at zero as time increases which is documented in Figure \ref{fig:1.4}.
\section*{Acknowledgments}
The first author was supported by the ERC advanced grant 668998
(OCLOC) under the EU's H2020 research program. Jean-Pierre Raymond is gratefully acknowledged for his constructive remarks and suggestions which complete the paper during first author's visit to him in Universitat paul Sabatier, Toulouse.
\bibliographystyle{amsplain}
 
\end{document}